\documentclass{article}
\usepackage{graphicx} 
\usepackage{color}
\usepackage{amsfonts,amsmath,amssymb,amsthm}
\usepackage{verbatim,enumerate}
\usepackage{tikz}

\newtheorem{theorem}{Theorem}
\newtheorem{definition}[theorem]{Definition}
\newtheorem{lemma}[theorem]{Lemma}
\newtheorem{proposition}[theorem]{Proposition}

\newtheorem{corollary}[theorem]{Corollary}
\newtheorem{claim}{Claim}[theorem]

\newtheorem{question}[theorem]{Question}

\newcommand{\SIGMA}{\boldsymbol{\Sigma}}
\newcommand{\PI}{\boldsymbol{\Pi}}

\newcommand{\ww}{\omega^\omega}
\newcommand{\dw}{2^\omega}
\newcommand{\FP}{{\mathsf{FP}}}
\newcommand{\PSP}{{\mathsf{PSP}}}

\title{Functional Projection and Complexity Reduction}
\author{Vera Fischer\thanks{University of Vienna; vera.fischer@univie.ac.at}, Yurii Khomskii\thanks{University of Amsterdam and Hamburg University;  y.d.khomskii@uva.nl}, Yipu Li\thanks{
University of Vienna;  yipu.li@univie.ac.at}, Julia Millhouse\thanks{E\"otv\"os Lor\'and University; julia.marie.millhouse@univie.ac.at}}

\begin{document}

\maketitle

\begin{abstract}
    In \cite{Tornquist13}, T\"ornquist proved that if there exists a $\SIGMA^1_2$ maximal almost disjoint (MAD) family, then there exists a $\PI^1_1$ MAD family. Since then, several other ``complexity reduction’’ proofs have been found for other combinatorial objects, such as maximal independent families or ultrafilters. All of these proofs have relied on $\PI^1_1$-uniformization, so they did not easily lift to higher projective levels.

In this paper, we introduce a principle weaker than Uniformization which we call ``Functional Projection’’, and show that this principle is sufficient for most complexity reduction proofs. On the other hand, we prove that in $L[U]$, the canonical inner model for a measurable cardinal, Functional Projection holds for $\PI^1_2$ relations. As an immediate corollary, we obtain that in $L[U]$ there are $\PI^1_2$ witnesses for many combinatorial families.
\end{abstract}


\section{Introduction}
Maximal combinatorial sets of reals, such as \emph{maximal almost disjoint} (MAD) families, \emph{maximal independent families} (MIF) or ultrafilters, have many applications in set theory and topology, and have been studied by mathematicians for a long time from different perspectives. Descriptive set theory investigates the existence of witnesses for such combinatorial families in the projective hierarchy.

For example, by Mathias’ well-known result \cite{MATHIAS79}, there are no analytic MAD families. On the other hand, in G\"odel's Constructible Universe $L$ it is easy to build a $\SIGMA^1_2$ witness to a MAD family, by  recursion  along the $\SIGMA^1_2$-good well-order of the reals. In \cite{MILLER1989}, Miller developed a sophisticated coding technique to produce a $\PI^1_1$ MAD family in $L$. However, it turned out that this technique can be circumvented when T\"ornquist \cite{Tornquist13} provided a direct combinatorial proof in ZFC of the statement:

\begin{center}\emph{If there is a $\SIGMA^1_2$ MAD family then there is a $\PI^1_1$ MAD family.} \end{center}

Inspired by this result, various researchers  found similar proofs for other combinatorial families, specifically for maximal independent families \cite{Brendle19}, Towers \cite{Fischer22}, Ultrafilter Bases \cite{Schilhan22}, Maximal Ideal Independent Families \cite{Millhouse26} and Hausdorff Gaps \cite{Millhouse25} (the same statement for counterexamples to the Perfect Set Property was already well-known, see for instance the proof of Corollary 25.37 in Jech \cite{Jech}). We call results of this type \emph{complexity reduction} theorems. A detailed survey that includes almost all currently known complexity reductions is \cite[Chapter 2]{Millhouse25}.

\bigskip 
A central ingredient in all such complexity reductions is  \emph{K\^{o}ndo’s $\PI^1_1$} \emph{Uniformization} \emph{Theorem}, and as such, they work only on the $\SIGMA^1_2$ and $\PI^1_1$ levels of the projective hierarchy. Several mathematicians (private communication) have asked whether this could be generalized to the $\SIGMA^1_{n+1}$ and $\PI^1_{n}$ levels for larger $n$, for example by eliminating Uniformization from the proofs.

\bigskip While this general question remains open, we do make significant progress by the discovery that in most   proofs, \emph{Uniformization} can be replaced by a weaker principle that we call \emph{Functional Projection}. Rather than requiring the uniformizing function to be a subset of the original relation, we require only that there exists a function of the same complexity which projects to the same set as the relation does. This weakened principle turns out to be sufficient for many complexity reduction proofs. Thus we are able to lift many complexity reduction proofs to higher pointclasses. At the same time,  \emph{Functional Projection} is genuinely weaker than \emph{Uniformization}. 

It turns out Kanovei and Lyubetsky recently studied \emph{Functional Projection} in  \cite{Kanovei24} and \cite{Kanovei25} (where it is called \emph{uniform projection}). Among other results they proved the following:

\begin{theorem}[Kanovei and Lyubetsky]
    If $V = L$, then $\boldsymbol{\Pi}^1_n$-Functional Projection  holds for all $n\geq 1$.
\end{theorem}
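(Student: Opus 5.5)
The plan is to handle $n=1$ by K\^ondo's $\PI^1_1$ Uniformization, which directly gives a $\PI^1_1$ uniformizing function and hence a functional projection. For $n\ge 2$ I will use the $\SIGMA^1_2$-good well-order $<_L$ of the reals in $L$. Uniformizing by "the $<_L$-least $y$ with $R(x,y)$" is not literally $\PI^1_n$, because the leastness clause brings in $\neg R$, which is $\SIGMA^1_n$. So instead of outputting only $y$, the function will output $y$ together with enough auxiliary data (codes and witnesses) to certify leastness in a $\PI^1_n$ way. Such data is chosen canonically from $y$, so the graph is still a function, and its projection to the first coordinate equals $\mathrm{proj}(R)$.

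Concretely, fix a $\PI^1_n$ relation $R\subseteq X\times Y$ and write $\neg R(x,y) \iff \exists z\, P(x,y,z)$ with $P\in\PI^1_{n-1}$. For $x\in\mathrm{proj}(R)$ let $y_x$ be the $<_L$-least $y$ with $R(x,y)$. Let $\gamma$ be the least countable ordinal such that $L_\gamma$ contains $y_x$ and, for every $y'<_L y_x$, also the $<_L$-least $z$ with $P(x,y',z)$. Such witnesses exist and, by the condensation argument, lie in some countable $L_\gamma$. Let $e$ be the canonical (say $<_L$-least) real coding $L_\gamma$ together with an enumeration $(y_k)_k$ of its reals. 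I then define
\[
F(x,w) \iff w=\langle y,e\rangle,\ e \text{ is the canonical code of some } L_\gamma,\ y\in L_\gamma,\ R(x,y),\ \text{and } \Phi(x,y,e),
\]
where $\Phi$ says three things: for every $k$ with $y_k<_L y$ there is $j$ with $P(x,y_k,y_j)$ and $\forall z'<_L y_j\,\neg P(x,y_k,z')$; the chosen witnesses are $<_L$-least; and $\gamma$ is minimal with these properties.

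The complexity check runs in this order.
\begin{enumerate}
\item "$e$ codes a well-founded model of $V=L$" is $\PI^1_1$, and canonicity of the code is $\Delta^1_2$ in $L$.
\item $R$ and $P$ are $\PI^1_n$.
\item Quantifiers over $k,j\in\omega$ are number quantifiers and preserve $\PI^1_n$.
\item The bounded quantifiers $\forall z'<_L y_j$ range over the countable set of reals constructed before $y_j$. That set is enumerated recursively in a code of an $L$-level, which can be absorbed into $e$ by coding a slightly larger $L_\gamma$. Hence $\forall z'<_L y_j\,\neg P$ is a countable intersection of $\SIGMA^1_{n-1}$ sets, which is $\SIGMA^1_{n-1}\subseteq \Delta^1_n$ for $n\ge 2$.
\end{enumerate}
So $F$ is $\PI^1_n$. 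Functionality holds because every component of $w$ is determined canonically from $x$. The projection satisfies $\mathrm{proj}(F)=\mathrm{proj}(R)$ because $F(x,w)$ implies $R(x,y)$, and conversely every $x\in\mathrm{proj}(R)$ gets $w=\langle y_x,e\rangle$.

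The main obstacle is making the "leastness" and "canonical code" clauses genuinely of the claimed complexity. In particular:
\begin{itemize}
\item minimality of $\gamma$ and the $<_L$-least choice of $e$ must be expressed by quantification over reals coded inside $e$ (or a larger coded level), not over all reals;
\item $P$ must be evaluated in $V$ and not in the countable model.
\end{itemize}
I would first try to simplify by dropping the minimality of $\gamma$ altogether and taking $e$ to be the $<_L$-least code of any adequate level. Its $<_L$-leastness is again a bounded quantifier over reals constructed earlier, which are enumerable from $e$ itself once the coded level is chosen large enough to contain its own earlier codes.
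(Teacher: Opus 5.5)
Your proof is correct, and it takes a genuinely different, more uniform route than the paper.

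\textbf{The paper's route.} It treats $n\ge 3$ and $n=2$ separately.
\begin{itemize}
\item For $n\ge 3$ it outputs $\langle w,u\rangle$, where $u$ is the $<_L$-least real bundling all counterwitnesses. The leastness of $u$ is a clause $\forall u'<_L u\,\exists w'<_L w\,\forall i\,P(x,w',(u')_i)$ with $P\in\Sigma^1_{m-1}$. It stays $\Sigma^1_{m-1}$ only by $\Sigma^1_2$-goodness, which is why that argument needs $m\ge 3$.
\item For $n=2$ it outputs $\langle w,u,c\rangle$. Here $c$ is an $\omega$-sequence of codes of levels $L_{\alpha_k}$, each level certifying internally that the previous code is $<_L$-least. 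The minimality clauses for $w,u$ are evaluated inside $M_{(c)_0}$ via Mostowski absoluteness of the $\Sigma^1_1$ matrix.
\end{itemize}

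\textbf{Your route.} You choose the counterwitnesses pointwise $<_L$-least, put them into the least suitable $L_\gamma$, and output the $<_L$-least code $e$ of that level. Since $L_\gamma$ is a $<_L$-initial segment that computes $<_L$ correctly, every leastness assertion (for $y$, for each counterwitness, for $\gamma$) becomes a number quantification over the reals and ordinals of $M_e$. These quantifications apply to your $P\in\Pi^1_{n-1}$ and to $\neg P$, evaluated in $V$, so they are $\Delta^1_n$. Apart from $R(x,y)$ and well-foundedness of $E_e$, the only clause quantifying over all reals is canonicity of $e$. That clause is $\Pi^1_2$ simply because $<_L$ is $\Delta^1_2$ and non-isomorphism is $\Pi^1_1$.

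\textbf{What each buys.} Your version handles all $n\ge 2$ at once, with no goodness bookkeeping and no iterated code sequence. What it gives up is portability. In $L[U]$, where $<_{L[U]}$ is only $\Delta^1_3$, your canonicity clause would only be $\Pi^1_3$. The paper's $n=2$ construction is set up so that no clause quantifies over $<$-predecessors in $V$. That is exactly what lets it transfer to iterable premice and $<_{ip}$.

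\textbf{One correction to your closing remarks.} The $<_L$-leastness of $e$ cannot be verified using reals coded inside $e$. A code of $L_\gamma$ never lies in $L_\gamma$, so the reals $<_L e$ are in general not all in $M_e$. Enlarging the level to contain ``its own earlier codes'' is an infinite regress, and that regress is precisely what the paper's sequence $c$ resolves.

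You do not need any of this. The direct count in your item 1 is the right justification, and the same count shows the ``least adequate code'' variant is $\Pi^1_n$ as well. Likewise, no enlargement is needed in item 4: $y_j\in L_\gamma$ already puts all $<_L$-predecessors of $y_j$ into $L_\gamma$.
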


\begin{theorem}[Kanovei and Lyubetsky]
    Assuming ${\rm Con}({\sf ZFC})$, it is consistent that $\boldsymbol{\Pi}^1_n$-Functional Projection holds for all $n\geq 2$, and it is also consistent that $\boldsymbol{\Pi}^1_n$-Functional Projection  fails for all $n\geq 2$.
\end{theorem}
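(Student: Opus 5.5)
For the first assertion, where $\boldsymbol{\Pi}^1_n$-Functional Projection holds for all $n\geq 2$, I would simply take $L$. If ${\sf ZFC}$ is consistent then so is ${\sf ZFC}+V=L$, and the previous theorem of Kanovei and Lyubetsky gives $\boldsymbol{\Pi}^1_n$-Functional Projection in $L$ for every $n\geq 1$, in particular for every $n\geq 2$. So the real work is the failure half.

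For the failure, the key observation concerns what Functional Projection gives for a relation with full projection. Suppose $R\subseteq \ww\times\ww$ is $\boldsymbol{\Pi}^1_n$ and its projection is all of $\ww$. Functional Projection then yields a $\boldsymbol{\Pi}^1_n$ set $F$, say with real parameter $a$, which is the graph of a total function $f$. Hence for every $x$ the real $f(x)$ is the unique $y$ with $(x,y)\in F$, so it is definable from $x$ and $a$. I also need $f$ to respect $R$ in the sense that $(x,f(x))\in R$. If the definition of Functional Projection only requires equality of projections, I would replace $R$ by a relation in which each $x$ encodes the parameter, so that the argument below goes through; the clean case is the version where $F\subseteq R$ on the relevant coordinate. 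The aim is then a model in which some $\boldsymbol{\Pi}^1_2$ relation with full projection has no value definable from the relevant parameter.

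The model I would use is $L[G]$, where $G$ adds $\omega_1$ Cohen reals with finite support over $L$. Take
\[ R(x,y) \iff y\notin L[x]. \]
Since "$y\in L[x]$" is $\Sigma^1_2$, the relation $R$ is $\Pi^1_2$, hence $\boldsymbol{\Pi}^1_n$ for every $n\geq 2$. Every real of $L[G]$ lies in $L[G\restriction\alpha]$ for some countable $\alpha$, and a later Cohen real is not in that model. So $\mathrm{proj}(R)$ is all of $\ww$. Now suppose $f$ is as above with parameter $a$, and evaluate at $x=a$. Then $f(a)\notin L[a]$, yet $f(a)$ is definable in $L[G]$ from $a$ by a projective formula.

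The main step, and the expected obstacle, is the homogeneity argument showing that every real definable in $L[G]$ from $a$ (and ordinals) already belongs to $L[a]$. I would proceed as follows. Let $a\in L[G\restriction A]$ for a countable set $A$. Then $L[G]$ is a Cohen extension of $L[G\restriction A]$ by the remaining coordinates. That forcing is weakly homogeneous, so every real definable from $a$ lies in $L[G\restriction A]$. Next, the model $L[G\restriction A]$ is itself a Cohen extension of $L[a]$, because the quotient of Cohen forcing by an intermediate real is again Cohen-equivalent or at least homogeneous enough. One must check carefully that this quotient is homogeneous, or else argue directly using automorphisms of the full product that fix a name for $a$. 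Granting this, $f(a)\in L[a]$, a contradiction. So $\boldsymbol{\Pi}^1_n$-Functional Projection fails in $L[G]$ for all $n\geq 2$ simultaneously, and the consistency follows from ${\rm Con}({\sf ZFC})$.
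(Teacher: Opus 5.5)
Your first half is fine. $\mathrm{Con}(\mathsf{ZFC})$ gives $\mathrm{Con}(\mathsf{ZFC}+V=L)$, and in $L$ Functional Projection holds at every level (Theorem~\ref{thm: FP-in-L}). The paper does not prove the failure half itself; it only cites Kanovei and Lyubetsky. So that half has to stand on its own, and it breaks down at the step you flagged.

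\textbf{The main gap.} You use $(x,f(x))\in R$, i.e.\ $F\subseteq R$, and that is Uniformization, not Functional Projection. $\FP$ only requires $\mathrm{dom}(F)$ to equal the projection of $R$. Your relation $R(x,y)\iff y\notin L[x]$ projects onto all of $\ww$, so the constant function $\ww\times\{0\}$, whose graph is closed, already witnesses $\FP$ for it. No relation with full projection can refute $\FP$. Letting $x$ encode the parameter does not help either, because the values of $F$ are unconstrained: homogeneity tells you $F(x)\in L[x,a]$, but nothing stops you from choosing the values there. To refute $\boldsymbol{\Pi}^1_n$-$\FP$ you must exhibit a $\boldsymbol{\Sigma}^1_{n+1}$ set that is not the domain of any partial function with $\boldsymbol{\Pi}^1_n$ graph. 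That means bounding the complexity of such domains, as the paper does under determinacy via Lemma~\ref{lem: norm}. As written, and granting the homogeneity step, your argument only shows that this particular $R$ has no projective uniformization. For $n=2$, failure of $\boldsymbol{\Pi}^1_2$-Uniformization is already a $\mathsf{ZFC}$ theorem.

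\textbf{The model also looks wrong.} The same homogeneity gives the following for a $\boldsymbol{\Sigma}^1_{n+1}(a)$ set $D=\{x:\exists w\,R(x,w)\}$: $x\in D$ iff $L[x,a]\models$ ``$\Vdash_{\mathbb{C}_{\omega_1}}\exists w\,R(\check x,w)$''. Equivalently, some Cohen condition $p$ and name $\tau$ in $L[x,a]$ (coded by a real) satisfy $p\Vdash R(\check x,\tau)$ there. The Cohen forcing relation for $\Pi^1_n$ formulas is again $\Pi^1_n$; the base case is that forcing a $\Sigma^1_1$ statement amounts to comeagerness, which is $\Sigma^1_1$. Evaluating it inside $L[x,a]$ stays $\boldsymbol{\Pi}^1_n$ in $L[G]$: by Shoenfield for $n=2$, and for $n\ge 3$ by relativizing real quantifiers to the $\Sigma^1_2$ set $\mathbb{R}\cap L[x,a]$. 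Hence $D=\{x:\exists w'\in L[x,a]\,R'(x,w')\}$ with $R'\in\boldsymbol{\Pi}^1_n$. Now run the paper's constructions inside $L[x,a]$, using its $\Sigma^1_2$-good well-order: the proof of Theorem~\ref{thm: FP-in-L} for $n=2$, and Theorem~\ref{thm: FP-in-model} for $n\ge 3$. This yields a $\boldsymbol{\Pi}^1_n$ function with domain $D$, so $\FP$ appears to hold at every level in $L[G]$.

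A homogeneity argument does refute $\FP$ when $\mathbb{R}\cap L[x,a]$ is always countable, as in the L\'evy--Solovay model. There $x\notin\mathrm{dom}(F)$ iff some enumeration $z$ of $\mathbb{R}\cap L[x,a]$ has $\neg F(x,(z)_k)$ for all $k$, which makes domains of $\boldsymbol{\Pi}^1_n$ functions $\boldsymbol{\Delta}^1_{n+1}$. But that model needs an inaccessible. Getting the result from $\mathrm{Con}(\mathsf{ZFC})$ alone is precisely what requires the more delicate Kanovei--Lyubetsky construction.
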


To our knowledge, this is the only prior investigation of this property. In the current paper, we determine the behavior of \emph{Functional Projection} in $L[U]$, the canonical model with one measurable cardinal. We show that in $L[U]$, functional projection holds for all $\boldsymbol{\Pi}^1_n$ relations for all $n\geq 1$. Combined with the well-known existence of a $\SIGMA^1_3$-good well-order of the reals in $L[U]$, an immediate corollary is that in $L[U]$ there are $\PI^1_2$ witnesses for all of the combinatorial families in question. We also determine its behavior under the axiom of Projective Determinacy. Specifically, we show that under $\mathsf{PD}$,  $\boldsymbol{\Pi}^1_n$-\emph{Functional Projection} holds for odd levels, identically to Uniformization, see Figure \ref{fig1}

\begin{figure}[h] \label{fig1}
\[\begin{array}{c|cccccc}
{\mathsf{PD}} & \;\;\;\; \;\boxed{\boldsymbol{\Pi}^1_1}    & \boldsymbol{\Pi}^1_2  &  ...   & \boxed{\boldsymbol{\Pi}^1_{2n-1}} & \boldsymbol{\Pi}^1_{2n}     & \cdots\\

\\
{L \text{ or } L[U]} \;\;\; & \;\;\;\;\; \boxed{\boldsymbol{\Pi}^1_1}    & \boxed{\boldsymbol{\Pi}^1_2} &  ...   & \boxed{\boldsymbol{\Pi}^1_{2n-1}} & \boxed{\boldsymbol{\Pi}^1_{2n}}{}     & \cdots
\end{array}\]

\caption{Boxed point-classes satisfy \emph{Functional Projection}.}
\end{figure}

\bigskip This paper is structured as follows: in Section \ref{Functional Projection} we introduce \emph{Functional Projection}, establish some general properties, and study the situation under \textsf{PD}.  In Section \ref{Complexity Reduction}, we prove that \emph{Functional Projection} for $\PI^1_n$ relations leads to complexity reductions for the following combinatorial objects: 

\begin{itemize}
\item Maximal Almost Disjoint families (MAD)
\item Maximal Independent families (MIF)
\item Maximal Ideal Independent Families
\item Maximal Linear Towers and Maximal Towers.
\item Hausdorff Gaps
\end{itemize}

Since most of these proofs are analogous to their original versions, we will only present the first proof (for MAD families) in detail.

In Section \ref{L[U]} we prove that \emph{Functional Projection} holds for $\PI^1_n$ relations in $L[U]$ for all $n$, and we end the paper with some open questions.

\subsection*{Acknowledgments}

We would like to thank the Austrian Science Fund (FWF) for their support through the Emerging Fields Project EFP2661725 (Fischer, Li) and the European Research Council for their support through the Consolidator Grant no. 101231345 (Millhouse).

\subsection*{Declaration}
The research conducted in this work  was {\bf not} assisted by \emph{Large Language Models} or \emph{Generative AI Models} in any way.

\section{Functional Projection}\label{Functional Projection}

Let $X$ be a Polish space and let $\boldsymbol{\Gamma}$ be a class of sets. In most applications $X$ is the Baire or Cantor space, and $\boldsymbol{\Gamma}$  the projective pointclass $\PI^1_n$. Recall the following well-known property:

\begin{definition}  We say that \emph{$\boldsymbol{\Gamma}$-Uniformization} holds if for any $A \subseteq X \times \ww$ of complexity $\boldsymbol{\Gamma}$, there exists a function $F \subseteq A$ with graph  of complexity $\boldsymbol{\Gamma}$,  such that for all $x \in X$
$$\exists y \:(x,y) \in A \;\Leftrightarrow \; \exists y \: (x,y) \in F$$
\end{definition}

K\^{o}ndo's famous uniformization theorem says that $\PI^1_1$-Uniformization and $\SIGMA^1_2$-Uniformization are true. On the other hand, it is easy to see that Uniformization cannot hold both for a  pointclass and its dual pointclass, so $\SIGMA^1_1$-Uniformization and $\PI^1_2$-Uniformization are provably false.  But that is the strongest uniformization result available in ZFC: whether or not $\PI^1_n$-Uniformization and $\SIGMA^1_n$-Uniformization hold for $n\geq 3$ depends on stronger assumptions.

\bigskip

By relaxing the requirement that the function $F$ is a subset of the original set $A$ we obtain a new principle.

\begin{definition} We say that \emph{$\boldsymbol{\Gamma}$-Functional Projection} (abbreviated by $\boldsymbol{\Gamma}$-$\FP$) holds if for any $A \subseteq X \times \ww$ of complexity $\boldsymbol{\Gamma}$, there exists a function $F \subseteq X\times \omega^\omega$   with graph  of complexity $\boldsymbol{\Gamma}$,  such that for all $x \in X$
$$\exists y \:(x,y) \in A \;\Leftrightarrow \; \exists y \: (x,y) \in F$$
\end{definition}

Clearly $\boldsymbol{\Gamma}$-Uniformization implies $\boldsymbol{\Gamma}$-$\FP$, and in Section \ref{L[U]} we will show that the properties are not equivalent. 
We are mainly interested in \emph{Functional Projection}  for  projective classes $\boldsymbol{\Pi}^1_n$. Note that if $\PI^1_n$-$\FP$ holds, then any $\SIGMA^1_{n+1}$ set is the projection of a $\PI^1_n$ function, so this particular consequence of $\PI^1_n$-Uniformization is still preserved.

\begin{center}
\begin{figure}[h]

   \begin{tikzpicture}[scale=1]

\begin{scope}[xshift=0cm]

\draw[thick,->] (0,0) -- (4.2,0) node[below right] {$X$};
\draw[thick,->] (0,0) -- (0,3.3);
\node[left] at (0,3.0) {$\omega^\omega$};

\draw[densely dotted] (0.9,0) -- (0.9,2.15);
\draw[densely dotted] (3.3,0) -- (3.3,2.15);

\draw[thick]
plot[smooth cycle, tension=0.9] coordinates {
    (0.9,1.35)
    (1.1,2.0)
    (1.6,2.35)
    (2.3,2.4)
    (2.9,2.3)
    (3.25,2.0)
    (3.3,1.45)
    (3.0,1.05)
    (2.2,0.95)
    (1.4,1.0)
};

\draw[thick]
plot[smooth] coordinates {
    (0.9,1.55)
    (1.2,1.72)
    (1.7,1.80)
    (2.2,1.72)
    (2.7,1.78)
    (3.1,1.66)
    (3.3,1.55)
};

\node at (2.15,2.7) {$A$};
\node at (2.2,1.9) {$F$};
\node at (2.1,-0.7) {Uniformization};

\end{scope}

\begin{scope}[xshift=6cm]

\draw[thick,->] (0,0) -- (4.2,0) node[below right] {$X$};
\draw[thick,->] (0,0) -- (0,3.3);
\node[left] at (0,3.0) {$\omega^\omega$};

\draw[densely dotted] (0.9,0) -- (0.9,2.25);
\draw[densely dotted] (3.3,0) -- (3.3,2.25);

\draw[thick]
plot[smooth cycle, tension=0.9] coordinates {
    (0.9,1.7)
    (1.1,2.45)
    (1.7,2.8)
    (2.4,2.8)
    (3.0,2.65)
    (3.3,2.2)
    (3.2,1.75)
    (2.7,1.45)
    (1.9,1.4)
    (1.2,1.45)
};

\draw[thick]
plot[smooth] coordinates {
    (0.9,1.0)
    (1.3,1.05)
    (1.8,1.0)
    (2.3,1.08)
    (2.8,1.02)
    (3.3,1.05)
};

\node at (2.15,3.05) {$A$};
\node at (2.1,0.78) {$F$};
\node at (2.1,-0.7) {Functional Projection};

\end{scope}

\end{tikzpicture}
\label{fig2}
\caption{Uniformization vs. Functional Projection}\end{figure}
\end{center}

\bigskip Before proceeding we prove some basic facts about \emph{Functional Projection} and Borel injections, which follow from classical results in descriptive set theory.

\begin{definition} Let $X,Y$ be two Polish spaces. We say that a function $f: X \to Y$ is a \emph{Borel function} if $f^{-1}[U]$ is Borel for any open $U \subseteq Y$.\end{definition}

\begin{lemma}[Lusin-Suslin] \label{borelgraph} $f:X \to Y$ is Borel if and only if its graph is Borel in $X\times Y$. \end{lemma}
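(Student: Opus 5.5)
The plan is to prove the two directions separately. The forward direction is a direct computation of the graph from a countable base. The converse combines the fact that a Borel graph gives $\SIGMA^1_1$ preimages with Suslin's theorem that $\boldsymbol{\Delta}^1_1$ sets are Borel.

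\emph{($\Rightarrow$)} Suppose $f$ is Borel. Fix a countable base $\{U_n : n\in\omega\}$ for the topology of $Y$. Since $Y$ is Hausdorff, for all $x\in X$ and $y\in Y$ we have $f(x)=y$ if and only if for every $n$, $y\in U_n$ implies $f(x)\in U_n$. Indeed, if $f(x)\neq y$, separate them by a basic open $U_n\ni y$ with $f(x)\notin U_n$. Hence
\[
\operatorname{graph}(f)=\bigcap_{n\in\omega}\Bigl(\bigl((X\times Y)\setminus (X\times U_n)\bigr)\cup \bigl(f^{-1}[U_n]\times Y\bigr)\Bigr).
\]
Each set in this intersection is Borel, because $f^{-1}[U_n]$ is Borel by assumption. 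So the graph is a countable intersection of Borel sets and is Borel.

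\emph{($\Leftarrow$)} Suppose $G=\operatorname{graph}(f)$ is Borel and let $U\subseteq Y$ be open. Since $f$ is total, we can write the preimage in two ways:
\[
x\in f^{-1}[U]\iff \exists y\,\bigl((x,y)\in G\wedge y\in U\bigr)\iff \forall y\,\bigl((x,y)\in G\rightarrow y\in U\bigr).
\]
The first form is a projection of a Borel subset of $X\times Y$, hence $\SIGMA^1_1$. The second form is the complement of the projection of the Borel set $G\cap (X\times (Y\setminus U))$, hence $\PI^1_1$. Therefore $f^{-1}[U]$ is $\boldsymbol{\Delta}^1_1$, and by Suslin's theorem it is Borel. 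So $f$ is Borel.

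The only substantive ingredient is Suslin's theorem $\boldsymbol{\Delta}^1_1=$ Borel. This is a consequence of the Lusin separation theorem and I will quote it as classical. I expect no real obstacle beyond noting that totality of $f$ is needed for the universal form of the preimage.
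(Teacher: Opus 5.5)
Your proof is correct and is essentially the standard argument the paper defers to (it gives no proof of its own, only a citation to Moschovakis 1G.1 and 2E.4). The forward direction writes the graph as a countable intersection using a base of $Y$. The converse shows $f^{-1}[U]$ is both $\SIGMA^1_1$ and $\PI^1_1$, using totality of $f$, and then applies Suslin's theorem $\boldsymbol{\Delta}^1_1=$ Borel.
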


\begin{proof}
    See the proof of \cite[1G.1 and 2E.4]{Moschovakis2009}.
\end{proof}

\noindent \begin{lemma}[Lusin-Suslin]\label{thm: Luzin-Suslin}  
 If  $f: X \to Y $ is a Borel map, $B\subseteq X$ is a Borel set and $f$ is injective on $B$, then  $f[B]$ is Borel.
\end{lemma}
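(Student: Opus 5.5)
The plan is the classical one: show that $f[B]$ is analytic and co-analytic, and conclude by Suslin's theorem ($\boldsymbol{\Delta}^1_1 = $ Borel). The injectivity of $f$ on $B$ is used only to prove co-analyticity.

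\emph{Analyticity.} By Lemma \ref{borelgraph}, the graph $G_f \subseteq X \times Y$ is Borel. Hence $G_f \cap (B \times Y)$ is Borel, and
\[ f[B] = \{ y \in Y : \exists x\, ((x,y) \in G_f \wedge x \in B)\} \]
is the projection of a Borel set. So $f[B]$ is $\SIGMA^1_1$.

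\emph{Co-analyticity.} A naive description of the complement, $y \notin f[B] \iff \forall x \in B\, (f(x) \neq y)$, only shows that $Y \setminus f[B]$ is $\PI^1_1$. That is the wrong side, so this route gives nothing new. I would instead use the Lusin separation theorem, that disjoint analytic sets are separated by a Borel set, applied countably many times. This is the standard argument, e.g.\ Kechris 15.1.

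First reduce to the case where $B$ is closed in a Polish topology and $f$ is continuous. Refine the topology of $X$ to a finer Polish topology with the same Borel sets in which $B$ is clopen and $f$ is continuous. Then pass to $B$ itself and write it as a continuous injective image of a closed subset $C \subseteq \ww$. Composing, we may assume $f$ is continuous and injective on a closed $C \subseteq \ww$.

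Next, for each finite sequence $s$ let $C_s = C \cap N_s$. By injectivity, the analytic sets $f[C_s]$ for distinct $s$ of the same length are pairwise disjoint. Apply Lusin separation for countable disjoint families to choose Borel sets $B_s \supseteq f[C_s]$ that are pairwise disjoint at each level, and shrink them to
\[ B'_s = B_s \cap \overline{f[C_s]} \cap B'_{s^-}. \]
Finally verify the identity
\[ f[C] = \bigcap_n \bigcup_{|s|=n} B'_s. \]
The inclusion $\subseteq$ is clear. For $\supseteq$: a point $y$ in the right-hand side determines a unique branch $z \in \ww$ with $y \in B'_{z\restriction n}$ for all $n$. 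The nonempty closed sets $C_{z \restriction n}$ have vanishing diameter, and $C$ is closed, so they determine a point $x \in C$. Continuity of $f$ together with $y \in \overline{f[C_{z\restriction n}]}$ for all $n$ then gives $f(x) = y$. This exhibits $f[B]$ as Borel directly, so Suslin's theorem is not actually needed.

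\emph{Main obstacle.} The delicate part is the reduction to the continuous, closed-domain case via the change of topology. The facts needed are that for a Borel $B$ and Borel $f$ there is a finer Polish topology making $B$ clopen and $f$ continuous, and that Borel sets are continuous images of closed subsets of $\ww$. These are classical, but they are exactly where the real content of Lusin–Suslin lies. Since the paper treats this as a classical fact, in the text I would, as in Lemma \ref{borelgraph}, point to the standard reference (Moschovakis 2E.7 / Kechris 15.1) after sketching the steps above.
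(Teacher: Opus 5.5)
Your proposal is correct. It is the classical separation argument: reduce to a continuous $f$ that is injective on a closed $C\subseteq\omega^\omega$, separate the level-$n$ images $f[C_s]$ by pairwise disjoint Borel sets shrunk inside $\overline{f[C_s]}$, and recover $f[C]=\bigcap_n\bigcup_{|s|=n}B'_s$. This is exactly the standard proof the paper defers to by citing Moschovakis 1G.5 and 2E.7, since the paper gives no argument of its own beyond that reference.
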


\begin{proof}
    See \cite[ 1G.5 and 2E.7]{Moschovakis2009}.
\end{proof}

\begin{corollary} \emph{Functional Projection} for closed sets fails. 
\end{corollary}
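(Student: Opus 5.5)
We take a closed set $A\subseteq \ww\times\ww$ whose projection $\exists^{\ww}A$ is not Borel, and show that no function $F$ with closed graph can have the same projection. Such an $A$ exists because every analytic subset of $\ww$ is the projection of a closed subset of $\ww\times\ww$, and there are analytic non-Borel sets. Now suppose $F\subseteq \ww\times\ww$ is a function whose graph is closed (hence Borel) with $\exists y\,(x,y)\in F \Leftrightarrow \exists y\,(x,y)\in A$. Put $D=\mathrm{dom}(F)=\exists^{\ww}A$. The aim is to conclude that $D$ is Borel, which contradicts the choice of $A$.

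To get Borelness of $D$, consider the first-coordinate projection $\pi:\ww\times\ww\to\ww$. It is continuous, hence Borel, and it is injective on the graph of $F$ because $F$ is a function. The graph of $F$ is a closed, hence Borel, subset of the Polish space $\ww\times\ww$. By Lemma~\ref{thm: Luzin-Suslin}, $\pi[F]=D$ is therefore Borel. This gives the contradiction, so \emph{Functional Projection} fails for closed sets.

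The only points needing care are the standard facts used along the way. First, the existence of an analytic non-Borel set, for example via a universal analytic set and diagonalization. Second, the representation of analytic sets as projections of closed subsets of $\ww\times\ww$. Lemma~\ref{borelgraph} is not even needed here, since the graph is already closed. There is no real obstacle. The key observation is that a functional projection is an injective image, and injective Borel images of Borel sets stay Borel, whereas projections of closed sets exhaust all of $\SIGMA^1_1$. The same argument in fact shows that \emph{Functional Projection} fails for every class of Borel sets containing the closed sets.
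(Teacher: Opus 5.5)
Your proof is correct and is essentially the paper's argument. The domain of a closed-graph function is a continuous injective image of a closed set, hence Borel by Lusin--Suslin, while some closed subset of $\omega^\omega\times\omega^\omega$ projects onto an analytic non-Borel set; you also spell out that analytic sets are projections of closed sets, which the paper leaves implicit.
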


\begin{proof}
    By Lemma \ref{thm: Luzin-Suslin}, the projection of any closed function $F\subseteq \omega^\omega\times \omega^\omega$ is a Borel set. Let $A \subseteq \ww$ be any analytic set which is not Borel. Then it  cannot be the projection of a closed function. 
\end{proof}

Under determinacy hypothesis, a generalization of the Lusin-Suslin Theorem holds, so we  obtain the behavior of \emph{Functional Projection} under these assumptions.

\begin{lemma}\label{lem: norm}
    Assume $\boldsymbol{\Delta}^1_{2n}$-determinacy. 
    If $f:X\to Y$ is a Borel map, $B\subseteq X$ is a $\boldsymbol{\Delta}^1_{2n+1}$ set, and  $f$ is injective on $B$, then $f[B]$ is $\boldsymbol{\Delta}^1_{2n+1}$.
\end{lemma}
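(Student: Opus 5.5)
Since $f[B]$ is plainly $\boldsymbol{\Sigma}^1_{2n+1}$, the work is to show it is also $\boldsymbol{\Pi}^1_{2n+1}$. I will use the prewellordering property of $\boldsymbol{\Pi}^1_{2n+1}$, which holds under $\boldsymbol{\Delta}^1_{2n}$-determinacy by Moschovakis' first periodicity theorem, together with the resulting separation and Kleene/Suslin-type theory for the pointclass $\boldsymbol{\Delta}^1_{2n+1}$. The argument mimics the classical proof of Lemma~\ref{thm: Luzin-Suslin}, where $\boldsymbol{\Pi}^1_1$ and $\boldsymbol{\Delta}^1_1$ are replaced by $\boldsymbol{\Pi}^1_{2n+1}$ and $\boldsymbol{\Delta}^1_{2n+1}$.

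First, the upper bound. We have
\[
f[B]=\{y : \exists x\,(x\in B \wedge f(x)=y)\}.
\]
The graph of $f$ is Borel by Lemma~\ref{borelgraph}, so $f[B]$ is $\boldsymbol{\Sigma}^1_{2n+1}$.

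For the complement, the direct formula is
\[
Y\setminus f[B]=\{y:\forall x\,(x\in B\to f(x)\neq y)\}.
\]
This is $\boldsymbol{\Pi}^1_{2n+1}$-like but not obviously so, because the universal quantifier ranges over a $\boldsymbol{\Sigma}^1_{2n+1}$ condition. Injectivity is what must save us.

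The standard route is to realize $f[B]$ as the projection of a set with unique witnesses, and then show that such projections are $\boldsymbol{\Delta}^1_{2n+1}$. Since $B\in\boldsymbol{\Pi}^1_{2n+1}$, we can write $B=\{x:\forall z\,(x,z)\in C\}$ with $C\in\boldsymbol{\Sigma}^1_{2n}$. By injectivity, $y\in f[B]$ iff there is a unique $x$ with $f(x)=y$ and $x\in B$. I then plan to invoke the generalized Lusin--Suslin theorem for the scaled pointclass: under $\boldsymbol{\Delta}^1_{2n}$-determinacy, $\boldsymbol{\Pi}^1_{2n+1}$ is scaled by second periodicity. So for a $\boldsymbol{\Pi}^1_{2n+1}$ set $P\subseteq Y\times X$ whose sections have at most one point, the projection $\exists^X P$ is $\boldsymbol{\Delta}^1_{2n+1}$; this is the analogue of Moschovakis 4D.4 / 4F.x applied to the norm-based uniformization theory. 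Applying this with $P=\{(y,x): x\in B\wedge f(x)=y\}$, which is $\boldsymbol{\Pi}^1_{2n+1}$ and has singleton-or-empty sections, gives the result.

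The main obstacle is justifying the step ``$\boldsymbol{\Pi}^1_{2n+1}$ sets with unique witnesses project to $\boldsymbol{\Delta}^1_{2n+1}$'' at the higher level. Classically this uses that $\boldsymbol{\Pi}^1_1$ is closed under $\exists^{\le 1}$, either via the boundedness/effective Suslin theorem, or via Kleene's theorem that $\boldsymbol{\Pi}^1_1$ is closed under quantification over $\Delta^1_1(y)$ reals, since the unique witness is $\Delta^1_1$ in $y$. My first attempt will generalize the latter. Under $\boldsymbol{\Delta}^1_{2n}$-determinacy, Moschovakis' theory (Moschovakis 6B, ``$\Pi^1_{2n+1}$ is closed under $\exists^{\Delta}$'' and 4D.3) shows that $\Pi^1_{2n+1}$ is closed under $\exists x\in\Delta^1_{2n+1}(y)$. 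It also shows, via the scale and the basis theorem for $\Pi^1_{2n+1}$, that a unique member of a $\Pi^1_{2n+1}(y)$ singleton is $\Delta^1_{2n+1}(y)$. Relativizing to a real parameter coding $f$ and $B$, we get
\[
y\in f[B]\iff \exists x\in\Delta^1_{2n+1}(y,a)\,\bigl(x\in B\wedge f(x)=y\bigr),
\]
which is $\Pi^1_{2n+1}(a)$. Hence $f[B]$ is $\boldsymbol{\Pi}^1_{2n+1}$, and together with the upper bound, $f[B]$ is $\boldsymbol{\Delta}^1_{2n+1}$.
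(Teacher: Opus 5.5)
Your overall architecture is sound. Under $\boldsymbol{\Delta}^1_{2n}$-determinacy, the first periodicity theorem makes $\Pi^1_{2n+1}(a)$ a Spector pointclass, hence closed under $\exists^{\Delta}$. The equivalence $y\in f[B]\iff \exists x\in\Delta^1_{2n+1}(y,a)\,(x\in B\wedge f(x)=y)$ would then finish the proof.

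The gap is in your justification of the forward direction of that equivalence, i.e.\ that the unique preimage $x$ is $\Delta^1_{2n+1}(y,a)$. All three principles you invoke for this are false:
a $\boldsymbol{\Pi}^1_{2n+1}$ set whose sections have at most one point need not project to a $\boldsymbol{\Delta}^1_{2n+1}$ set; $\boldsymbol{\Pi}^1_1$ is not closed under $\exists^{\le 1}$; and the member of a $\Pi^1_{2n+1}(y)$ singleton need not be $\Delta^1_{2n+1}(y)$.

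Counterexamples exist already for $n=0$. Kleene's $\mathcal{O}$ is a $\Pi^1_1$ singleton that is not $\Delta^1_1$. By K\^{o}ndo's theorem, every $\boldsymbol{\Sigma}^1_2$ set is the projection of a function with $\boldsymbol{\Pi}^1_1$ graph; this is $\boldsymbol{\Pi}^1_1$-$\FP$, the very subject of the paper. Under determinacy, second periodicity gives the analogous fact for $\boldsymbol{\Pi}^1_{2n+1}$ functions and $\boldsymbol{\Sigma}^1_{2n+2}$ sets, and the paper uses this in the Proposition right after this lemma.

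A telltale sign is that in the hard direction you only use $B\in\boldsymbol{\Pi}^1_{2n+1}$. Your argument would therefore equally ``prove'' that $f[B]$ is $\boldsymbol{\Pi}^1_{2n+1}$ when $B$ is the graph of a $\boldsymbol{\Pi}^1_{2n+1}$ uniformizing function and $f$ is the projection to the first coordinate. That is false.

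The missing ingredient is the other half of the hypothesis, $B\in\boldsymbol{\Sigma}^1_{2n+1}$. Choose $a$ coding $f$, and coding $B$ both as a $\Sigma^1_{2n+1}$ set and as a $\Pi^1_{2n+1}$ set. If $x\in B$ and $f(x)=y$, then $\{x\}=\{z: z\in B\wedge f(z)=y\}$ is a $\Sigma^1_{2n+1}(y,a)$ singleton. So $x(k)=m$ holds iff $\exists z\,(z\in B\wedge f(z)=y\wedge z(k)=m)$, and fails iff $\exists z\,(z\in B\wedge f(z)=y\wedge z(k)\neq m)$. Both relations are $\Sigma^1_{2n+1}(y,a)$ in $(k,m)$, so $x\in\Delta^1_{2n+1}(y,a)$.

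With this in place of your singleton claim, the rest of your argument goes through as written. The matrix $x\in B\wedge f(x)=y$ is $\Pi^1_{2n+1}(a)$, and closure under $\exists^{\Delta}$ gives $f[B]\in\boldsymbol{\Pi}^1_{2n+1}$. Scales and the second periodicity theorem play no role; the prewellordering property of $\Pi^1_{2n+1}(a)$ is all you need.
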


\begin{proof} See, for instance, \cite[6E.7]{Moschovakis2009}. \end{proof}

\begin{proposition}
    Assume  $\boldsymbol{\Delta}^1_{2n}$-determinacy. For all $m\leq n$, $\boldsymbol{\Pi}^1_{2m}$-$\mathsf{FP}$ fails, while $\boldsymbol{\Pi}^1_{2m+1}$-$\mathsf{FP}$ holds.  In particular, under $\mathsf{PD}$, $\boldsymbol{\Pi}^1_{2n}$-$\mathsf{FP}$ fails while $\boldsymbol{\Pi}^1_{2n+1}$-$\mathsf{FP}$ holds for all $n\in \omega$.
\end{proposition}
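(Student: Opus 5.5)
The proof has two parts, one for each parity; both reduce to classical uniformization results together with Lemma \ref{lem: norm}.

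\textbf{Odd levels.} For $\boldsymbol{\Pi}^1_{2m+1}$-$\FP$ with $m\leq n$, I would invoke the Moschovakis Uniformization Theorem. Under $\boldsymbol{\Delta}^1_{2m}$-determinacy, which follows from $\boldsymbol{\Delta}^1_{2n}$-determinacy since $m\le n$, the pointclass $\boldsymbol{\Pi}^1_{2m+1}$ has the scale property. Hence $\boldsymbol{\Pi}^1_{2m+1}$-Uniformization holds. For $m=0$ this is just K\^ondo's theorem, which needs no determinacy. Since Uniformization implies $\FP$, as already noted in the text, this half is done.

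\textbf{Even levels.} For the failure of $\boldsymbol{\Pi}^1_{2m}$-$\FP$ with $1\le m\leq n$, I would argue by contradiction, generalizing the closed-set corollary. Fix a $\boldsymbol{\Sigma}^1_{2m+1}$ set $S\subseteq\ww$ that is not $\boldsymbol{\Delta}^1_{2m+1}$; a universal set provides one. Write $S$ as the projection of a $\boldsymbol{\Pi}^1_{2m}$ set $A\subseteq \ww\times\ww$. If $\FP$ held, there would be a function $F$ with $\boldsymbol{\Pi}^1_{2m}$ graph and $\mathrm{proj}(F)=S$. Regard $F$ as a subset of $\ww\times\ww$ and let $\pi$ be the first-coordinate projection. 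Then $\pi$ is continuous, hence Borel, and it is injective on $F$ because $F$ is the graph of a function. Also $F\in\boldsymbol{\Pi}^1_{2m}\subseteq\boldsymbol{\Delta}^1_{2m+1}$. Applying Lemma \ref{lem: norm} at level $m$ gives $\pi[F]=S\in\boldsymbol{\Delta}^1_{2m+1}$, which is a contradiction.

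For Lemma \ref{lem: norm} at level $m$ we need $\boldsymbol{\Delta}^1_{2m}$-determinacy, which again follows from the hypothesis. The case $m=0$, i.e.\ ``$\boldsymbol{\Pi}^1_0$'', is either excluded or read as closed sets, where the earlier corollary already gives the failure. The clause about $\mathsf{PD}$ then follows immediately, since $\mathsf{PD}$ gives $\boldsymbol{\Delta}^1_{2n}$-determinacy for every $n$.

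The only genuinely nontrivial input is the Moschovakis scale and uniformization theorem for the odd levels. I would cite it (Moschovakis 6C.5 and 6C.3) rather than reprove it. The remaining care point is bookkeeping: checking that each level $m\le n$ uses only the amount of determinacy available from the hypothesis.
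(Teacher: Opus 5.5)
Your proposal is correct and follows essentially the same route as the paper. For odd levels, both arguments use the periodicity (scale) theorem to get $\boldsymbol{\Pi}^1_{2m+1}$-Uniformization and hence $\mathsf{FP}$. For even levels, both apply Lemma \ref{lem: norm} to the first-coordinate projection, which is injective on the $\boldsymbol{\Delta}^1_{2m+1}$ graph of $F$, and contradict the existence of a $\boldsymbol{\Sigma}^1_{2m+1}$ set that is not $\boldsymbol{\Delta}^1_{2m+1}$. Your level-by-level determinacy bookkeeping and your handling of the degenerate case $m=0$ via the closed-set corollary are small clarifications that the paper leaves implicit.
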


\begin{proof}
    Let $m\leq n$. For the even case $2m$, by Lemma \ref{lem: norm} the projection of any partial function $F\subseteq \omega^\omega\times \omega^\omega$ with $\boldsymbol{\Delta}^1_{2m+1}$-graph    is   $\boldsymbol{\Delta}^1_{2m+1}$. Let $A \subseteq \ww$ be any $\boldsymbol{\Sigma}^1_{2m+1}$ set which is not $\boldsymbol{\Delta}^1_{2m+1}$. Then it  cannot be the projection of a partial function with $\boldsymbol{\Pi}^1_{2m}$-graph. 

    For the odd case, by the Periodicity Theorem   (see \cite[6C.5]{Moschovakis2009}), it is well-known that  $\boldsymbol{\Pi}^1_{2m+1}$-\emph{Uniformization} holds, and this immediately implies $\boldsymbol{\Pi}^1_{2m+1}$-\emph{Functional Projection}.
\end{proof}

\begin{lemma}\label{lem: Borel-injection}
    $\boldsymbol{\Pi}^1_n$ sets are closed under Borel injections. If $A\subseteq X$ is a $\boldsymbol{\Pi}^1_n$ set and $f:X\to Y$ is a Borel injection, then $f[A]$ is $\boldsymbol{\Pi}^1_n$.
\end{lemma}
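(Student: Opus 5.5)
The idea is to write $f[A]$ as the intersection of $f[X]$ with a preimage under a Borel map, and then check that each piece has the right complexity.

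\emph{Step 1: the range is Borel.} Since $f$ is a Borel injection on all of $X$, Lemma~\ref{thm: Luzin-Suslin} applied with $B=X$ shows that $R:=f[X]$ is a Borel subset of $Y$.

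\emph{Step 2: the inverse is Borel.} Let $g:=f^{-1}:R\to X$. Its graph is $\{(y,x): (x,y)\in \mathrm{graph}(f)\}$. This is the image of $\mathrm{graph}(f)$ under the coordinate swap, which is a homeomorphism, so it is Borel by Lemma~\ref{borelgraph}. Applying Lemma~\ref{borelgraph} again, with $R$ regarded as a standard Borel space, $g$ is a Borel function. Alternatively, one can argue directly: for any Borel $C\subseteq X$, the set $g^{-1}[C]=f[C]$ is Borel by Lemma~\ref{thm: Luzin-Suslin}, since $f$ is injective on $C$. This direct argument avoids any subtlety about subspaces.

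\emph{Step 3: the key identity.} By injectivity,
\[
f[A]=\{y\in Y: y\in R \wedge g(y)\in A\}.
\]
Since $\boldsymbol{\Pi}^1_n$ is closed under Borel preimages, the set $\{y\in R: g(y)\in A\}$ is the intersection of $R$ with a $\boldsymbol{\Pi}^1_n$ set. To see this concretely, extend $g$ to a Borel map $\tilde g:Y\to X$ by sending $Y\setminus R$ to a fixed point. Then $\tilde g^{-1}[A]$ is $\boldsymbol{\Pi}^1_n$, by induction on $n$ using the fact that preimages commute with complements and with projections along $\omega^\omega$. Finally, $f[A]=R\cap \tilde g^{-1}[A]$, which is $\boldsymbol{\Pi}^1_n$ because $\boldsymbol{\Pi}^1_n$ contains the Borel sets and is closed under finite intersections.

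The main obstacle is Step 2, namely checking that the inverse of a Borel injection is Borel on its Borel range. That step is exactly where Lemma~\ref{thm: Luzin-Suslin} is needed; the rest consists of routine closure properties of the projective pointclasses.
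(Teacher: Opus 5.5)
Your proposal is correct and takes essentially the paper's route: extend $f^{-1}$ to a Borel map on all of $Y$ using Lemma~\ref{thm: Luzin-Suslin}, then pull $A$ back. The only difference is in how the points of $Y\setminus f[X]$ are handled. You send them to an arbitrary point and then intersect with the Borel range $f[X]$. The paper first disposes of the case $A=X$ and then sends them to a point $x_0\notin A$, so that $f[A]=g^{-1}[A]$ holds exactly.
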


\begin{proof}
    We may assume $A\neq X$ as otherwise $f[A]$ is Borel by  Lemma \ref{thm: Luzin-Suslin}. Fix $x_0\not\in A$ and   extend $f^{-1}$ to a function from $Y$ to $X$. $$g(y) = \begin{cases}
        f^{-1}(y) & \text{if }y\in f[X] \\
        x_0 & \text{if } y\not\in f[X]
    \end{cases}$$

   \noindent  By Lemma \ref{thm: Luzin-Suslin}, $f[X]$ is Borel and thus $$Graph(g) = Graph(f^{-1})\cup ((Y - f[X])\times\{x_0\})$$ which is Borel. So $g$ is a Borel function by  Lemma \ref{borelgraph}.
   Since $\PI^1_n$ sets are closed under Borel pre-images, it follows that   $f[A] = g^{-1}[A]$  is $\boldsymbol{\Pi}^1_n$.
\end{proof}

Finally, as a consequence of the above, it is easy to see that  $\PI^1_n$-\emph{Functional Projection}  has the following equivalent formulations: 

\begin{lemma}\label{lem: short-cut-FP}
    For any $n$, the following are equivalent:

    \begin{enumerate}    
    \item $\PI^1_n$-$\FP$.
    \item For any $\PI^1_n$ subset $A \subseteq \ww \times \ww$   there exists a function $F$   with $\PI^1_n$ graph such that:
$$\exists y \:(x,y) \in A \;\Leftrightarrow \; \exists y \: (x,y) \in F$$
    
   \item For every perfect Polish space $X$ and $Y$, for any $\PI^1_n$ subset $A \subseteq X \times Y$    there exists a function $F$   with $\PI^1_n$ graph such that:
$$\exists y \:(x,y) \in A \;\Leftrightarrow \; \exists y \: (x,y) \in F$$ 
    \end{enumerate}
\end{lemma}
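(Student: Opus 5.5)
We will prove the cycle $(3)\Rightarrow(1)\Rightarrow(2)\Rightarrow(3)$. The implications $(3)\Rightarrow(1)$ and $(1)\Rightarrow(2)$ are immediate specializations. For $(3)\Rightarrow(1)$, take $Y=\ww$; here we assume, as the paper's applications do, that $X$ in (1) is perfect (Baire or Cantor space). If $X$ is allowed to be an arbitrary Polish space, we handle it by the same transfer argument given below. For $(1)\Rightarrow(2)$, take $X=\ww$. So the substance is $(2)\Rightarrow(3)$.

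To show $(2)\Rightarrow(3)$, we use the classical fact that every perfect Polish space is the image of $\ww$ under a continuous bijection. More convenient here is the consequence of Lemma \ref{thm: Luzin-Suslin} together with the Borel isomorphism theorem: there are Borel injections $\varphi:X\to\ww$ and $\psi:Y\to\ww$, and in fact Borel bijections, since $X$ and $Y$ are uncountable. We fix Borel isomorphisms $\varphi:X\to\ww$ and $\psi:Y\to\ww$.

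Given a $\PI^1_n$ set $A\subseteq X\times Y$, we let $A'=(\varphi\times\psi)[A]\subseteq\ww\times\ww$. Since $\varphi\times\psi$ is a Borel injection (indeed a Borel bijection), Lemma \ref{lem: Borel-injection} shows that $A'$ is $\PI^1_n$. Applying (2) to $A'$ gives a function $F'$ with $\PI^1_n$ graph whose projection equals that of $A'$. We then set $F=(\varphi\times\psi)^{-1}[F']=(\varphi^{-1}\times\psi^{-1})[F']$.

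We check three things about $F$:
\begin{enumerate}
\item $F$ is a function, because $\varphi$ and $\psi$ are bijections.
\item $F$ is $\PI^1_n$, as a Borel preimage of a $\PI^1_n$ set; alternatively one can again invoke Lemma \ref{lem: Borel-injection}.
\item $F$ has the right projection: $\exists y\,(x,y)\in A$ iff $\exists y'\,(\varphi(x),y')\in A'$ iff $\exists y'\,(\varphi(x),y')\in F'$ iff $\exists y\,(x,y)\in F$. The middle equivalences use surjectivity of $\psi$.
\end{enumerate}

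The only real obstacle is justifying the Borel isomorphisms. If we only had injections, the step through $\psi$ would fail. The problem is that $F'$ may pick a value $y'$ outside $\psi[Y]$, and then $F$ would lose points of its projection. Citing the Borel isomorphism theorem (Kuratowski) avoids this.

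If we prefer to rely only on Lemmas \ref{thm: Luzin-Suslin} and \ref{lem: Borel-injection}, we argue differently. We take a continuous bijection $\pi:C\to Y$ from a closed $C\subseteq\ww$; its inverse is Borel by Lemma \ref{thm: Luzin-Suslin}. We then replace $A'$ by $\{(\varphi(x),z):z\in C,\ (x,\pi(z))\in A\}$, which keeps all witnesses inside the range. This is a simple way around the issue.

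Non-perfect $X$ can be handled in the same way, using a Borel injection into $\ww$, because the $x$-coordinate is only transported and never needs to be surjective.
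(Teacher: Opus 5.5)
Your main argument is correct and follows the paper's route. You transport $A$ along the Borel isomorphism $\varphi\times\psi$, apply (2), and pull $F'$ back. This is the unwinding of the paper's one-line proof, which cites the closure of $\PI^1_n$ under Borel preimages and Borel injections and the fact that every perfect Polish space is Borel-isomorphic to $\omega^\omega$.

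The fallback in your penultimate paragraph, however, does not work. The set $\{(\varphi(x),z): z\in C,\ (x,\pi(z))\in A\}$ is just $(\varphi\times\pi^{-1})[A]$, which is your original construction with the non-surjective injection $\psi=\pi^{-1}$. Confining the witnesses of this set to $C$ accomplishes nothing. Clause (2) only gives a function $F'$ with the same projection, not one contained in the set; that is exactly what separates Functional Projection from Uniformization. So $F'$ may still take values outside $C$, and the obstacle you identified remains.

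If you want to avoid the Borel isomorphism theorem, repair the output side instead. Fix a continuous injection $h:\omega^\omega\to Y$, which exists because $Y$ is uncountable and hence contains a copy of $2^\omega$. Then put $F=\{(x,h(z)) : (\varphi(x),z)\in F'\}$. This is a function, and its domain is the projection of $A$. It is $\PI^1_n$ because it is the image of the Borel preimage $\{(x,z):(\varphi(x),z)\in F'\}$ under the Borel injection $\mathrm{id}_X\times h$, so Lemma \ref{lem: Borel-injection} applies. With this change, Borel injections in both directions suffice and no surjectivity is needed.
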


\begin{proof}
    This follows by the previous results and the fact that any perfect Polish space is  isomorphic to $\omega^\omega$ via a Borel bijection.
\end{proof}

\bigskip
\section{Complexity Reduction}\label{Complexity Reduction}

In this section, we show that $\PI^1_n$-\emph{Functional Projection} is sufficient to produce the  complexity reduction results that were previously established using $\PI^1_n$-\emph{Uniformization}.

\subsection{Perfect Set Property}

Recall that a subset of the Cantor space has the \emph{Perfect Set Property} ($\PSP$) if it is countable or contains a perfect set. A counterexample to the $\PSP$ can be viewed as similar to a maximal combinatorial object in this sense. 
\begin{theorem}[Folklore]
    Assume $\PI^1_n$-$\FP$. Then all $\SIGMA^1_{n+1}$ sets have the $\PSP$  iff all $\PI^1_n$ sets have the $\PSP$.
\end{theorem}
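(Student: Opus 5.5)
The forward direction is trivial: every $\PI^1_n$ set is $\SIGMA^1_{n+1}$. For the converse, assume all $\PI^1_n$ sets have the $\PSP$ and let $B\subseteq\dw$ be $\SIGMA^1_{n+1}$. The plan is to write $B$ as the projection of a $\PI^1_n$ \emph{function}, transfer the perfect set property from the graph of that function to $B$, and use injectivity of the projection on a graph.

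First choose a $\PI^1_n$ set $A\subseteq \dw\times\ww$ with $B=\{x : \exists y\,(x,y)\in A\}$. By $\PI^1_n$-$\FP$, in the form of Lemma \ref{lem: short-cut-FP}(3) for the spaces $\dw$ and $\ww$, there is a function $F$ with $\PI^1_n$ graph whose projection is exactly $B$. Next transfer $F$ into Cantor space. Fix a Borel bijection $h:\dw\times\ww\to\dw$, which exists since both are perfect Polish spaces. By Lemma \ref{lem: Borel-injection}, the image $h[F]$ is a $\PI^1_n$ subset of $\dw$, so by hypothesis it is either countable or contains a perfect set.

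Now split into cases.
\begin{enumerate}
\item If $h[F]$ is countable, then $F$ is countable, and hence its projection $B$ is countable.
\item If $h[F]\supseteq P$ with $P$ perfect, then $C=h^{-1}[P]\subseteq F$ is an uncountable Borel set. The projection $\pi:\dw\times\ww\to\dw$ is continuous and, because $F$ is a function, injective on $C$. So $\pi[C]\subseteq B$ is an uncountable Borel set by Lemma \ref{thm: Luzin-Suslin}. By the perfect set theorem for Borel sets, which holds in ZFC, $\pi[C]$ contains a perfect set, and therefore so does $B$.
\end{enumerate}

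The key step, and the only place where $\FP$ is genuinely used, is the injectivity of $\pi$ on $C$. This is exactly what allows uncountability to pass from the graph to $B$. It also makes it unnecessary to track whether the perfect subset of $F$ projects onto something perfect, since we only need Lusin–Suslin together with the classical perfect set theorem for Borel sets.
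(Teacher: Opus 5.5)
Your proof is correct and follows the paper's route. You write the $\SIGMA^1_{n+1}$ set as the projection of a $\PI^1_n$ function, apply the $\PSP$ to its graph, and use that projection is injective on any subset of a graph to get an uncountable definable subset of $B$, which has a perfect subset by the classical perfect set theorem. Your differences are refinements only: the Borel transfer of the graph into $\dw$ via Lemma \ref{lem: Borel-injection} makes explicit a step the paper skips, since it applies the $\PSP$ hypothesis (stated for subsets of Cantor space) directly to a subset of $\dw\times\ww$; and using Lusin--Suslin with the Borel perfect set theorem instead of the analytic one changes nothing essential.
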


\begin{proof} For the non-trivial direction, suppose all $\PI^1_n$ sets have the Perfect Set Property. Let $A \subseteq \dw$ be an uncountable  $\SIGMA^1_{n+1}$ set. By $\PI^1_n$-$\FP$, there is  a  function $F: A \to  \ww$ with a $\PI^1_n$ graph and with dom$(F)=A$. Then $F$ (identified with its graph) is also an uncountable set, so by assumption it contains a perfect subset $C \subseteq F$. Then the projection $p[C]$ is  an analytic uncountable subset of $A$, so there is a perfect set $P \subseteq p[C] \subseteq A$ by the Perfect Set Theorem for analytic sets.
\end{proof}

\subsection{Maximal Almost Disjoint Families}

Recall that a \emph{maximal almost disjoint} (MAD) \emph{family} is an infinite collection $\mathcal{A} \subseteq [\omega]^\omega$ such that any two elements of it have finite intersection, and which is maximal with respect to this property. 

\bigskip
The following proof is adapted from T\"ornquist's original result in \cite{Tornquist13}.

\begin{theorem}
    Assume $\PI^1_n$-$\FP$. Then there is a $\boldsymbol{\Sigma}^1_{n+1}$ MAD family iff there is a $\boldsymbol{\Pi}^1_{n}$ MAD family.
\end{theorem}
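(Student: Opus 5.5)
The nontrivial direction is from a $\SIGMA^1_{n+1}$ MAD family to a $\PI^1_n$ one, since $\PI^1_n\subseteq\SIGMA^1_{n+1}$. I will follow Törnquist's argument, using $\PI^1_n$-$\FP$ wherever he used $\PI^1_1$-uniformization.

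Let $\mathcal{A}\subseteq[\omega]^\omega$ be a $\SIGMA^1_{n+1}$ MAD family. Identify $[\omega]^\omega$ with a subset of $\dw$, and write $\mathcal{A}$ as the projection of a $\PI^1_n$ set $B\subseteq \dw\times\ww$. By $\PI^1_n$-$\FP$, in the form of Lemma \ref{lem: short-cut-FP}, there is a partial function $F\subseteq \dw\times\ww$ with $\PI^1_n$ graph whose domain is exactly $\mathcal{A}$. So each $a\in\mathcal{A}$ comes with a unique witness $F(a)$. Since $F$ is a function, the set $F$ itself, viewed as a set of pairs, is in canonical bijection with $\mathcal{A}$.

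The idea is to encode each pair $(a,F(a))$ into a single infinite subset of an infinite piece of $\omega$, in a Borel injective way. Partition $\omega$ into two infinite sets, $\omega=\Delta_0\sqcup\Delta_1$. Fix a bijection $\Delta_1\cong 2^{<\omega}$ and a Borel injection $e:\dw\times\ww\to\dw$. For $z\in\dw$, let $\sigma(z)=\{z\restriction k: k\in\omega\}$ be the branch through $2^{<\omega}$ determined by $z$, pulled back to a subset of $\Delta_1$. Distinct branches are almost disjoint.

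Fix a countable MAD family $\{c_k\}$ on $\Delta_0$; it is Borel. Consider the image of $\mathcal{A}$ restricted to $\Delta_1$. Törnquist's trick is to take a Borel bijection $h$ between $\Delta_1$ and $\omega$, and to replace each $a\in\mathcal{A}$ with the branch $\sigma(e(a,F(a)))$ coding both $a$ and its witness. Then add, in a Borel fashion, enough sets to restore maximality. The cleanest version: let $\mathcal{B}=\{\sigma(e(a,F(a))) : a\in\mathcal{A}\}\cup\{c_k\}$, viewed inside $\Delta_0\sqcup\Delta_1$.

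Complexity: the map $(a,y)\mapsto \sigma(e(a,y))$ is a Borel injection, so by Lemma \ref{lem: Borel-injection} the image of the $\PI^1_n$ set $F$ is $\PI^1_n$. The added countable part is Borel, so $\mathcal{B}$ is $\PI^1_n$. Almost disjointness follows because distinct branches are almost disjoint, and because the two parts live on disjoint pieces $\Delta_0$ and $\Delta_1$.

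\textbf{Main obstacle: maximality.} Branches alone are not maximal on $\Delta_1$; an antichain in $2^{<\omega}$ is almost disjoint from every branch. The fix is Törnquist's actual construction. Take a Borel bijection between $\omega$ and $\Delta_1$ and transport $\mathcal{A}$ onto it. Then, for each $a\in\mathcal{A}$, replace $a$ by the infinite set $a'$ obtained by splitting $a$ into its increasing enumeration $a=\{a_0<a_1<\dots\}$ and encoding the real $e(a,F(a))$ along it. Concretely, $a'=\{\langle a_k, e(a,F(a))\restriction k\rangle : k\in\omega\}\subseteq\omega\times 2^{<\omega}\cong\omega$.

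Then $\mathcal{B}=\{a' : a\in\mathcal{A}\}\cup\{$the vertical columns $\{m\}\times 2^{<\omega}$ minus appropriate pieces$\}$. More precisely, one adds a Borel MAD family of sets $C$ that are almost contained in finitely many columns, refining each column $\{m\}\times 2^{<\omega}$ by a Borel family that is MAD relative to the sets $a'$ meeting that column finitely.

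To check maximality, take $x\subseteq\omega\times 2^{<\omega}$. If $x$ meets infinitely many columns, its column-projection meets some $a\in\mathcal{A}$ infinitely, and a pruning/König argument should give infinite intersection with $a'$. Otherwise $x$ meets some column infinitely, and the column-part handles it. I expect verifying this maximality, and ensuring the column-part is Borel and almost disjoint from all $a'$, to be the delicate step. I would reproduce Törnquist's original combinatorics verbatim, since they involve only Borel maps and the single $\PI^1_n$ set $F$, with Lemma \ref{lem: Borel-injection} replacing the $\PI^1_1$-specific closure facts.
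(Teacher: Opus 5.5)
Your use of $\PI^1_n$-$\FP$ to get a $\PI^1_n$ partial function $F$ with domain $\mathcal{A}$ is fine, and so are the Borel injectivity of $(a,y)\mapsto a'$ and the almost disjointness of the sets $a'$. The gap is maximality, and it is not a technicality: the family you describe is not MAD, and no pruning/K\"onig argument can rescue it.

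Each $a'=\{\langle a_k, e(a,F(a))\upharpoonright k\rangle : k\in\omega\}$ meets every column $\{m\}\times 2^{<\omega}$ in at most one point, and at $a_k$ its second coordinate must have length exactly $k$. Let $y=\{(m,\emptyset): m\in\omega\}$. Then $y\cap a'\subseteq\{(a_0,\emptyset)\}$ for every $a\in\mathcal{A}$. Since $y$ meets each column in a single point, it also has finite intersection with every set almost contained in finitely many columns. So $y$ is almost disjoint from every member of $\mathcal{B}$, whatever column-bounded sets you add. Knowing that the column projection of $y$ meets some $a$ infinitely says nothing about whether the second coordinates of $y$ agree with $e(a,F(a))$ at the prescribed lengths.

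Catching such $y$ would require adding sets spread over infinitely many columns that are almost disjoint from all $a'$. That means completing an a.d.\ family depending on $\mathcal{A}$ and $F$ to a MAD family in a $\PI^1_n$ way, which is the original problem again. Deferring to ``T\"ornquist's combinatorics verbatim'' does not help, since the construction you wrote down is not his. Also, the countable MAD family $\{c_k\}$ in your first attempt does not exist, because no countably infinite a.d.\ family is maximal.

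The missing idea is that the coded sets attached to $a\in\mathcal{A}$ must \emph{jointly cover a full copy of $a$}. Then any set meeting $a$ infinitely meets one of them infinitely, while each coded set separately is still a Borel injective image of $(a,F(a))$. The paper does this on $3\times\omega$, taking the witness $w$ in $\dw$ (via Lemma~\ref{lem: short-cut-FP}):
\begin{align*}
g_0(a,w)&=(\{0\}\times a)\cup\bigl(\{2\}\times\{a(n): n\in w\}\bigr),\\
g_1(a,w)&=(\{1\}\times a)\cup\bigl(\{2\}\times\{a(n): n\notin w\}\bigr).
\end{align*}
The new family is $g_0[F]\cup g_1[F]$.
\begin{itemize}
\item Rows $0$ and $1$ carry complete copies of $a$. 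This gives almost disjointness across different $a$, and maximality for test sets with infinite trace on those rows.
\item Row $2$ splits $a$ into two complementary pieces according to $w$, which makes $g_0,g_1$ injective in $w$. If a test set $y$ has infinite row-$2$ section $y_2$, then $y_2$ meets some $a\in\mathcal{A}$ infinitely, hence meets one of the two pieces infinitely.
\item Functionality of $F$ ensures that for each $a$ only the two disjoint sets $g_0(a,F(a))$ and $g_1(a,F(a))$ occur.
\end{itemize}
Lemma~\ref{lem: Borel-injection} then gives that the family is $\PI^1_n$.
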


\begin{proof}
    Let $A$ be a $\boldsymbol{\Sigma}^1_{n+1}$ MAD family. By $\PI^1_n$-$\FP$ there is a partial function $Q\subseteq [\omega]^\omega\times 2^\omega$   with a $\PI^1_n$ graph such that $$x \in A \iff \exists ! w \: (x,w) \in Q$$
    Define the following maps $g_0,g_1: [\omega]^\omega\times 2^\omega \to \mathcal{P}(3\times \omega)$:
    \begin{align*}
        g_0: x,w &\mapsto (\{0\}\times x) \cup (\{2\}\times \{x(n)\mid n\in w\}) \\
        g_1: x,w &\mapsto (\{1\}\times x) \cup (\{2\}\times \{x(n)\mid n\not\in w\})
    \end{align*}

    Let $A' = g_0[Q]\cup g_1[Q]$. To see that $A'$ is almost disjoint, pick $z\neq z'\in A'$, there are two cases:

    \begin{enumerate}
    \item  $z = g_i(x,w), z' = g_i(x',w')$. Without loss of generality assume $i = 0$. We notice that it cannot be that $x = x'$, since if so then, by the fact that $Q$ is a function, $w = w'$ and thus $z = z'$. So $x\neq x'$ and $z\cap z'\subseteq (\{0\}\times (x\cap x')) \cup (\{2\}\times (x\cap x'))$, which is finite since $A$ is a MAD family.

    \item $z = g_i(x,w)$, $z' = g_{1-i}(x',w')$. Without loss of generality assume $i = 0$. If $x = x'$, then $w = w'$ and hence by construction of $g_0$ and $g_1$, $z\cap z' = \emptyset$. If $x\neq x'$, then $z\cap z'\subseteq\{2\}\times x\cap x'$ is finite.

    To see that it is maximal, for $y\in [3\times \omega]^\omega$, $i\in \{0,1,2\}$ write $y_i = \{n\in \omega\mid (i,n)\in y\}$. It suffices to show that for every $y\in [3\times \omega]^\omega$, there is $z\in A'$ s.t. $y\cap z$ is infinite. As $y\in [3\times \omega]^\omega$, one of $y_0,y_1,y_2$ is infinite. If $y_0$ is infinite, then by the fact that $A$ is MAD, there is $x\in A$ s.t. $|x\cap y_0| = \omega$. There is $w$ s.t. $Q(x,w)$ and thus $g_0(x,w)\in A'$ and $y\cap g_0(x,w)\supseteq \{0\}\times (y_0\cap x)$ is infinite. The case for $y_1$ is similar. For the case that $y_2$ is infinite, there is $x\in A$ s.t. $|x\cap y_2| = \omega$ and there is $w$ s.t. $Q(x,w)$. Then $y_2$ intersects one of $\{{x}(n)\mid n\in w\}, \{{x}(n)\mid n\not \in w\}$ infinitely, hence $y$ intersects one of $g_0(x,w),g_1(x,w)$ infinitely. 
    
    \end{enumerate}
    
    This finishes the proof that $A'$ is an infinite MAD family.

    \

    To see that $A'$ is  $\boldsymbol{\Pi}^1_n$, we note that $g_0, g_1$ are Borel injections from $([\omega]^\omega)^2$ to $\mathcal{P}(3\times \omega)$, thus by Lemma \ref{lem: Borel-injection}, $A' = g_0[Q]\cup g_1[Q]$ is $\boldsymbol{\Pi}^1_n$.
\end{proof}

It is worth noticing that based on an observation from \cite{Fischer-Schrittesser21}, with a sufficient absoluteness assumption, the above construction yields a forcing-indestructible MAD family if we start from one.

\begin{definition}
    A forcing poset $\mathbb{P}$ is $\boldsymbol\Sigma^1_{n+1}$-\emph{generic absolute} if for any $\mathbb{P}$-generic filter $G$ over $V$ and any $\boldsymbol\Sigma^1_{n+1}$ property $\varphi(x)$, for any  real $x \in V$: $$\varphi^{V}(x)\iff \varphi^{V[G]}(x)$$
\end{definition}

\begin{corollary}\label{cor: forcing-preservation}
    Assume that $\mathbb{P}$ is $\boldsymbol\Sigma^1_{n+1}$-generic absolute, and assume that $\PI^1_n$-$\FP$  holds. If there is a $\boldsymbol\Sigma^1_{n+1}$ $\mathbb{P}$-indestructible MAD family then there is a $\boldsymbol\Pi^1_{n}$ $\mathbb{P}$-indestructible MAD family.
\end{corollary}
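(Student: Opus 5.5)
We reuse the construction from the preceding theorem verbatim and check that $\mathbb{P}$-indestructibility passes from $A$ to $A'$. Let $A$ be a $\SIGMA^1_{n+1}$ MAD family that remains maximal in every $\mathbb{P}$-generic extension, where maximality is tested against the reinterpretation $A^{V[G]}$ of its defining formula. By $\PI^1_n$-$\FP$, fix a partial function $Q$ with $\PI^1_n$ graph satisfying $x\in A \iff \exists w\,(x,w)\in Q$. Put $A'=g_0[Q]\cup g_1[Q]$. By the previous theorem, $A'$ is a $\PI^1_n$ MAD family in $V$. It remains to show that in $V[G]$ the reinterpreted set $A'^{V[G]}=g_0[Q^{V[G]}]\cup g_1[Q^{V[G]}]$ is still MAD.

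The key step is to transfer to $V[G]$ the properties of $Q$ and $A$ that the MAD argument used. Each of the following is a projective statement with real parameters (the codes of $A$ and $Q$) of complexity at most $\SIGMA^1_{n+1}$, or the negation of one:
\begin{enumerate}
\item $\forall x\,[x\in A\leftrightarrow \exists w\,Q(x,w)]$;
\item $Q$ is a function;
\item $A$ is almost disjoint.
\end{enumerate}
The plan is to argue that each of these, true in $V$, remains true in $V[G]$. The tool is $\SIGMA^1_{n+1}$-generic absoluteness, applied to the appropriate instances together with the fact that $\PI^1_n$ and $\SIGMA^1_{n+1}$ formulas are evaluated by the same defining formulas in both models.

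The main obstacle is that items 1--3 are universal statements of complexity $\PI^1_{n+2}$. Generic absoluteness as defined is only for real parameters in $V$, so it does not directly cover new reals in $V[G]$. I would handle this as follows.
\begin{itemize}
\item For item 1, I would take it as part of the meaning of ``$\mathbb{P}$-indestructible'' that $A^{V[G]}$ is the reinterpretation of the same $\SIGMA^1_{n+1}$ definition. I would then choose $Q$ so that its projection is literally that definition: write $A=\exists w\,R(x,w)$ with $R\in\PI^1_n$, and note that $\FP$ provides $Q$ with $\exists w\,Q\leftrightarrow\exists w\,R$. This equivalence is a $\PI^1_{n+2}$ sentence, so I would need a strengthened absoluteness, either $\SIGMA^1_{n+2}$-upward absoluteness or absoluteness for the $V[G]$-version.
\item Failing that, I would restrict to the setting of \cite{Fischer-Schrittesser21}, where these $\PI^1_{n+2}$ facts are assumed to be absolute. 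Indeed, the intended reading of the hypothesis is that $\mathbb{P}$ preserves the relevant projective statements about the fixed parameters, and I would state explicitly that this is what is used.
\item Item 3 ($A$ almost disjoint) and item 2 ($Q$ a function) are then handled the same way.
\end{itemize}

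With items 1--3 established in $V[G]$, and $A^{V[G]}$ MAD by indestructibility, I would rerun the combinatorial verification of the previous theorem inside $V[G]$:
\begin{itemize}
\item the two almost-disjointness cases use only that $Q$ is a function and that $A$ is almost disjoint;
\item maximality uses only that every $x\in A$ has some $w$ with $Q(x,w)$, together with the maximality of $A^{V[G]}$.
\end{itemize}
Hence $A'^{V[G]}$ is MAD in $V[G]$, so $A'$ is a $\PI^1_n$ $\mathbb{P}$-indestructible MAD family.
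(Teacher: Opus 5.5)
Your proposal has a genuine gap. At the decisive step, item 1, you correctly see that the statement is $\PI^1_{n+2}$. You then fall back on hypotheses that are not in the statement: $\SIGMA^1_{n+2}$-absoluteness, or simply assuming ``these $\PI^1_{n+2}$ facts are absolute''. So what you outline proves a different, weaker result.

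The trouble comes from reading $\mathbb{P}$-indestructibility as ``the reinterpretation $A^{V[G]}$ is MAD''. Under that reading you would need $A^{V[G]}\subseteq p[Q]^{V[G]}$ for new reals, and nothing in the hypotheses gives this. The paper uses the standard reading: the ground-model set $A$ is still maximal in $V[G]$. With that reading, item 1 is never needed in $V[G]$.

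Your complexity count is also off. ``$Q$ is a function'' is $\forall x,w,w'\,(\neg Q(x,w)\vee\neg Q(x,w')\vee w=w')$ with $\neg Q\in\SIGMA^1_n$. ``$p[Q]$ is almost disjoint'' is $\forall x,y,z_1,z_2\,(x=y\vee\neg Q(x,z_1)\vee\neg Q(y,z_2)\vee|x\cap y|<\omega)$. Both are $\PI^1_{n+1}$ sentences with a parameter from $V$, so $\SIGMA^1_{n+1}$-generic absoluteness applies to them directly. Your worry that absoluteness ``does not cover new reals'' is misplaced: the parameter is in $V$, while the quantifiers range over all reals of $V[G]$.

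The missing idea is a squeeze argument. Put $A^*=p[Q]^{V[G]}$.
\begin{enumerate}
\item Upward absoluteness of the $\PI^1_n$ graph at ground-model pairs gives $A\subseteq A^*$.
\item Absoluteness of ``$p[Q]$ is almost disjoint'' makes $A^*$ almost disjoint in $V[G]$.
\item Since $A$ is still maximal in $V[G]$, it follows that $A^*=A$.
\item Absoluteness of ``$Q$ is a function'' then gives $Q^{V[G]}=Q^V$. If $(x,w)\in Q^{V[G]}$, then $x\in A^*=A$, so some $w'\in V$ has $(x,w')\in Q^V\subseteq Q^{V[G]}$, and functionality forces $w=w'$.
\end{enumerate}
Hence $(g_0[Q]\cup g_1[Q])^{V[G]}=A'$. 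The combinatorial verification of the preceding theorem then runs in $V[G]$, using only that $A$ is maximal there and that $Q$ is a function. No $\PI^1_{n+2}$ statement ever enters.
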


\begin{proof}  Let $A$ be  a $\boldsymbol\Sigma^1_{n+1}$ $\mathbb{P}$-indestructible MAD family. Let $Q\subseteq [\omega]^\omega\times 2^\omega$ be a partial function with $\boldsymbol{\Pi}^1_{n}$-graph  s.t. $$x \in A \iff \exists ! w \: (x,w) \in Q$$
    As before, let $A' = g_1[Q]\cup g_2[Q]$. We need to show that $A'$ is a MAD family not only in $V$ but also in the $\mathbb{P}$-generic extension $V[G]$.

    Let $A^*$ denote the interpretation of $p[Q]$ in $V[G]$;  to be precise $$A^* = \{x \in V[G] \mid \exists y \in V[G] \;(x,y) \in Q^{V[G]}\}.$$
Note that by $\boldsymbol{\Pi}^1_n$-absoluteness it is clear that $A \subseteq A^*$. Moreover,  the statement  ``$p[Q]$ is an a.d. family'' is $$\forall x\neq y (\exists z_1 \exists z_2 ((x,z_1)\in Q\land (y,z_2)\in Q)\to 
    |x\cap y|< \omega)$$
This is a $\boldsymbol{\Pi}^1_{n+1}$ statement that is true in $V$, so by $\boldsymbol\Sigma^1_{n+1}$-absoluteness it is also true in $V[G]$. Hence $A^*$ is an a.d. family in $V[G]$. But $A$ was $\mathbb{P}$-indestructible, so it remains maximal in $V[G]$, so $A = A^*$. 

    Now we show that $Q^V = Q^{V[G]}$. The $\subseteq$-direction is by $\boldsymbol{\Pi}^1_n$-absoluteness. For the $\supseteq$-direction, first note that, by $\boldsymbol{\Pi}^1_{n+1}$-absoluteness, $Q^{V[G]}$ is a graph of a function. If $(x,w)\in Q^{V[G]}$, then $x\in A^*$, and thus $x\in A$. Hence there is $w'\in V$ s.t. $(x,w')\in Q^V$, thus $(x,w')\in Q^{V[G]}$. By functionality of $Q^{V[G]}$, $w = w'$. Thus $(x,w)\in Q^V$.

    Finally, $(g_1[Q]\cup g_2[Q])^{V[G]} = g_1[Q]\cup g_2[Q]$ holds since $Q = Q^{V[G]}$ and Borel functions are absolute on ground model reals.  Thus, the argument in the above theorem that $A'$ is a MAD family can be carried out in $V[G]$. This concludes the proof.
\end{proof}

We mention some consequences of this result:

\begin{corollary}
    If there is a $\boldsymbol{\Sigma}^1_2$ Cohen-indestructible MAD family, then there is a $\boldsymbol{\Pi}^1_1$ Cohen-indestructible MAD family. In particular, there is a $\boldsymbol{\Pi}^1_1$ Cohen-indestructible MAD family in $L$.
\end{corollary}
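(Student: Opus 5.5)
The plan is to apply Corollary \ref{cor: forcing-preservation} with $n=1$ and $\mathbb{P}$ equal to Cohen forcing. Two hypotheses need checking. First, $\PI^1_1$-$\FP$ must hold. This is immediate in ZFC, since K\^ondo's $\PI^1_1$-Uniformization Theorem gives $\PI^1_1$-Uniformization, and uniformization trivially implies $\FP$. Second, Cohen forcing must be $\boldsymbol\Sigma^1_2$-generic absolute. This follows from Shoenfield absoluteness: every $\boldsymbol\Sigma^1_2$ property with a real parameter from $V$ is absolute between $V$ and any outer model with the same ordinals, in particular $V[G]$. Both hypotheses hold, so the first sentence of the corollary is exactly the conclusion of Corollary \ref{cor: forcing-preservation}.

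For the ``in particular'' part, I will produce a $\boldsymbol\Sigma^1_2$ Cohen-indestructible MAD family in $L$ and then apply the first part. The plan is to run the standard recursion along the $\Sigma^1_2$-good well-order $<_L$ of the reals, building $A=\{a_\alpha:\alpha<\omega_1\}$. At stage $\alpha$ we have a countable a.d.\ family $\{a_\beta:\beta<\alpha\}$. We take the $<_L$-least real coding a pair consisting of a countable enumeration of the family and a nice Cohen name $\dot y$ for an infinite subset of $\omega$ (or, more simply, a real in the relevant position of the enumeration). We then choose $a_\alpha$ as the $<_L$-least infinite set almost disjoint from all earlier $a_\beta$ such that $\Vdash$ ``$a_\alpha\cap\dot y$ is infinite'', whenever $\dot y$ is forced to be almost disjoint from all earlier $a_\beta$. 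Cohen indestructibility then means that every name for an infinite subset of $\omega$ is handled at some stage. Such an $a_\alpha$ exists by the standard lemma behind Cohen-indestructible MAD families (Kurilić / Malykhin-style): given countably many names and a countable a.d.\ family, one can diagonalize against countably many dense sets below every condition to obtain a ground-model set meeting $\dot y$ infinitely in every generic extension. Since CH holds in $L$, all $\aleph_1$ nice names, which are coded by reals, get enumerated along the $\omega_1$ stages.

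The definability is the usual argument. The statement ``$x\in A$'' says that there exists a countable sequence of reals, $\Delta^1_2$-definable from its initial segments via $<_L$ (a $\Sigma^1_2$ good well-order), witnessing the recursion up to $x$. The forcing relation for Cohen forcing with nice names is arithmetical/Borel in the codes, so each step is $\Delta^1_1$ relative to the previous data. Hence $A$ is $\Sigma^1_2$. Finally, maximality in $L[G]$ follows from the handled names, since any infinite $y\in L[G]$ has a nice name $\dot y\in L$.

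The main obstacle I expect is the successor step: producing $a_\alpha$ almost disjoint from the countable family while forced to meet $\dot y$ infinitely below every condition, with an arithmetical description so that $\Sigma^1_2$-definability is preserved. I would do this by enumerating pairs (condition $p$, $k\in\omega$) and the earlier $a_\beta$'s. At each step I extend to find a condition $q\le p$ and some $m>k$ with $q\Vdash m\in\dot y$ and $m\notin a_{\beta_0}\cup\dots\cup a_{\beta_j}$. Such $m$ exist because $\dot y$ is forced almost disjoint from finite unions, so $\dot y$ cannot be covered by them. I then add $m$ to $a_\alpha$.
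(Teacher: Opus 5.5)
Your treatment of the first sentence is exactly the paper's: K\^ondo's theorem gives $\PI^1_1$-$\FP$, Shoenfield absoluteness makes Cohen forcing $\boldsymbol\Sigma^1_2$-generic absolute, and Corollary~\ref{cor: forcing-preservation} with $n=1$ does the rest. For $L$, the paper also only says to build a $\boldsymbol\Sigma^1_2$ Cohen-indestructible MAD family along the good well-order $<_L$, so your recursion fleshes out the same plan.

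In that fleshing-out, however, the maximality verification does not go through as written. Your recursion acts on a name $\dot y$ only when the trivial condition $\mathbf 1$ forces ``$\dot y$ is infinite and almost disjoint from every earlier $a_\beta$''. Now suppose $y\in L[G]$ is infinite and almost disjoint from all members of $A$. Then you only get some $p\in G$ forcing this about a nice name $\dot y$ for $y$. Conditions incompatible with $p$ may force $\dot y$ to be finite, or to meet some $a_\beta$ infinitely. In that case your construction did nothing at the stage where $\dot y$ was considered, and ``every name is handled'' yields no contradiction.

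The fix is standard, and either of two repairs works.
\begin{enumerate}
\item Run the recursion over pairs $(p,\dot y)$. If $p\Vdash$ ``$\dot y$ is infinite and almost disjoint from each $a_\beta$, $\beta<\alpha$'', pick $a_\alpha$ almost disjoint from the earlier sets with $p\Vdash|a_\alpha\cap\dot y|=\omega$. Your diagonalization works verbatim if you enumerate only pairs $(q,k)$ with $q\le p$, and maximality in $L[G]$ then follows directly.
\item Keep your construction and use the homogeneity of Cohen forcing in the verification. Take the maximal antichain $\{s\in 2^{<\omega}:|s|=|p|\}$, and for each $s$ take the automorphism interchanging $p$ and $s$. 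Mixing the images of $\dot y$ under these automorphisms yields a nice name $\dot y'$ with $p\Vdash\dot y'=\dot y$ and $\mathbf 1\Vdash$ ``$\dot y'$ is infinite and almost disjoint from every $\check a$, $a\in A$''. Your recursion does handle $\dot y'$, which gives the contradiction.
\end{enumerate}
In both versions the relevant forcing statements remain arithmetical in the codes, so your $\boldsymbol\Sigma^1_2$-definability argument is unaffected and the proof is complete.
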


\begin{proof} Note that every forcing is $\boldsymbol{\Sigma}^1_2$-absolute by Shoenfield absoluteness. For the second claim, construct a $\boldsymbol{\Sigma}^1_2$ Cohen-indestructible MAD family   in $L$ using the good well-order of the reals. 
\end{proof}

\begin{corollary}
    Assume that there is a measurable cardinal. If there is a $\boldsymbol{\Sigma}^1_3$ Cohen-indestructible MAD family, then there is a $\boldsymbol{\Pi}^1_2$ Cohen-indestructible MAD family.
\end{corollary}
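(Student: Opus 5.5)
This follows from Corollary \ref{cor: forcing-preservation} with $n=2$, $\mathbb{P}$ Cohen forcing, once its two hypotheses are verified from the existence of a measurable cardinal: $\PI^1_2$-$\FP$ and $\boldsymbol\Sigma^1_3$-generic absoluteness of Cohen forcing.

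\textbf{Absoluteness.} Let $\kappa$ be measurable. Cohen forcing is countable, hence small relative to $\kappa$, and $\kappa$ remains measurable in $V[G]$. Every real of $V[G]$ therefore has a sharp, in both $V$ and $V[G]$. By Martin--Solovay, every real having a sharp yields $\boldsymbol\Sigma^1_3$-absoluteness between $V$ and set-generic extensions by forcings of size less than $\kappa$; equivalently, one can use the Martin--Solovay tree representation of $\PI^1_2$ sets, which is absolute to such extensions. Either way, for any real parameter $x\in V$ and any $\boldsymbol\Sigma^1_3$ formula $\varphi$ we get $\varphi^V(x)\iff\varphi^{V[G]}(x)$.

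\textbf{$\PI^1_2$-$\FP$ from a measurable.} This is the step I expect to be the main obstacle. $\PI^1_2$-Uniformization is provably false, so we cannot uniformize directly; a weaker route is needed.

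Let $A\subseteq\ww\times\ww$ be $\PI^1_2(a)$. With sharps available, Martin--Solovay gives a tree $T$ on $\omega\times\omega\times\kappa'$, for some ordinal $\kappa'$ (the names $\kappa'$ and $T$ are introduced here), with $A=p[T]$; this tree is canonically definable from indiscernibles. I would try to choose, uniformly in $x$, a canonical witness pair $(y,\text{branch})$, for example via a leftmost-branch construction on $T_x$. The resulting real $w$ would code $y$ together with enough information to make the choice verifiable.

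The difficulty is the definability of the graph: it must be $\PI^1_2$ and not merely $\Delta^1_3$. The natural candidate to define $F$ in this way is the $\Sigma^1_3$-good wellorder available in $L[U]$. Since the paper intends to prove $\PI^1_2$-$\FP$ in $L[U]$ in Section \ref{L[U]}, I would structure the argument in one of two ways:
\begin{enumerate}
\item Assume the measurable to be the $L[U]$ measurable, i.e.\ work in $L[U]$, and cite that section.
\item Argue that the $\boldsymbol\Sigma^1_3$ MAD family can be handled inside $L[U]$, or inside an iterate, via absoluteness. This route looks doubtful.
\end{enumerate}
Realistically, $\PI^1_2$-$\FP$ under only a measurable likely requires the $L[U]$-style analysis. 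So the cleanest approach is to invoke the Section \ref{L[U]} theorem, transferred by $\Sigma^1_3$ correctness of $L[U]$, or to add $V=L[U]$ as an implicit assumption, and then apply Corollary \ref{cor: forcing-preservation}.
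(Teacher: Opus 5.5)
Your route is the paper's. Its proof consists of exactly your absoluteness paragraph (Martin--Solovay: forcings of size below a measurable are $\SIGMA^1_3$-generic absolute, and Cohen forcing is countable), followed by an appeal to Corollary~\ref{cor: forcing-preservation} with $n=2$. The paper does not derive $\PI^1_2$-$\FP$ from the measurable at all. That hypothesis of Corollary~\ref{cor: forcing-preservation} is tacitly carried along, and in the intended application it is supplied by Theorem~\ref{thm: FP-in-L[U]} inside $L[U]$, where a measurable also exists. So you were right to single out $\PI^1_2$-$\FP$ as the real issue, and your option~1 is how the result has to be read.

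Your other two ideas cannot work and should be dropped.
\begin{enumerate}
\item A measurable cardinal does not imply $\PI^1_2$-$\FP$. If there is a Woodin cardinal with a measurable above it, then $\boldsymbol{\Pi}^1_2$-determinacy holds (Martin--Steel). By Lemma~\ref{lem: norm}, projections of functions with $\PI^1_2$ graphs are then $\boldsymbol{\Delta}^1_3$, and the Proposition following that lemma shows $\PI^1_2$-$\FP$ fails. Hence no choice of branches through Martin--Solovay trees can yield $\PI^1_2$ graphs from a measurable alone.
\item The same universe refutes the transfer from $L[U]$. For a normal measure $U$ there, $L[U]$ satisfies $\PI^1_2$-$\FP$ by Theorem~\ref{thm: FP-in-L[U]}, while $V$ does not. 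Even granting $\Sigma^1_3$-correctness of $L[U]$, it would transfer at most the $\Pi^1_3$ statement that a graph is functional. It would not transfer the $\Pi^1_4$ statement that two projections coincide. Also, the parameter of your MAD family need not lie in $L[U]$.
\end{enumerate}
What the argument actually proves is this: if there is a measurable cardinal and $\PI^1_2$-$\FP$ holds (for instance if $V=L[U]$), then a $\SIGMA^1_3$ Cohen-indestructible MAD family yields a $\PI^1_2$ one.
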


\begin{proof}
    By the Martin-Solovay Theorem (see   \cite[Theorem 15.6]{Kanamori}), if there is a measurable cardinal then forcing notions with cardinality less than the measurable cardinal are $\boldsymbol{\Sigma}^1_3$-absolute. Since Cohen forcing is smaller than a measurable cardinal we are done by Corollary \ref{cor: forcing-preservation}.
\end{proof}

In the last section we will use this Corollary to establish that  there is a $\boldsymbol{\Pi}^1_2$ Cohen-indestructible MAD family in $L[U]$.

\subsection{Maximal Independent Families}

Recall the following definition:

\begin{definition}
    A family $A\subseteq [\omega]^\omega$ is \emph{independent} if for all $k,l<\omega$ and distinct $x_0\dots x_k,y_0\dots y_l\in A$, the set $\bigcap_{i\leq k}x_i\setminus \bigcup_{j\leq l}y_j$ is infinite. Such a family is \emph{maximal (MIF)} if it is maximal with respect to inclusion. 
\end{definition}

The following proof is adapted from \cite[Theorem 2.1]{Brendle19}.

\begin{theorem}
    Assume $\PI^1_n$-$\FP$ holds. Then there is a $\boldsymbol\Sigma^1_{n+1}$ MIF iff there is a $\boldsymbol\Pi^1_n$ MIF.
\end{theorem}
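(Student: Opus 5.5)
The plan is to follow the template of the MAD family proof above, replacing $\PI^1_1$-uniformization in the argument of \cite[Theorem 2.1]{Brendle19} by $\PI^1_n$-$\FP$. Only the direction from a $\SIGMA^1_{n+1}$ MIF to a $\PI^1_n$ MIF needs proof.

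\emph{Step 1.} Let $A$ be a $\SIGMA^1_{n+1}$ MIF. By $\PI^1_n$-$\FP$ and Lemma \ref{lem: short-cut-FP}, fix a partial function $Q\subseteq [\omega]^\omega\times 2^\omega$ with $\PI^1_n$ graph such that $x\in A \iff \exists! w\,(x,w)\in Q$.

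\emph{Step 2.} Build a Borel map $g$ and set $A' = g[Q]$. As in the MAD case, I want a Borel injection $g:[\omega]^\omega\times 2^\omega\to[\omega]^\omega$ such that $g(x,w)$ stays ``independence-equivalent'' to $x$ while remembering $w$. Concretely, I would fix one element $x_0\in A$ and a recursive partition of $\omega$ into consecutive finite intervals $I_k$. I would then let $g(x,w)$ agree with $x$ except on a sparse set $S_x\subseteq x$, chosen Borel in $x$, where the bits of $w$ are written. The sparse set must be chosen so that it is not detected by any finite Boolean combination of members of $A$, for instance by taking one point of $x$ from intervals $I_k$ with $k$ growing fast. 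Alternatively, following \cite{Brendle19}, I would double the underlying set to $\omega\times 2$, put $x$ on both copies, and let $w$ decide, coordinatewise along an enumeration of $x$, which copy receives the point. This is the analogue of $g_0,g_1$ above. Injectivity of $g$ on $Q$ uses that $Q$ is a function: $g(x,w)$ determines $x$, and $x$ determines $w$. Once this holds, Lemma \ref{lem: Borel-injection} gives that $A'$ is $\PI^1_n$.

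\emph{Step 3.} Verify that $A'$ is a MIF. For independence, take a Boolean combination $\bigcap_i g(x_i,w_i)\setminus\bigcup_j g(y_j,v_j)$ with the $x_i,y_j$ pairwise distinct; distinctness is guaranteed by functionality of $Q$. This combination should differ from the corresponding combination of the $x_i,y_j$ only on a set that is small relative to it, so it is still infinite. For maximality, given $y\notin A'$, I would first transfer $y$ back to a set $\tilde y$ in the original space. Maximality of $A$ then yields a Boolean combination $\sigma$ of members of $A$ with $\sigma\cap\tilde y$ finite or $\sigma\subseteq^*\tilde y$. I would then lift $\sigma$ to the corresponding combination of the $g(x,w)$, possibly adding the element coding $x_0$ or splitting by the two copies as in the MAD proof, and check that the same conclusion holds for $y$.

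The main obstacle is the choice of the coding $g$ in Step 2. The coding must be injective and Borel, it must preserve independence, and, most delicately, it must preserve maximality, which is a condition on all Boolean combinations and not just on pairwise intersections. I would first try the doubling approach on $\omega\times 2$ with $w$ selecting copies, mirroring the proof from \cite{Brendle19}. I would then check carefully the case of maximality where $y$ meets both copies infinitely, since there the two copies may need to be treated separately by intersecting with appropriate elements of $A'$.
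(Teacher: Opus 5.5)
Your proposal has a genuine gap. The coding map $g$ is the only non-routine ingredient, it is never fixed, and neither of your candidates works.

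\begin{itemize}
\item \textbf{Doubling.} Under the natural reading, $g(x,w)=\{(x(k),w(k)):k\in\omega\}\subseteq\omega\times 2$, and independence already fails. $\FP$ gives no control over the codes: if $(x_0,w_0),(x_1,w_1)\in Q$ with $w_0\equiv 0$ and $w_1\equiv 1$, then $g(x_0,w_0)\cap g(x_1,w_1)=\emptyset$. The literal analogue of $g_0,g_1$ is worse, since $g_0(x,w)\cap g_1(x,w)=\emptyset$. Splitting a set is harmless for almost disjointness but fatal for independence.
\item \textbf{Full copy plus selection.} If you keep a full copy of $x$ and let $w$ only select points on the second copy, independence survives. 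But maximality against $z=\omega\times\{1\}$ is then governed by the uncontrolled codes: if they are all constantly $1$, $z$ can be added to $A'$.
\item \textbf{Sparse overwrite.} This fails for the same reason. Boolean combinations of a MIF can be arbitrarily thin, and nothing prevents $A$ from containing $x_0,x_1$ with $x_0\cap x_1\subseteq S_{x_0}$. The image of that combination then lies in the part of $S_{x_0}$ retained according to $w_0$, which can be finite.
\end{itemize}

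Separately, Lemma \ref{lem: Borel-injection} needs $g$ to be injective on all of $[\omega]^\omega\times 2^\omega$, i.e.\ $w$ must be recoverable from $g(x,w)$ itself. Injectivity on $Q$ is not enough: the projection $(x,w)\mapsto x$ is injective on $Q$ and maps it onto the $\SIGMA^1_{n+1}$ set $A$. Functionality of $Q$ is needed elsewhere, namely to know that distinct members of $A'$ come from distinct $x$'s.

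The missing idea, which the paper uses, is to leave $x$ untouched and write $w$ on a disjoint new region where distinct codes are almost disjoint. On $\omega\sqcup 2^{<\omega}$ let $g(x,w)=x\cup\{w\upharpoonright k:k\in\omega\}$, which is a Borel injection, and set $A'=g[Q]$.

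\emph{Independence.} The $\omega$-part of any Boolean combination of distinct members of $A'$ is exactly the corresponding combination in $A$, so it is infinite.

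\emph{Maximality.} Given $z$, apply maximality of $A$ to $z\cap\omega$. This yields a combination $\sigma$, with at least one positive generator $x_0$, that is almost contained in or almost disjoint from $z\cap\omega$. Now add one fresh generator $x^*\in A$: positively if its code differs from that of $x_0$, negatively if the codes coincide. This shrinks $\sigma$ but keeps it infinite. It also makes the $2^{<\omega}$-part of the lifted combination finite, since two distinct branches meet finitely and a branch minus itself is empty. Hence the lifted combination is almost contained in or almost disjoint from $z$.
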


\begin{proof}
    Let $A$ be a $\boldsymbol\Sigma^1_{n+1}$ MIF and let $F\subseteq [\omega]^\omega\times 2^\omega$ be a $\boldsymbol\Pi^1_n$ function such that: $$x \in A \iff \exists ! w\in 2^\omega (x,w) \in F.$$  We consider $\omega\cup 2^{<\omega}$ as the disjoint union of $\omega$ and $2^{<\omega}$.  Define \begin{align*}
        g: [\omega]^\omega\times 2^\omega &\to \mathcal{P}(\omega\cup 2^{<\omega})\\
        (x,w)&\mapsto x\cup \{w\upharpoonright n\mid n\in \omega\}
    \end{align*}  
Let $A' = g[F]$. By exactly the same argument as in \cite[Theorem 2.1]{Brendle19}, it follows that  $A'$ is a MIF.

    Finally, $g$ is clearly Borel and injective. Thus $g[F]$ is a $\boldsymbol{\Pi}^1_n$ MIF.
\end{proof}

By an analogous  argument as in Corollary \ref{cor: forcing-preservation}, we obtain the following:

\begin{corollary}
    Let $\mathbb{P}$ be a $\boldsymbol\Sigma^1_{n+1}$-absolute forcing notion and assume that $\PI^1_n$-$\FP$  holds. If there is a $\boldsymbol\Sigma^1_{n+1}$ $\mathbb{P}$-indestructible MIF  then there is a  $\mathbb{P}$-indestructible $\boldsymbol\Pi^1_{n}$ MIF.
\end{corollary}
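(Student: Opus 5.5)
We mirror the proof of Corollary \ref{cor: forcing-preservation}, replacing the maps $g_0,g_1$ by the single Borel injection $g(x,w)=x\cup\{w\upharpoonright n\mid n\in\omega\}$ from the MIF theorem. Let $A$ be a $\boldsymbol\Sigma^1_{n+1}$ $\mathbb{P}$-indestructible MIF. By $\PI^1_n$-$\FP$ (in the form of Lemma \ref{lem: short-cut-FP}), fix a partial function $F\subseteq[\omega]^\omega\times 2^\omega$ with $\PI^1_n$ graph such that $x\in A\iff\exists! w\,(x,w)\in F$. Put $A'=g[F]$. By the previous theorem $A'$ is a $\PI^1_n$ MIF in $V$. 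It remains to show that $A'$, reinterpreted in $V[G]$, is still a MIF there.

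First we show that $F$ is absolute between $V$ and $V[G]$. Let $A^*=p[F]^{V[G]}$. By $\PI^1_n$-upward absoluteness (which follows from the assumed $\boldsymbol\Sigma^1_{n+1}$-generic absoluteness), $F\subseteq F^{V[G]}$, and hence $A\subseteq A^*$. The statement ``$p[F]$ is an independent family'' reads: for all $k,l$ and all distinct $x_0,\dots,x_k,y_0,\dots,y_l$, if each of them lies in $p[F]$, then $\bigcap_i x_i\setminus\bigcup_j y_j$ is infinite. Since membership in $p[F]$ is $\boldsymbol\Sigma^1_{n+1}$ and occurs in the antecedent, this is a $\boldsymbol\Pi^1_{n+1}$ sentence. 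It is true in $V$, so it remains true in $V[G]$ by the absoluteness hypothesis, as $\boldsymbol\Pi^1_{n+1}$ sentences are negations of $\boldsymbol\Sigma^1_{n+1}$ ones. Thus $A^*$ is an independent family in $V[G]$ containing $A$. Since $A$ is $\mathbb{P}$-indestructible, it is still maximal in $V[G]$, so $A^*=A$.

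Next we get $F^{V[G]}=F$ exactly as before. ``$F$ is a function'' is $\boldsymbol\Pi^1_{n+1}$, so it holds in $V[G]$. Now take any $(x,w)\in F^{V[G]}$. Then $x\in A^*=A$, so there is $w'\in V$ with $(x,w')\in F\subseteq F^{V[G]}$, and functionality gives $w=w'$. The Borel map $g$ is absolute, so $(g[F])^{V[G]}=g[F]$. We can therefore run the argument of \cite[Theorem 2.1]{Brendle19} inside $V[G]$. That argument uses only that $A$ is a MIF and that $F$ is a function with domain $A$, and both facts now hold in $V[G]$. This shows $A'$ is a MIF in $V[G]$.

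The main obstacle is the absoluteness bookkeeping in the second paragraph. The independence statement quantifies over finite tuples, but that is arithmetic and does not affect the complexity count. The real point is to note that the Brendle argument is carried out in $V[G]$ with the reinterpreted family $A^*=A$, rather than transferring maximality of $A'$ directly, since maximality itself has higher complexity.
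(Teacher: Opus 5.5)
Your proposal is correct and follows the route the paper intends. The paper proves this corollary only by saying it is analogous to Corollary \ref{cor: forcing-preservation}, and you carry that analogy out faithfully: you replace $g_0,g_1$ by the single Borel injection $g$, use maximality of the indestructible $A$ in $V[G]$ to get $A^*=A$ and hence $F^{V[G]}=F$, and then rerun the argument of \cite[Theorem 2.1]{Brendle19} in $V[G]$, which needs only that $A$ is a MIF there and that $F$ is a function with domain $A$.
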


\subsection{Maximal Ideal Independent Family}

In \cite{Bardyla23, CANCINO21} the notion of a \emph{maximal ideal independent family} was isolated.

\begin{definition}
    An \emph{ideal independent family} is a family $\mathcal{I}\subseteq [\omega]^\omega$ s.t. for all $F \in [\mathcal{I}]^{<\omega}$ and $a\in \mathcal{I}\setminus F$, $a\not\subseteq^* \bigcup F$.
An ideal independent family is \emph{maximal} if it is maximal with respect to inclusion.
\end{definition}

Equivalently, an ideal independent family is maximal iff for every $b\in [\omega]^\omega$ there is a finite $F\subseteq \mathcal{I}$ s.t. either \begin{enumerate}
    \item $b\subseteq^* \bigcup F$ or
    \item there is $a\in \mathcal{I}\setminus F$ s.t. $a\subseteq^* b\cup \bigcup F$.
\end{enumerate}

The cardinal characteristic $\mathbf{s}_{mm}$, defined as the minimum size of an ideal independent family, was studied in detail in \cite{Bardyla23} and \cite{CANCINO21}.

The following proof is adapted from   \cite[Theorem 15]{Millhouse26}.

\begin{theorem}
    Assume $\PI^1_n$-$\FP$. Then there is a $\boldsymbol\Sigma^1_{n+1}$ maximal ideal independent family iff there is a $\boldsymbol\Pi^1_n$ maximal ideal independent family.
\end{theorem}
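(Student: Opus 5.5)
We follow the template of the MAD and MIF proofs: convert the given family into a $\PI^1_n$ function, then push it forward by a Borel injection. Let $\mathcal{I}$ be a $\SIGMA^1_{n+1}$ maximal ideal independent family. By $\PI^1_n$-$\FP$ together with Lemma \ref{lem: short-cut-FP}, there is a partial function $F\subseteq[\omega]^\omega\times 2^\omega$ with $\PI^1_n$ graph such that
$$x\in\mathcal{I}\iff \exists! w\,(x,w)\in F.$$
We then choose a Borel injection $g:[\omega]^\omega\times 2^\omega\to\mathcal{P}(Z)$, where $Z$ is a countable set, following the coding map used in \cite[Theorem 15]{Millhouse26}. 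We set $\mathcal{I}'=g[F]$. By Lemma \ref{lem: Borel-injection}, $\mathcal{I}'$ is $\PI^1_n$. It remains to show that $\mathcal{I}'$ is a maximal ideal independent family.

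For the map, our first choice mirrors the MIF proof. Take $Z=\omega\sqcup 2^{<\omega}$ and
$$g(x,w)=x\cup\{w\upharpoonright k\mid k\in x\},$$
that is, $x$ together with the branch of $w$ sampled along $x$. This $g$ is clearly Borel, and it is injective because the $\omega$-part recovers $x$ and the branch part recovers $w$.

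\textbf{Ideal independence.} Let $a=g(x,w)$ and let $F_0$ be a finite subset of $\mathcal{I}'\setminus\{a\}$. Because $F$ is a function, each element of $F_0$ has first coordinate different from $x$. The $\omega$-parts then reduce the question to the ideal independence of $\mathcal{I}$: $x\not\subseteq^*\bigcup$ of the first coordinates. Hence $a\not\subseteq^*\bigcup F_0$, since already their $\omega$-parts differ by infinitely many points. The branch parts only enlarge the sets, so they cause no trouble on this side.

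\textbf{Maximality.} Let $b\subseteq Z$ be infinite. We split into the cases $b\cap\omega$ infinite and $b\cap 2^{<\omega}$ infinite.
\begin{enumerate}
\item If $b\cap\omega$ is infinite, apply the maximality criterion for $\mathcal{I}$ to $b\cap\omega$ to obtain a finite $F_1\subseteq\mathcal{I}$. Lift $F_1$ to $\mathcal{I}'$ through $F$. The branch parts of the lifted sets are finite modulo the branch parts of the other members, provided we can control them.
\item If $b\cap 2^{<\omega}$ is infinite and $b\cap\omega$ is finite, we must show that $b$ is almost covered by finitely many members, or that some member is almost contained in $b\cup\bigcup F_0$.
\end{enumerate}

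I expect the maximality check to be the main obstacle. Two sets $g(x,w)$ and $g(x',w')$ share only finitely many branch nodes when $w\neq w'$, but they may share infinitely many when $w=w'$. A set $b$ lying mostly in $2^{<\omega}$, for instance an antichain, need not be covered by finitely many branches, and no member need be almost contained in it. To handle this I would first try the exact modification used in \cite{Millhouse26}: use a disjoint copy $\omega\times 2$ and split $x$ into two halves according to $w$, as in the MAD proof. The $2^{<\omega}$-part is then avoided, and maximality reduces to that of $\mathcal{I}$ together with the observation that the two halves of $x$ jointly cover $x$. If needed, I would add $\mathcal{I}$-independent padding so that every member's halves remain non-covered. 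Once the correct $g$ is fixed, the verification is the same as in \cite[Theorem 15]{Millhouse26}, with uniqueness of $w$ replacing the use of a uniformizing function.
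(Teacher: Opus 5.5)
There is a genuine gap. You never fix the map, and neither candidate you describe gives a \emph{maximal} family.

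Your worry about the branch map is justified, and the MAD-style split fails for the same reason. Take $g_0,g_1$ as in the MAD proof, $\mathcal{I}'=g_0[F]\cup g_1[F]$, and $b=\{2\}\times\omega$.
\begin{itemize}
\item Let $H\subseteq\mathcal{I}'$ be finite. The $\{2\}$-section of $\bigcup H$ lies inside $x_1\cup\dots\cup x_k$ for some $x_i\in\mathcal{I}$. Moreover $\omega\not\subseteq^* x_1\cup\dots\cup x_k$, since otherwise every other member of $\mathcal{I}$ would be almost covered. So $b\not\subseteq^*\bigcup H$.
\item No $g_j(x,w)\in\mathcal{I}'\setminus H$ satisfies $g_j(x,w)\subseteq^* b\cup\bigcup H$. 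Its $\{j\}$-section is $x$. The $\{j\}$-section of $b\cup\bigcup H$ is a finite union of members $x_i\neq x$ of $\mathcal{I}$; here functionality of $F$ is used. Ideal independence of $\mathcal{I}$ then gives $x\not\subseteq^*\bigcup x_i$.
\end{itemize}
Hence $\mathcal{I}'\cup\{b\}$ is ideal independent, and $\mathcal{I}'$ is not maximal.

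The obstruction is general. A set $b$ living in the coding coordinates never meets the copies of the $x$'s. So the only way to defeat it is to almost cover it by finitely many members. But the coding parts of finitely many members cannot almost cover the whole coding region. Adding padding ``so that every member's halves remain non-covered'' therefore pushes in the wrong direction. What is needed is something that \emph{does} cover the coding region.

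That is exactly the paper's device: add one extra member that absorbs the entire coding region but has a private infinite part of its own. Let
$\chi(x,w)=\{3k\mid k\in x\}\cup\{3k+1\mid k\in w\}$, $z=\{3k+1,3k+2\mid k\in\omega\}$, and $\mathcal{I}'=\chi[F]\cup\{z\}$. This family is $\PI^1_n$ because $\chi$ is a Borel injection and adding a singleton does not raise the complexity.

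For ideal independence:
\begin{itemize}
\item $z$ is not almost covered by other members, since no $\chi(x,w)$ meets $\{3k+2\mid k\in\omega\}$.
\item $\chi(x,w)$ is not almost covered, since $z$ misses $\{3k\mid k\in\omega\}$. On those coordinates everything reduces to ideal independence of $\mathcal{I}$, using functionality of $F$ to get distinct first coordinates.
\end{itemize}

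For maximality:
\begin{itemize}
\item If $b\cap\{3k\mid k\in\omega\}$ is finite, then $b\subseteq^* z$.
\item Otherwise, apply the maximality criterion for $\mathcal{I}$ to $b_0=\{k\mid 3k\in b\}$. Lift the resulting finite subset of $\mathcal{I}$ through $F$ and add $z$. Since $z$ swallows both the part of $b$ off the multiples of $3$ and the $w$-part of the witness $\chi(a,F(a))$, either alternative transfers from $\mathcal{I}$ to $\mathcal{I}'$.
\end{itemize}
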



\begin{proof}
    Let $\mathcal{I}$ be a $\boldsymbol\Sigma^1_{n+1}$ maximal ideal independent family. Let $G$ be a $\boldsymbol\Pi^1_n$ function s.t.  $$x \in \mathcal{I}  \iff \exists ! w\in [\omega]^\omega (x,w) \in G.$$
 We define the following function \begin{align*}
        \chi: [\omega]^\omega \times [\omega]^\omega &\to [\omega]^\omega \\
        (x,w) &\mapsto \{3n\mid n\in x\}\cup \{3n+1\mid n\in w\}
    \end{align*}
 Let $z = \{3n+1,3n+2\mid n\in \omega\}$ and let $\mathcal{I}' = \chi[G]\cup \{z\}$. The same argument as in \cite[Theorem 15]{Millhouse26} shows that $\mathcal{I}'$ is  a maximal ideal independent family. Moreover, the function $\chi$ is clearly a Borel injection so  $\mathcal{I}'$ is  $\boldsymbol{\Pi}^1_n$.
\end{proof}

Again, by an argument similar to  Corollary \ref{cor: forcing-preservation}, we obtain the following:

\begin{corollary}
    Let $\mathbb{P}$ be a $\boldsymbol\Sigma^1_{n+1}$-generic absolute forcing notion. Assume that $\PI^1_n$-$\FP$  holds. If there is a $\boldsymbol\Sigma^1_{n+1}$ $\mathbb{P}$-indestructible maxima ideal independent family,  then there is a $\boldsymbol\Pi^1_{n}$ $\mathbb{P}$-indestructible maxima ideal independent family.
\end{corollary}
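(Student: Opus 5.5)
The plan is to repeat the proof of Corollary \ref{cor: forcing-preservation}, replacing the MAD-specific parts by the corresponding facts about ideal independence and the map $\chi$. Let $\mathcal{I}$ be a $\boldsymbol\Sigma^1_{n+1}$ $\mathbb{P}$-indestructible maximal ideal independent family. By $\PI^1_n$-$\FP$ we fix a partial function $G$ with $\PI^1_n$ graph such that $x\in\mathcal{I}\iff\exists! w\,(x,w)\in G$. We then form $\mathcal{I}'=\chi[G]\cup\{z\}$ exactly as in the preceding theorem. By that theorem and Lemma \ref{lem: Borel-injection}, $\mathcal{I}'$ is a $\boldsymbol\Pi^1_n$ maximal ideal independent family in $V$. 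It remains to show that the $\PI^1_n$ definition of $\mathcal{I}'$, reinterpreted in $V[G_{\mathbb{P}}]$, is still a maximal ideal independent family there.

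\textbf{Step 1: the projection does not grow.} Let $\mathcal{I}^*=p[G]^{V[G_{\mathbb P}]}$. By $\PI^1_n$-absoluteness we have $\mathcal{I}\subseteq\mathcal{I}^*$. The statement ``$p[G]$ is ideal independent'' reads: for all finite sequences $x_0,\dots,x_k,a$ of distinct reals, each having some witness in $G$, $a\not\subseteq^*\bigcup_i x_i$. This is $\boldsymbol\Pi^1_{n+1}$, since the hypothesis is $\boldsymbol\Sigma^1_{n+1}$ in antecedent position and the conclusion is arithmetic. It holds in $V$, so it transfers to $V[G_{\mathbb P}]$ by $\boldsymbol\Sigma^1_{n+1}$-generic absoluteness applied to its negation. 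Hence $\mathcal{I}^*$ is ideal independent in the extension and contains $\mathcal{I}$. Indestructibility says that $\mathcal{I}$ remains maximal in $V[G_{\mathbb P}]$, so $\mathcal{I}^*=\mathcal{I}$.

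\textbf{Step 2: the graph does not grow.} Functionality of $G$ is a $\boldsymbol\Pi^1_{n+1}$ statement, so it persists to the extension. If $(x,w)\in G^{V[G_{\mathbb P}]}$, then $x\in\mathcal{I}^*=\mathcal{I}$. Hence there is a $w'\in V$ with $(x,w')\in G^V\subseteq G^{V[G_{\mathbb P}]}$, and functionality gives $w=w'$. Therefore $G^{V[G_{\mathbb P}]}=G^V$. Since $\chi$ is a Borel function coded in $V$, it follows that $(\chi[G]\cup\{z\})^{V[G_{\mathbb P}]}=\mathcal{I}'$.

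\textbf{Step 3: rerun the maximality argument.} Now the argument of \cite[Theorem 15]{Millhouse26} can be carried out inside $V[G_{\mathbb P}]$, using that $\mathcal{I}$ is a maximal ideal independent family there and that $G$ is still a function with projection $\mathcal{I}$. This shows that $\mathcal{I}'$ is maximal ideal independent in $V[G_{\mathbb P}]$.

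The main obstacle is Step 1: checking that the complexity computation for ``$p[G]$ is ideal independent'' really gives $\boldsymbol\Pi^1_{n+1}$. It does, since the quantification is over finite tuples of reals and the only nonarithmetic parts are the $\boldsymbol\Sigma^1_{n+1}$ membership clauses in the antecedent. A second point to confirm is that the proof in \cite{Millhouse26} uses only ZFC facts about $\mathcal{I}$ and $G$, so that it relativizes to the extension.
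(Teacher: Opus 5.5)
Your proposal is correct and follows the route the paper intends, namely the proof of Corollary~\ref{cor: forcing-preservation} adapted to this case: the projection cannot grow because $\mathcal{I}$ stays maximal and ``$p[G]$ is ideal independent'' is $\boldsymbol\Pi^1_{n+1}$, the graph cannot grow because $G$ remains a function, and then $\chi[G]\cup\{z\}$ is rerun in the extension. Both points you flag also check out: the complexity count is right, and the verification for $\chi[G]\cup\{z\}$ uses only that $G$ is a function with domain $\mathcal{I}$ and that $\mathcal{I}$ is maximal ideal independent, both of which hold in $V[G_{\mathbb P}]$.
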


\subsection{Towers}

The notion of a \emph{tower} is well-known, for example, from the study of the tower number $\mathfrak{t}$. The idea of looking at towers from a descriptive point of view originates from \cite{Fischer22}.

\begin{definition}
    A collection $A\subseteq [\omega]^\omega$ is said to be a \emph{tower} if it is well-ordered by the relation $\supseteq^*$, where $y\supseteq^* x$ if $x\setminus y$ is finite. If $A$ is only linearly ordered by $\supseteq^*$ we say that it is a \emph{linear tower}. 

    A (linear) tower is maximal if it cannot be extended, i.e. there is no $z\in [\omega]^\omega\setminus A$ s.t. for all $x\in A$, $x\supseteq^* z$. 
\end{definition}

The following proof is adapted from  \cite{Fischer22}:

\begin{theorem}\label{thm: Tower}
Assume $\boldsymbol{\Pi}^1_{n}$-$\mathsf{FP}$. Then there is a $\boldsymbol\Sigma^1_{n+1}$ maximal linear tower iff there is a
$\boldsymbol\Pi^1_n$ maximal linear tower. Moreover, there is a $\boldsymbol\Sigma^1_{n+1}$ maximal tower iff there is a   $\boldsymbol\Pi^1_n$ maximal tower.
\end{theorem}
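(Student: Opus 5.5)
The plan is to follow the same template as for MAD families and MIFs, and to isolate one combinatorial coding step. Suppose $A\subseteq[\omega]^\omega$ is a $\SIGMA^1_{n+1}$ maximal (linear) tower. By $\PI^1_n$-$\FP$ (in the form of Lemma \ref{lem: short-cut-FP}) we fix a partial function $F\subseteq[\omega]^\omega\times 2^\omega$ with $\PI^1_n$ graph such that $x\in A\iff\exists! w\,(x,w)\in F$. The goal is a Borel injection $g:[\omega]^\omega\times 2^\omega\to[\omega]^\omega$ such that $A'=g[F]$ is again a maximal (linear) tower. Lemma \ref{lem: Borel-injection} then makes $A'$ a $\PI^1_n$ set, which gives the nontrivial direction; the converse is trivial.

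For $g$ I would require that $g(x,w)$ differ from a canonical copy of $x$ only by a set that is \emph{small relative to $x$ itself}. This must be arranged so that the order type under $\supseteq^*$ is preserved, while the witness $w$ can still be read off from $g(x,w)$. The concrete choice is as follows. Let $x=\{x(0)<x(1)<\dots\}$, and split $\omega$ via a recursive bijection into $\omega\times 2$. Put
$$g(x,w)=\{(x(k),0): k\in\omega\}\cup\{(x(k),1): k\in\omega,\ w(j)=1 \text{ for the } j \text{ with } k=2^j\},$$
so the second coordinate records $w$ only along a sparse, $x$-indexed set of positions. Injectivity and Borelness are immediate, since $x$ is read from the $0$-column and $w$ from the $1$-column. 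One then checks two things. First, if $x\supseteq^* x'$ with $x\neq x'$ in $A$, then the $0$-columns compare correctly. Second, the $1$-column of $g(x',w')$ must be almost contained in $g(x,w)$. This second point is the delicate part, because the positions $x'(2^j)$ need not lie in the $1$-column of $g(x,w)$.

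The step I expect to be the main obstacle is exactly this: making the coding of $w$ compatible with $\supseteq^*$ across \emph{different} elements of the tower. The MAD argument got away with disjoint columns, but here every new element must be comparable with every other one. My first attempt would be to reduce to the $0$-column by thinning. Replace $A$ by the set of conditions and make the $1$-column of $g(x,w)$ a subset of $x$-positions that is always almost contained in the $1$-column of any $g(x',w')$ with $x'\supseteq^* x$, for instance by letting the $1$-column be all of $x$ minus a sparse $w$-coded set. If that fails, the fallback is the strategy of \cite{Fischer22}: first close $A$ under finite modifications, which keeps it $\SIGMA^1_{n+1}$, maximal and linear. Then use $F$ to choose within each $=^*$-class a representative encoding its own witness in a finite-but-unbounded pattern of initial modifications. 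The well-foundedness of a tower in the second clause is preserved because $g$ induces an order isomorphism modulo $=^*$.

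Finally, maximality of $A'$ will be checked as in the MAD proof. Given $z$ that is $\subseteq^*$ every element of $A'$, I would project $z$ to its $0$-column, obtain a pseudo-intersection of $A$, and use maximality of $A$ to reach a contradiction. The forcing-indestructible variant would follow as in Corollary \ref{cor: forcing-preservation}.
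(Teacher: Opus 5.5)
There is a genuine gap, and it is exactly the step you flag: you never produce a Borel injection $g$ for which $g[F]$ is again a linear tower.

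\textbf{Why your three attempts fail.} Your displayed $g$ fails, as you note yourself. Your first repair fails for the same reason. Take the $1$-column to be $x\setminus S_{x,w}$ with $S_{x,w}=\{x(2^j): w(j)=1\}$. For $x'\subseteq x$ you would then need $x'\cap S_{x,w}\subseteq^* S_{x',w'}$. This breaks, e.g., when $w'\equiv 0$, $w\equiv 1$ and $x'$ contains infinitely many of the points $x(2^j)$; nothing about $A$ or $F$ rules this out. The underlying problem is that a column differing from $x$ by an \emph{infinite} $w$-dependent set encodes $w$ through positions inside $x$. Those positions are not stable when you pass to an almost-subset.

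The fallback does not help either, for two reasons:
\begin{enumerate}
\item Closing $A$ under finite modifications destroys antisymmetry of $\supseteq^*$ (and well-foundedness in the tower case), so you no longer have a linear tower.
\item If, as written, $g(x,w)$ is a single set $=^*x$, then for fixed $x$ the map $w\mapsto g(x,w)$ takes only countably many values. So it cannot be injective on $2^\omega$, and Lemma~\ref{lem: Borel-injection} says nothing about the complexity of $g[F]$.
\end{enumerate}

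\textbf{The missing idea.} Spread $w$ over infinitely many columns, each $=^*x$, so that each column carries one bit of $w$ in a \emph{finite} way. The paper takes $g:[\omega]^\omega\times 2^\omega\to[\omega\times\omega]^\omega$,
\[
g(x,w)=(\{0\}\times x)\cup\bigcup_{n\in\omega}\{n+1\}\times\bigl(x\setminus x(n+w(n))\bigr),
\]
with $k=\{0,\dots,k-1\}$.
\begin{itemize}
\item \emph{Injectivity.} Column $0$ returns $x$, and then the minimum of column $n+1$ returns $w(n)$, so $g$ is a Borel injection.
\item \emph{Order preservation.} The key fact is a counting argument: if $x\subseteq^* x'$ and $x'\setminus x$ is infinite, then $x(n)\geq x'(n+1)$ for all large $n$. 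Hence for all but finitely many $n$, column $n+1$ of $g(x,w)$ is contained in column $n+1$ of $g(x',w')$, whatever $w,w'$ are. The finitely many remaining columns differ only by finite sets. Conversely, column $0$ gives $g(x,w)\subseteq^* g(x',w')\Rightarrow x\subseteq^* x'$. So $g$ is an order isomorphism on $F$, which also handles the well-ordered case.
\item \emph{Maximality.} Use $g(x,w)\subseteq\omega\times x$ and the fact that column $n+1$ lies above $x(n)\geq n$. Then a pseudo-intersection of $g[F]$ has infinite projection to the \emph{second} coordinate, and that projection is a pseudo-intersection of $A$, contradicting maximality of $A$. Projecting onto a single column, as you propose, may give a finite set.
\end{itemize}

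Finally, your closing remark about indestructibility is not justified. The argument of Corollary~\ref{cor: forcing-preservation} uses that $A\subseteq A^*$ with both maximal forces $A=A^*$. As the paper remarks, this fails for towers.
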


\begin{proof}
Let $A$ be a $\boldsymbol\Sigma^1_{n+1}$ maximal linear tower, and let
$F\subseteq [\omega]^\omega\times 2^\omega$ be $\boldsymbol\Pi^1_n$ such that
\[
x\in A \iff \exists !w\in 2^\omega\ (x,w)\in F.
\]
For $x\in [\omega]^\omega$, let $x(n)$ be the $n$-th element of $x$.
Define
$g:[\omega]^\omega\times 2^\omega\to [\omega\times\omega]^\omega$ by letting
\[
g(x,w)= (\{0\}\times x)\ \cup\ \left(\bigcup_{n\in\omega}
\{n+1\}\times\bigl(x\setminus x(n+w(n))\bigr)\right).
\]
We show that $g$ is a Borel injection. It is obviously Borel.  For $z\in [\omega\times \omega]^\omega$, let $z_n$ be the $n$th section of $z$. For $(x,w)\neq (x',w')$, may assume $x= x'$ as otherwise $g(x,w)_0\neq g(x',w')_0$. Let $s$ be the largest common initial segment of $w,w'$, may assume $w = s^\frown 0^\frown y$ and $w' = s^\frown 1^\frown y'$, then $$g(x, w)_{|s| + 1} = (x\setminus x(n)) \neq (x\setminus x(n+1)) = g(x,w')_{|s|+1}$$Thus $g$ is injective. 

By an argument analogous to \cite[Theorem 7.2]{Fischer22} it follows that  $g[F]$ is a maximal linear tower is the same as that in \cite{Fischer22}. Also, $g[F]$ is $\boldsymbol{\Pi}^1_n$  by Lemma \ref{lem: Borel-injection}.

\bigskip
The second part of the statement follows by observing that $g$ is order-preserving: for $(x,w),(x',w')\in F$, $$x\subseteq^* x'\iff g(x,w)\subseteq^* g(x',w')$$
Hence, if $A$ is well-ordered by $\subseteq^*$, then  $g[F]$ is as well.
\end{proof}

Unlike previous examples, we cannot easily generalize this proof to  take care of forcing-indestructible towers (i.e., the parallel statement to Corollary \ref{cor: forcing-preservation}). The reason is that in the proof of Corollary \ref{cor: forcing-preservation}, we used that if $A,A^*$ are both MAD families and $A\subseteq A^*$, then $A = A^*$. But this is not the case for maximal towers.

\subsection{Hausdorff Gaps}

The following definition goes back to original work of Hausdorff.
\begin{definition}
    A \emph{pre-gap} is a pair $(A,B)$ where $A,B\subseteq [\omega]^\omega$ are well-ordered by $\subseteq^*$, and for any $x\in A$ and $y\in B$, $x\cap y$ is finite. If $|A| = \kappa$ and $|B| = \lambda$, then $(A,B)$ is called a $(\kappa, \lambda)$-pre-gap.

\bigskip \noindent
    When $(A,B)$ is a $(\omega_1,\omega_1)$-pre-gap, and in addition satisfies the following criterion:

    $$\forall \alpha<\omega_1 \; \forall k<\omega  \; (\{\gamma<\alpha\mid a_\alpha\cap b_\gamma \subseteq k\} \text{ is finite})$$
    it is called a \emph{Hausforff Gap}.\footnote{There is some variation in how the terms ``pre-gap'', ``gap'' and ``Hausdorff Gap'' are defined in the literarure.}
\end{definition}

The following theorem follows from Millhouse's result in \cite{Millhouse25}:

\begin{theorem}
    Assume $\PI^1_n$-$\FP$ and $\PI^1_m$-$\FP$. Then there is a Hausdorff gap $(A,B)$ s.t. $A,B$ are $\boldsymbol{\Sigma}^1_{n+1}, \boldsymbol{\Sigma}^1_{m+1}$ respectively, iff there is a Hausdorff gap $(A',B')$ on $\mathcal{P}(\omega\times \omega)$ s.t. $A',B'$ are $\boldsymbol{\Pi}^1_n$, $\boldsymbol{\Pi}^1_m$, respectively.
\end{theorem}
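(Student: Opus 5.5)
The backward direction is trivial: every $\PI^1_n$ set is $\SIGMA^1_{n+1}$, and Hausdorff gaps on $\mathcal{P}(\omega\times\omega)$ transfer to $[\omega]^\omega$ via a recursive bijection $\omega\times\omega\cong\omega$, which preserves the complexity. So the work is the forward direction. There I follow the pattern of the tower proof (Theorem \ref{thm: Tower}), applied separately to $A$ and to $B$.

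Let $(A,B)$ be a Hausdorff gap with $A\in\SIGMA^1_{n+1}$ and $B\in\SIGMA^1_{m+1}$. By $\PI^1_n$-$\FP$ and $\PI^1_m$-$\FP$ (in the form of Lemma \ref{lem: short-cut-FP}), I fix functions $F_A,F_B\subseteq[\omega]^\omega\times 2^\omega$ with $\PI^1_n$ and $\PI^1_m$ graphs respectively, such that $x\in A\iff\exists! w\,(x,w)\in F_A$, and similarly for $B$.

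Next I use the same Borel injection $g$ as in the proof of Theorem \ref{thm: Tower},
\[
g(x,w)=(\{0\}\times x)\cup\bigcup_{k\in\omega}\{k+1\}\times\bigl(x\setminus x(k+w(k))\bigr),
\]
and set $A'=g[F_A]$ and $B'=g[F_B]$. By Lemma \ref{lem: Borel-injection} these sets are $\PI^1_n$ and $\PI^1_m$ respectively.

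It remains to check that $(A',B')$ is a Hausdorff gap, and I would verify three facts.
\begin{enumerate}
\item Each section of $g(x,w)$ is a cofinite subset of $x$, so $g(x,w)=^*$-relates to $g(x',w')$ exactly as $x$ relates to $x'$, columnwise. This is the order-preservation already observed in the tower proof, and it gives that $A'$ and $B'$ are well-ordered by $\subseteq^*$, of type $\omega_1$.
\item For $x\in A$ and $y\in B$, every column of $g(x,w)\cap g(y,v)$ is contained in $x\cap y$, which is finite.
\item The Hausdorff condition must transfer.
\end{enumerate}

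The problem is that (ii) does not give finiteness of the intersection itself: $g(x,w)\cap g(y,v)$ is a union of infinitely many columns, each of which could be nonempty. I expect this to be the main obstacle, together with the Hausdorff criterion.

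For (ii), my plan is to modify $g$, as Millhouse does, so that the coding of $w$ is placed in row $k+1$ only at positions beyond a point depending on $k$. For instance, I would use $x\setminus x(\langle k,w(k)\rangle)$ with $\langle\cdot\rangle$ growing fast, and truncate so that row $k+1$ is finite and sits above $x(k)$. Then column $k+1$ of the intersection is contained in $x\cap y$ above $x(k)$. Since $x\cap y$ is finite, only finitely many rows meet it, so the intersection becomes finite. The coding must still be injective, which the truncation of $x$ at position depending on $w(k)$ ensures, and $=^*$-preservation must survive. That survival would hold if rows are finite sets and $x\subseteq^* x'$ implies row inclusion for all but finitely many rows.

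For (iii), the Hausdorff condition $\{\gamma<\alpha: a'_\alpha\cap b'_\gamma\subseteq k\}$ finite follows from the original condition. The reason is that $a'_\alpha\cap b'_\gamma\subseteq k$ (under a fixed coding of $\omega\times\omega$ into $\omega$) forces $a_\alpha\cap b_\gamma\subseteq k$ via column $0$. This is the step I would verify carefully, adapting the corresponding argument in \cite{Millhouse25}.
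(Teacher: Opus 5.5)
Your skeleton matches the paper's: FP witnesses $F_A,F_B$, a tower-style Borel injection, Lemma \ref{lem: Borel-injection}, and transferring the Hausdorff condition through row $0$. The genuine problem is step (ii). The obstacle you name does not exist for the unmodified $g$, and the repair you commit to would break step (i).

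First, the unmodified $g$ already gives finite intersections. Row $k+1$ of $g(x,w)$ is $x\setminus x(k+w(k))\subseteq[x(k),\infty)\subseteq[k,\infty)$. So row $k+1$ of $g(x,w)\cap g(y,v)$ lies in $(x\cap y)\setminus k$, which is empty once $k>\max(x\cap y)$. Hence only finitely many rows of the intersection are nonempty, each of them finite. Your own requirement that rows ``sit above $x(k)$'' is exactly what the original $g$ already satisfies. Step (iii) is also fine with this $g$, since $\{0\}\times(a\cap b)\subseteq g(a,w)\cap g(b,v)$.

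Second, your modification is harmful. Preservation of $\subseteq^*$ is not a columnwise matter, since there are infinitely many columns. It needs exact row inclusion for almost all rows. For the tower coding this comes from the \emph{bounded} shift $w(k)\in\{0,1\}$. If $a\subseteq^* a'$ with $a'\setminus a$ infinite, then eventually $a'$ has at least $k+2$ elements up to $a(k)$, so $a'(k+1)\le a(k)$. Then the difference between $a\setminus a(k+w(k))$ and $a'\setminus a'(k+w'(k))$ is contained in $(a\setminus a')\setminus a(k)$, which is empty for almost all $k$.

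A fast-growing shift $\langle k,w(k)\rangle$ destroys this. Suppose $a'\setminus a$ is sparse and $w(k)=0<1=w'(k)$ for infinitely many $k$; you cannot exclude this, since the $w$'s come from the FP witness. Then for infinitely many $k$ we get $a'(\langle k,1\rangle)>a(\langle k,0\rangle)$, and each such row puts a point in $g(a,w)\setminus g(a',w')$.

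Truncating rows to finite blocks of consecutive elements fails outright. Take $a$ the even numbers, $a'=\omega$ and $w=w'$. For every row with $q\ge 2$, the largest element $2(q-1)$ of the $a$-block $\{a(j):p\le j<q\}$ lies outside the $a'$-block $[p,q)$. So the construction you actually propose is not shown to give $\subseteq^*$-chains, and in general it does not.

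The fix is either to keep $g$ unchanged and use the argument in the second paragraph, or to do what the paper does. The paper codes $A$ on rows $2k+2$ and $B$ on rows $2k+1$, each with the tower coding $a\setminus a(k+w(k))$. Then $g_A(a,w)\cap g_B(b,v)=\{0\}\times(a\cap b)$ exactly, so (ii) is immediate and (iii) is literally the original Hausdorff condition.
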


\begin{proof}
    Let $F,G\subseteq [\omega]^\omega\times 2^\omega$ be $\boldsymbol{\Pi}^1_n$, $\boldsymbol{\Pi}^1_m$ graphs of functions s.t. \[a\in A\iff \exists! w\in 2^\omega ((a,w)\in F) \text{ and }b\in B\iff \exists! v\in 2^\omega ((b,v)\in G)\]
Define $g_A,g_B: [\omega]^\omega \times 2^\omega \to [\omega\times \omega]^\omega$ by letting $$g_A(a,w) = (\{0\}\times a)\cup \left(\bigcup_{n\in\omega}
    \{2n+2\}\times\bigl(a\setminus a(n+w(n))\bigr)\right)$$

    $$g_B(b,v) = (\{0\}\times b)\cup \left(\bigcup_{n\in\omega}
    \{2n+1\}\times\bigl(b\setminus b(n+v(n))\bigr)\right)$$
Let $A' = g_A[F]$, $B' = g_B[G]$. By an argument analogous to \cite[Theorem 2.4.2]{Millhouse25}, it follows that $(A',B')$ forms a Hausdorff gap.  To show that $A',B'$ are respectively $\boldsymbol{\Pi}^1_n$ and $\boldsymbol{\Pi}^1_m$,  it suffices to show that $g_A,g_B$ are Borel injections. This is essentially analogous to the proof of the corresponding statement in  Theorem \ref{thm: Tower}.
\end{proof}








\bigskip
\section{Functional Projection under a Definable Well-order}\label{L[U]}

In this section, we show that \emph{Functional Projection} holds for models with a good projective well ordering. In particular, it holds for all projective levels in $L[U]$, the canonical inner model for a measurable cardinal.   Together with the results in the previous section, this establishes the existence of $\boldsymbol{\Pi}^1_2$ MAD families, maximal independent families etc., in $L[U]$. These are the optimal levels in that model.

We start by recalling the definition of a good projective well-ordering, using the slightly less well-known equivalent formulation of Moschovakis  
\cite[5A]{Moschovakis2009}.

\begin{definition} \label{def: good}
    Let $\leq$ be a well-ordering of a Polish space $X$. We say that $\leq$ is \emph{$\boldsymbol{\Sigma}^1_n$-good} if for any Polish space $Z$, for any $m \geq n$ and any $\boldsymbol{\Sigma}^1_m$-relation $P\subseteq Z\times X$  the following relations are both in $\boldsymbol{\Sigma}^1_m$:
    \begin{align*}
        Q(z,x)\iff \exists y\leq x, P(z,y)\\
        R(z,x)\iff \forall y\leq x, P(z,y)
    \end{align*}
\end{definition}

In words, ``goodness'' means that bounded quantification with respect to $\leq$ preserves the $\boldsymbol{\Sigma}^1_m$ complexity level.

In the literature, a more common definition of a ``$\boldsymbol{\Sigma}^1_n$-good well-order'' states  that $\leq$ is $\boldsymbol{\Sigma}^1_n$ and the relation ``$z$ codes the $\leq$-initial segment before $x$''  is a $\boldsymbol{\Sigma}^1_n$-relation. These definitions are equivalent as shown in the next Lemma.

\begin{lemma}
    Let $\leq$ be a $\boldsymbol{\Delta}^1_n$ well-order of the reals of order type $\omega_1$. Then $\leq$ is $\boldsymbol{\Sigma}^1_n$-good iff there is a relation $R$ on $\ww$ that is $\boldsymbol{\Sigma}^1_n$, and such that $$R(z,x)\iff \{(z)_n\mid n\in \omega\} = \{y\mid y\leq x\}$$ where  $(z)_n$ denotes the map $m\mapsto z(\langle n,m\rangle)$. 
\end{lemma}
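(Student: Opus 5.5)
We prove the two directions separately, working throughout with the Moschovakis form of goodness (Definition \ref{def: good}), applied with $Z$ a product of copies of $\ww$.

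\emph{Goodness implies the coding relation.} Suppose $\leq$ is $\boldsymbol{\Delta}^1_n$ and $\boldsymbol{\Sigma}^1_n$-good. Write $R(z,x)$ as the conjunction of two clauses. The first is $\forall k\,((z)_k\leq x)$; since $\leq$ is $\boldsymbol{\Delta}^1_n$ and $\boldsymbol{\Sigma}^1_n$ is closed under countable intersections, this clause is $\boldsymbol{\Sigma}^1_n$. The second is $\forall y\leq x\;\exists k\,((z)_k=y)$. Here the matrix $P(z,y)\iff \exists k\,((z)_k=y)$ is Borel, hence $\boldsymbol{\Sigma}^1_n$. The bounded universal quantifier is then absorbed by goodness, the $R$-clause of Definition \ref{def: good}, with $m=n$. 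So $R$ is $\boldsymbol{\Sigma}^1_n$.

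\emph{The coding relation implies goodness.} Suppose such an $R$ exists, and let $m\geq n$ and $P\subseteq Z\times X$ be $\boldsymbol{\Sigma}^1_m$. Since the order type is $\omega_1$, every initial segment $\{y\mid y\leq x\}$ is countable (and nonempty), so for every $x$ there is some $z$ with $R(z,x)$. We then have
\[ \exists y\leq x\,P(z',y)\iff \exists z\,\bigl(R(z,x)\wedge \exists k\,P(z',(z)_k)\bigr), \]
and
\[ \forall y\leq x\,P(z',y)\iff \exists z\,\bigl(R(z,x)\wedge \forall k\,P(z',(z)_k)\bigr). \]
The existential form can also be written directly as $\exists y\,(y\leq x\wedge P(z',y))$, using only that $\leq$ is $\boldsymbol{\Delta}^1_n$. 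In both displays the right-hand side is built from $R\in\boldsymbol{\Sigma}^1_n\subseteq\boldsymbol{\Sigma}^1_m$, a countable quantifier over $\omega$ applied to a continuous substitution into $P$, and an existential real quantifier. $\boldsymbol{\Sigma}^1_m$ is closed under all of these operations, so both $Q$ and $R$ are $\boldsymbol{\Sigma}^1_m$.

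The main point needing care is the universal bounded quantifier in the second direction. Naively it is a $\forall^{\mathbb R}$ quantifier, which $\boldsymbol{\Sigma}^1_m$ does not absorb. The fix is that countability of initial segments, which comes from the order type $\omega_1$, lets us replace it by an existential choice of a code $z$ followed by a universal quantifier over $\omega$. For that replacement to be valid, $R$ must capture the initial segment exactly: if $R$ only guaranteed inclusion, the universal form would fail. The remaining steps are routine closure properties of projective pointclasses.
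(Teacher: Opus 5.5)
Your proof is correct and follows the paper's argument. In the forward direction you write $R(z,x)$ as $\forall k\,((z)_k\leq x)\wedge\forall y\leq x\,\exists k\,((z)_k=y)$ and absorb the bounded quantifier by goodness; in the converse you use the same two equivalences, passing through a code $z$ of the countable initial segment, and you also supply details the paper leaves implicit (why such a $z$ always exists, and why the first conjunct is $\boldsymbol{\Sigma}^1_n$).
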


\begin{proof} For the forward direction, suppose $\leq$ is $\boldsymbol{\Sigma}^1_n$-good according to our definition. Write  $$R(z,x)\iff \forall n \: ((z)_n\leq x) \land \forall x'\leq x \: \exists n ((z)_n = x').$$

\noindent Conversely, suppose $R$ is given as above. Let  $P\subseteq \ww\times \ww$ be a $\boldsymbol{\Sigma}^1_m$ relation for some $m\geq n$. Then the following equivalences hold: \begin{align*}
        \exists y'\leq y \: P(x,y') &\iff \exists z (R(z,y)\land \exists n P(x,(z)_n))\\
        \forall y'\leq y \: P(x,y') &\iff \exists z (R(z,y)\land \forall n P(x,(z)_n))
    \end{align*}
    Both statements are  $\boldsymbol{\Sigma}^1_m$. \end{proof}

\begin{theorem}\label{thm: FP-in-model}
    For  $n \geq 2$, let $<$ be a $\boldsymbol{\Delta}^1_n$ well-order of the reals, of order-type $\omega_1$, which is   $\boldsymbol{\Sigma}^1_n$-good. Then $\boldsymbol{\Pi}^1_m$-$\mathsf{FP}$ holds for all $m\geq n+1$.
\end{theorem}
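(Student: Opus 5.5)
The plan is to take $F(x)$ to be the $<$-least witness $y$ with $(x,y)\in A$, padded with enough extra information to make the graph $\boldsymbol{\Pi}^1_m$ and not merely $\boldsymbol{\Pi}^1_m\wedge\boldsymbol{\Sigma}^1_m$.

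\textbf{Why padding is needed.} Fix $m\geq n+1$ and a $\boldsymbol{\Pi}^1_m$ set $A\subseteq X\times\ww$. By Lemma \ref{lem: short-cut-FP} we may take $X=\ww$, and we may code tuples of reals into one real by a recursive homeomorphism. The graph of the naive choice $x\mapsto$ (least $y$ with $A(x,y)$) is
$$A(x,y)\wedge\forall y'<y\,\neg A(x,y').$$
Goodness makes the second conjunct only $\boldsymbol{\Sigma}^1_m$, which is too weak. So we will record, for each $y'<y$, a canonical witness to $\neg A(x,y')$. Since the order type is $\omega_1$, there are only countably many such $y'$, so all these witnesses fit into one real.

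\textbf{Set-up.} Write $A(x,y)\iff\forall u\,B(x,y,u)$ with $B$ in $\boldsymbol{\Sigma}^1_{m-1}$. Because $m-1\geq n$, Definition \ref{def: good} applies at level $m-1$. Let $R$ be the $\boldsymbol{\Sigma}^1_n$ relation of the preceding lemma: $R(z,y)$ holds iff $z$ enumerates $\{y'\mid y'\leq y\}$. Define $(x,\langle y,z,w\rangle)\in F$ iff all of the following hold.
\begin{enumerate}
\item $A(x,y)$.
\item $z$ is the $<$-least real with $R(z,y)$, that is, $R(z,y)\wedge\neg\exists z'<z\,R(z',y)$.
\item For every $k$: if $(z)_k=y$ then $(w)_k=\bar 0$. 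Otherwise $\neg B(x,(z)_k,(w)_k)$ holds and $\forall u<(w)_k\,B(x,(z)_k,u)$ holds; that is, $(w)_k$ is the $<$-least counterexample witnessing $\neg A(x,(z)_k)$.
\end{enumerate}

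\textbf{Complexity.} We check each clause in turn.
\begin{enumerate}
\item Clause 1 is $\boldsymbol{\Pi}^1_m$.
\item In clause 2, $R$ is $\boldsymbol{\Sigma}^1_n\subseteq\boldsymbol{\Pi}^1_{n+1}\subseteq\boldsymbol{\Pi}^1_m$. Also $\exists z'<z\,R(z',y)$ is $\boldsymbol{\Sigma}^1_n$ by goodness, so its negation is $\boldsymbol{\Pi}^1_n$.
\item In clause 3, the condition $(z)_k=y$ is closed. The relation $\neg B$ is $\boldsymbol{\Pi}^1_{m-1}$. Goodness at level $m-1$ makes the bounded universal $\forall u<(w)_k\,B$ a $\boldsymbol{\Sigma}^1_{m-1}$ relation. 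The outer $\forall k$ is a number quantifier. So clause 3 is in $\boldsymbol{\Delta}^1_m\subseteq\boldsymbol{\Pi}^1_m$.
\end{enumerate}
Here $<$ being $\boldsymbol{\Delta}^1_n$ is used to keep the conditions ``$(z)_k=y$'' and ``$u<(w)_k$'' harmless. Goodness is applied with parameter space $Z$ a product of $\ww$ with the other variables, which Definition \ref{def: good} permits. Hence $F$ is $\boldsymbol{\Pi}^1_m$.

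\textbf{Functionality and projection.} If $(x,\langle y,z,w\rangle)\in F$, then $y$ must be the $<$-least element of the section $A_x$. Indeed, every $y'<y$ appears as some $(z)_k\neq y$, so clause 3 yields a $u$ with $\neg B(x,y',u)$, hence $\neg A(x,y')$. Then $z$ is determined by clause 2, and each $(w)_k$ is determined by clause 3, because least elements are unique; repetitions in $z$ do no harm. Conversely, if $A_x\neq\emptyset$, let $y$ be its $<$-least element and $z$ the $<$-least enumeration of $\{y'\mid y'\leq y\}$. For each $(z)_k\neq y$ we have $\neg A(x,(z)_k)$, so a least counterexample $u$ exists, and these choices assemble into $w$. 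Thus $\exists y\,A(x,y)\iff\exists v\,(x,v)\in F$.

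\textbf{Main obstacle.} The delicate point is the complexity bookkeeping, especially in clause 3. There we need the bounded quantifier $\forall u<(w)_k$ applied to a $\boldsymbol{\Sigma}^1_{m-1}$ relation to stay $\boldsymbol{\Sigma}^1_{m-1}$. This is exactly where $m\geq n+1$ is needed, since goodness is available only from level $n$ upward. A secondary issue is canonically choosing the enumeration $z$, which clause 2 handles.
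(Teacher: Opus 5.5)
Your proof is correct, and its skeleton matches the paper's. You take the $<$-least witness and pad it with one real that records counterexamples for all $<$-smaller candidates, using goodness at level $m-1\geq n$ to keep the minimality condition at $\boldsymbol{\Sigma}^1_{m-1}$.

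Where you differ is in how the padding is made canonical. The paper never enumerates the initial segment. It lets $u$ be the $<$-least real with $\forall w'<w\,\exists i\,\neg P(x,w',(u)_i)$, and expresses this minimality as $\forall u'<u\,\exists w'<w\,\forall i\,P(x,w',(u')_i)$. That is two nested bounded quantifiers over a $\boldsymbol{\Sigma}^1_{m-1}$ matrix. Uniqueness of $u$ then comes from playing this clause for one candidate against the counterexample clause for a competing one.

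You instead fix the $<$-least enumeration $z$ of $\{y'\mid y'\leq y\}$ via the relation $R$ of the preceding lemma, and then minimize coordinatewise. So a single bounded universal quantifier suffices, and uniqueness of $w$ is immediate coordinate by coordinate. The price is an extra coordinate and an appeal to the $R$-characterization of goodness.

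The paper's formulation is leaner, and it is the one reused in the $L$ and $L[U]$ arguments. There, clauses 3.1 and 3.2 are exactly its two conditions evaluated inside a countable model.

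A minor point: your clause 2 does not need goodness. The relation $\exists z'<z\,R(z',y)$ is $\boldsymbol{\Sigma}^1_n$ simply because $<$ is $\boldsymbol{\Delta}^1_n$ and $R$ is $\boldsymbol{\Sigma}^1_n$.
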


\begin{proof} Let $Q\subseteq \omega^\omega\times \omega^\omega$ be a $\boldsymbol{\Pi}^1_m$ relation, for $m \geq n+1$. By Lemma \ref{lem: short-cut-FP} it suffices to find a $\boldsymbol{\Pi}^1_m$ partial function $C\subseteq \omega^\omega\times \omega^\omega$  that has the same projection as $Q$.

    Let $P\subseteq \omega^\omega\times \omega^\omega\times \omega^\omega$ be $\boldsymbol{\Sigma}^1_{m-1}$ such that $$Q(x,w)\iff \forall u\in \ww P(x,w,u)$$
    We define $C$ to be the set of all $(x,\langle w,u\rangle)$ that satisfy all of the following: \begin{enumerate}
        \item $Q(x,w)$
        \item $\forall w'< w \: \exists i\in \omega \: \neg P (x,w',(u)_i)$
        \item $\forall u'< u \:\exists w'< w \: \forall i\in \omega, P(x,w',(u')_i)$
    \end{enumerate}
Notice that Clause 2 is easily seen to be $\boldsymbol{\Pi}^1_{m-1}$ in $x,w,u$, and Clause 3 is $\boldsymbol{\Sigma}^1_{m-1}$ in $x,w,u$ by goodness of $<_M$. It follows that   $C$ is $\boldsymbol{\Pi}^1_m$.

  \bigskip

    Next, we show that $$\exists w\in \ww Q(x,w)\iff\exists w,u\in \ww  \:C(x,\langle w,u\rangle)$$ 

    \noindent $\Leftarrow:$ trivial by Clause 1.

    \noindent $\Rightarrow:$ Let $w$ be  $<$-least such that $Q(x,w)$. Then for every $w'< w$, there is some $u_{w'}$ s.t. $\neg P(x,w',u_{w'})$. As $\{w'\mid w'< w\}$ is countable, there will be a $u$ s.t. $\forall w'< w \: \exists i\in \omega \: \neg P (x,w',(u)_i)$. Fix $u$ to be the $<$-least such element, i.e., $\forall u'< u \: \exists w'<  w \:\forall i\in \omega \: P(x,w',(u')_i)$. Then $C(x,\langle w,u\rangle)$ holds by construction.

    \

    Finally, we show if $C(x,\langle w,u\rangle)$ holds, then $w,u$ have to be unique.  
    Suppose, towards contradiction, that $C(x,\langle w,u\rangle)$ and $C(x,\langle w',u'\rangle)$ hold and wlog. $w< w'$. Then by Clause 1 in the definition of $C(x,\langle w,u\rangle)$, we know that $Q(x,w)$ holds. Yet by Clause 2 in the definition of $C(x,\langle w',u'\rangle)$, there is some $i\in \omega$ such that $\neg P(x,w,(u')_i)$, entailing $\neg Q(x,w)$, a contradiction. Thus $w = w'$. Similarly, assume towards  contradiction that $u< u'$. Then Clause 2 of the definition of $C(x,\langle w,u\rangle)$ and Clause 3 of the definition of $C(x,\langle w,u'\rangle)$ analogously lead to a contradiction.
    
    This completes the proof.
\end{proof}

This is a contrast to the behavior of \emph{Uniformization} under a projective well-order: If there is a $\boldsymbol{\Sigma}^1_n$-good $\boldsymbol{\Delta}^1_n$ well-ordering of the reals, then $\boldsymbol{\Sigma}^1_m$-\emph{Uniformization} holds for all $m\geq n$, and as a consequence $\boldsymbol{\Pi}^1_{m}$-\emph{Uniformization} fails for all $m\geq n$.

\bigskip 
\subsection{Functional Projection in $L$}

First we recall the following result from \cite{Kanovei25}.
\begin{theorem}[Kanovei and Lyubetsky]\label{thm: FP-in-L}
    $L\models \boldsymbol{\Pi}^1_n$-$\mathsf{FP}$ for all $n\geq 1$.
\end{theorem}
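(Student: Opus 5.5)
In $L$ the canonical well-order $<_L$ of the reals is $\boldsymbol{\Delta}^1_2$, has order type $\omega_1$, and is $\boldsymbol{\Sigma}^1_2$-good. Theorem \ref{thm: FP-in-model} with $n=2$ therefore gives $\boldsymbol{\Pi}^1_m$-$\FP$ for all $m\geq 3$. For $m=1$, K\^{o}ndo's $\PI^1_1$-Uniformization theorem yields $\PI^1_1$-$\FP$ outright. So the only new case is $\boldsymbol{\Pi}^1_2$. I would handle it by rerunning the proof of Theorem \ref{thm: FP-in-model} with $m=n=2$ and locating exactly where the complexity computation breaks.

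Fix $Q(x,w)\iff \forall u\, P(x,w,u)$ with $P$ a $\boldsymbol{\Sigma}^1_1$ relation, and define $C(x,\langle w,u\rangle)$ by Clauses 1--3 as before. The existence and uniqueness arguments do not use $m\geq n+1$ and go through unchanged. Clause 1 is $\boldsymbol{\Pi}^1_2$. Clause 2 reads $\forall w'<_L w\,\exists i\,\neg P(x,w',(u)_i)$, a bounded universal quantifier applied to a $\boldsymbol{\Pi}^1_1$ matrix. This is $\boldsymbol{\Pi}^1_2$: its negation $\exists w'<_L w\,\forall i\,P$ is $\boldsymbol{\Sigma}^1_2$ by goodness, since bounded quantification over a $\boldsymbol{\Sigma}^1_1\subseteq\boldsymbol{\Sigma}^1_2$ relation stays $\boldsymbol{\Sigma}^1_2$.

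The obstacle is Clause 3, $\forall u'<_L u\,\exists w'<_L w\,\forall i\,P(x,w',(u')_i)$. By goodness it is only $\boldsymbol{\Sigma}^1_2$, not $\boldsymbol{\Pi}^1_2$. To remove it, I would instead make the witness $u$ canonical in an $L$-definable way. One option is to take $u$ to code the whole set $\{w'\mid w'<_L w\}$ together with, for each such $w'$, the $<_L$-least $v$ with $\neg P(x,w',v)$. A better option is to replace $u$ by a countable level $L_\alpha$: let $\alpha$ be least such that $L_\alpha$ contains $x$, $w$ and counter-witnesses to $Q(x,w')$ for all $w'<_L w$, and take the witness to be the $<_L$-least real coding $L_\alpha$ (or its $\omega$-closure).

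The key point is that the statement ``$z$ codes $L_\alpha$ for the least such $\alpha$'' is checkable inside the coded structure. Well-foundedness of the coded model is $\boldsymbol{\Pi}^1_1$, and every condition about $L_\alpha$ is then $\Delta^1_1$ in $z$. Requiring $z$ to be the $<_L$-least code of $L_\alpha$ is $\Delta^1_1$ in $z$ provided $z\in L_{\alpha+\omega}$, which I would arrange by asking for a code of a slightly larger level. By $\boldsymbol{\Sigma}^1_1$-absoluteness, the facts $\neg P(x,w',v)$ hold in $V$ iff they hold in the coded level. The minimality of $\alpha$ is then a $\Delta^1_1$ statement about $z$ rather than a quantification over all smaller reals.

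The resulting $C$ is Clause 1 ($\boldsymbol{\Pi}^1_2$) conjoined with a $\boldsymbol{\Pi}^1_1$ condition on $z$, hence $\boldsymbol{\Pi}^1_2$. Uniqueness of $w$ comes from Clauses 1 and 2 exactly as before, and uniqueness of $z$ from canonicity of the code. I expect the delicate step to be making ``least code'' genuinely $\boldsymbol{\Pi}^1_1$ via the condensation argument, namely that the $<_L$-least code of a countable $L_\alpha$ appears in a countable level definable from $L_\alpha$. If this fails I would fall back on Kanovei and Lyubetsky's argument in \cite{Kanovei25}.
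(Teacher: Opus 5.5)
Your reduction to the $\PI^1_2$ case matches the paper's. The argument you give for that case, however, has a gap at exactly the step you flag. You assert that ``$z$ is the $<_L$-least code of $L_\alpha$'' becomes $\Delta^1_1$ once $z\in L_{\alpha+\omega}$, and that this can be arranged. In general it cannot: the $<_L$-least code of a countable $L_\alpha$ can lie far above $L_{\alpha+\omega}$. For example, if $L_\gamma$ is the transitive collapse of a countable elementary submodel of $L_{\omega_2}$ and $\alpha=\omega_1^{L_\gamma}$, then no real of $L_\gamma$ codes $L_\alpha$. You give no reason why your particular $\alpha$ avoids this, and coding a larger level only moves the same canonicity question one level up.

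The paper resolves precisely this regress by packing an $\omega$-sequence of codes into one real. $(c)_{n+1}$ codes the least level containing $(c)_n$. That level certifies, by absoluteness of the $\PI^1_1$ statement $M_{c'}\not\cong M_{(c)_n}$, that $(c)_n$ is the $<_L$-least code of its structure. Uniqueness of $c$ then follows by induction on $n$. The paper also keeps $u$ and relativizes the two minimality clauses to $M_{(c)_0}$, where they become arithmetical in the code, rather than replacing $u$ by the level.

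That said, you are aiming for $\PI^1_1$ where $\PI^1_2$ suffices, and then the gap closes at once. The negation of ``$z$ is the $<_L$-least code of $M_z$'' is $\exists z'<_L z\,(M_{z'}\cong M_z)$, a bounded quantifier over a $\SIGMA^1_1$ matrix, hence $\SIGMA^1_2$ by goodness; with this your construction goes through.

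More importantly, the complement trick you correctly applied to Clause 2 also refutes your diagnosis of Clause 3. Its negation $\exists u'<_L u\;\forall w'<_L w\;\exists i\;\neg P(x,w',(u')_i)$ consists of bounded quantifiers over a $\PI^1_1\subseteq\SIGMA^1_2$ matrix. So it is $\SIGMA^1_2$, and Clause 3 is $\PI^1_2$ (indeed $\boldsymbol{\Delta}^1_2$). Hence the construction in Theorem~\ref{thm: FP-in-model} already produces a $\PI^1_2$ function when $m=n=2$ (in general whenever $m\geq n$), with no coding of levels at all.

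The paper's level-coding argument is therefore not needed for $L$ itself. It serves as the template for $L[U]$ at $\PI^1_2$, where $<_{L[U]}$ is only $\SIGMA^1_3$-good, so goodness alone gives nothing better than $\boldsymbol{\Delta}^1_3$ for bounded quantification over $\SIGMA^1_1$ matrices.
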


 We are interested in an alternative proof of this theorem, which can be generalized more easily to other models, in particular $L[U]$.

\bigskip For  $c\in \ww$, let $E_c$ denote the binary relation coded by $c$ in the following sense:  $$nE_cm\iff c(\langle n,m\rangle) = 0$$
and let $ \mathsf{WF} = \{c \in  \ww \mid  E_c$ is well-founded\}. For $c \in\mathsf{WF}$ we also write $M_c$ to denote the  transitive collapse of $(\omega,E_c)$, i.e., the countable transitive model coded by $c$. More generally, if $M$ is a countable transitive model, we will say ``$c$ codes $M$'' if $M = M_c$.

\begin{proof}[Alternative proof of Theorem \ref{thm: FP-in-L}]
    We work in $L$. The canonical well-order $<_L$ restricted to the reals is a $\boldsymbol{\Delta}^1_2$ relation which is $\boldsymbol{\Sigma}^1_2$-good. Thus by Theorem \ref{thm: FP-in-model}, $L\models \boldsymbol{\Pi}^1_n$-$\mathsf{FP}$ for all $n\geq 3$. K\^{o}ndo's Uniformization solves the case for $n= 1$. Thus we are only left with the case $n = 2$.

    For simplicity of notation, we will prove the result for lightface $\Pi^1_2$, since parameterization is easy.

    Let $R\subseteq \ww\times\ww $ be a ${\Pi}^1_2$ relation. Assume $P\subseteq \ww\times \ww\times \ww$ is ${\Sigma}^1_1$ such that $$R(x,w)\iff \forall u P(x,w,u)$$
    We consider the following relation: $C(x,\langle w,u,c\rangle)$ iff all of the following holds: \begin{enumerate}
        \item $R(x,w)$.
        \item $(c)_0$ codes some $L_\alpha$ and $\alpha$ is the least ordinal s.t. $L_\alpha$ contains $x,w,u$.
        \item $M_{(c)_0}$ satisfies the following: \begin{itemize}
            \item[3.1] $\forall w'<_L w \; \exists i\in \omega \; \neg P(x,w',(u)_i)$.
            \item[3.2] $\forall u'<_L u \; \exists w'<_L w \;\forall i\in \omega \: P(x,w',(u')_i)$
        \end{itemize}
        \item For all $n$, $(c)_{n+1}$ codes some $L_\alpha$ and $\alpha$ is  least  s.t. $L_\alpha$ contains $(c)_n$.
        \item For all $n$, $M_{(c)_{n+1}}$ satisfies: $(c)_n$ is the $<_{L}$-least code of the structure it codes, i.e., $M_{(c)_{n+1}}\models \forall c'<_L (c)_n, M_{c'}\not\cong M_{(c)_n}$.
    \end{enumerate}

    \

    \begin{claim}
        $C$ is ${\Pi}^1_2$
    \end{claim}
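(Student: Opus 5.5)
We go through the five clauses defining $C$ and show that each is $\Pi^1_2$. Since $\Pi^1_2$ is closed under finite conjunctions and under number quantifiers, this proves the claim. The standard tool is that, for a code $c$, the relation ``$c\in\mathsf{WF}$'' is $\Pi^1_1$. Also, once $c\in\mathsf{WF}$, any statement ``$M_c\models\varphi(\vec a)$'' about reals $\vec a$ that are given in $M_c$ by integer names is arithmetical in $c$. It is moreover $\Delta^1_1$ in $c,\vec a$ if we say ``the integer $k$ represents $a$ in $M_c$'' by a formula about the collapse. That formula is arithmetical: it says that the $n$-th element of the $E_c$-representation of $a$ is the integer $a(n)$, with integers recovered via the $E_c$-representation of $\omega$.

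\textbf{Clause 1.} This is $R(x,w)$, which is $\Pi^1_2$ by assumption.

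\textbf{Clause 2.} ``$(c)_0$ codes some $L_\alpha$'' holds iff $(c)_0\in\mathsf{WF}$ and $(\omega,E_{(c)_0})$ is extensional and satisfies $V=L$ together with the finite fragment of ZF that is used to identify the $L_\alpha$'s. This is $\Pi^1_1$. Given this, ``$x,w,u\in M_{(c)_0}$'' is $\Sigma^1_1$ via the existence of representing integers, but it is also equivalent to an arithmetical existential over integers combined with the decoding formula. So it is $\Delta^1_1$ relative to well-foundedness. Minimality of $\alpha$ means that no proper initial level $L_\beta$ of $M_{(c)_0}$ contains $x,w,u$. This is first-order over $M_{(c)_0}$, hence arithmetical in $(c)_0,x,w,u$.

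\textbf{Clause 3.} This is the satisfaction of a first-order formula in $M_{(c)_0}$ with parameters $x,w,u$, which are represented by integers. It is therefore arithmetical in $(c)_0,x,w,u$ modulo Clause 2. Here the $\Sigma^1_1$ predicate $P$ is evaluated inside $M_{(c)_0}$; since $M_{(c)_0}$ is a model satisfying enough set theory, the statement $M\models P(\dots)$ is just first-order satisfaction, and no absoluteness is needed for the definability count.

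\textbf{Clauses 4 and 5.} These are countable conjunctions, indexed by $n$, of statements of the same type as Clauses 2 and 3. Each conjunct is $\Pi^1_1$, or arithmetical given $\mathsf{WF}$. The universal number quantifier preserves $\Pi^1_1$.

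Altogether $C$ is the conjunction of the $\Pi^1_2$ set $R$ with a $\Pi^1_1$ set, hence $\Pi^1_2$.

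The main obstacle is the bookkeeping in Clauses 2 and 4: the statements ``$(c)_0$ codes an $L_\alpha$'' and ``$x\in M_c$'' must not introduce an unbounded real quantifier of the wrong polarity. For ``$x\in M_c$'' we will handle this by writing it as $\exists k\in\omega$ plus an arithmetical condition on $c,x,k$. For ``codes $L_\alpha$'' we will verify that the condition is $\mathsf{WF}$ together with a first-order theory: a well-founded extensional model of $V=L$ plus a suitable finite fragment collapses to some $L_\alpha$. Everything then reduces to the single $\Pi^1_1$ requirement of well-foundedness.
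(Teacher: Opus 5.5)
Your proof is correct, but it handles the one step with real content differently from the paper: the minimality of $\alpha$ in Clause 2 (and likewise in Clause 4).

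\textbf{How the two routes differ.} The paper expresses minimality externally. For every $c'$ coding a well-founded model of $\mathsf{ZF}^*+V=L$ that contains $x,w,u$, it asks that $M_{(c)_0}$ embed into $M_{c'}$. This is a universal real quantifier over a $\Sigma^1_1$ matrix, so the paper gets Clauses 2 and 4 as $\Pi^1_2$. You express minimality inside $M_{(c)_0}$, using that $L_\alpha$ correctly computes its own levels $L_\beta$ and their satisfaction relations. As a result, Clauses 2--5 together are $\Pi^1_1$: well-foundedness plus arithmetic. The only genuinely $\Pi^1_2$ conjunct is $R$ itself, so your route is more economical and gives a sharper bound.

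The paper's external formulation is presumably chosen because it is the template for the $L[U]$ case. There, ``least'' refers to $<_{ip}$ among iterable premice, and it is handled by the external lemma computing the complexity of being the $<_{ip}$-least iterable premouse containing $x$. In that setting, ``$\mathcal{M}_c$ is an iterable premouse'' is already $\Pi^1_2$, so internalizing minimality would gain nothing.

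\textbf{One wording fix.} As you state it, ``no proper initial level $L_\beta$ of $M_{(c)_0}$ contains $x,w,u$'' can never hold together with your requirement that $M_{(c)_0}$ satisfy the ZF fragment. Any such $\alpha$ is a limit, so finitely many elements of $L_\alpha$ already lie in some $L_\beta$ with $\beta<\alpha$. This slip is inherited from the informal wording of Clause 2 in the paper. The internal statement should range only over $\beta<\alpha$ with $L_\beta\models\mathsf{ZF}^*+V=L$, which is how the paper's proof of the claim reads Clause 2. With that change it is still first-order over $L_\alpha$, so your complexity count is unaffected.
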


    \begin{proof} 
        Clause 1 is $\Pi^1_2$ by assumption, Clause 2 is $\Pi^1_2$ in $x,w,u,c$ as it can be expressed by the conjunction of the two sentences \begin{align*}
         1. & \; (c)_0\in\mathsf{WF} \land (M_{(c)_0}\models \mathsf{ZF}^* + V = L) \land x,w,u\in M_{(c)_0} \\
            2. & \; \forall c' \bigl(( c'\in\mathsf{WF}\land (M_{c'}\models \mathsf{ZF}^* + V = L) \land x,w,u\in M_{c'}) \to M_{(c)_0} \subseteq M_{c'}
           \bigr) 
        \end{align*}
        Here $\mathsf{ZF}^*$ denotes sufficiently much to prove any
        well-founded relation admits a rank function, so that Mostowski absoluteness applies; as well, it is sufficient to prove the absoluteness of $V=L$, in order to guarantee that models that satisfy $\mathsf{ZF}^* +V=L$ are of the form $L_\alpha$. Notice that in case $M_{(c)_0} = L_\alpha$ and $M_{c'} = L_\beta$, $M_{(c)_0} \subseteq M_{c'}$ is equivalent to ``$\exists f\in \ww \text{ bijective}, \forall n,m\in \omega,( nE_{(c)_0}m\leftrightarrow f(n) E_{(c)_0}f(m))$''. Thus the conjunction above is a $\Pi^1_2$-statement since both $(M_{c'}\models \mathsf{ZF}^* + V = L)$ and $(x,w,u\in M_{c'})$ are $\Delta^1_1$ in $x,w,u,c'$.

        Clause 4 is similar. Finally, Clauses 3 and  5 are arithmetical in $x,w,u,c$.
    \end{proof}

    \

    \begin{claim}\label{claim: proj}
    $\exists w R(x,w)\iff \exists ! w \: \exists !u \: \exists !c \: C(x,\langle w,u,c\rangle) $
    \end{claim}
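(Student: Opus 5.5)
We prove the two directions separately. The direction from right to left is immediate from Clause 1: if $C(x,\langle w,u,c\rangle)$ holds then $R(x,w)$ holds. For the direction from left to right, two things are needed. First we produce a witness $\langle w,u,c\rangle$ with $C(x,\langle w,u,c\rangle)$. Then we show that any two witnesses coincide. Throughout we work in $L$ and use two facts. First, $<_L$ restricted to reals is absolute between $L_\alpha$ and $L$ for the relevant $\alpha$: the $<_L$-predecessors of a real in $L_\alpha$ all lie in $L_\alpha$ and are ordered there as in $L$. Second, since $P$ is $\Sigma^1_1$, it is absolute for countable transitive models containing the parameters (Mostowski).

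\emph{Existence.} Let $w$ be the $<_L$-least real with $R(x,w)$. As in the proof of Theorem \ref{thm: FP-in-model}, every $w'<_L w$ has some $u_{w'}$ with $\neg P(x,w',u_{w'})$. There are only countably many such $w'$, so some $u$ enumerates witnesses for all of them. Let $u$ be the $<_L$-least such real. Let $(c)_0$ be the $<_L$-least code of the least $L_\alpha$ containing $x,w,u$. Recursively, let $(c)_{n+1}$ be the $<_L$-least code of the least $L_{\alpha_{n+1}}$ containing $(c)_n$. Then Clauses 2 and 4 hold by construction.

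Clause 3 holds in $M_{(c)_0}=L_\alpha$ for two reasons. The predecessors $w'<_L w$ and $u'<_L u$ lie in $L_\alpha$, and $P$ is absolute, so 3.1 and 3.2 transfer downward from their truth in $L$. For 3.2 we use the minimality of $u$: for each $u'<_L u$ some $w'<_L w$ satisfies $P(x,w',(u')_i)$ for all $i$.

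Clause 5 holds because "$(c)_n$ is the $<_L$-least code of its structure" is true in $L$. The statement only quantifies over $c'<_L(c)_n$, and these lie in $L_{\alpha_{n+1}}$. Isomorphism between countable well-founded structures is witnessed by $\Sigma^1_1$ facts that are absolute to $L_{\alpha_{n+1}}$ because it contains the relevant reals, so the statement reflects into $L_{\alpha_{n+1}}$.

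\emph{Uniqueness.} Suppose $C(x,\langle w,u,c\rangle)$ and $C(x,\langle w',u',c'\rangle)$ both hold. We first show $w=w'$. Suppose $w<_L w'$. Clause 3.1 for the second witness, read in $M_{(c')_0}$, gives some $i$ with $\neg P(x,w,(u')_i)$ there. By upward absoluteness of $\Sigma^1_1$ facts and their negations, this is true in $L$, so $\neg R(x,w)$. This contradicts Clause 1 for the first witness.

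Next we show $u=u'$. Suppose $u<_L u'$. Clause 3.2 for the second witness gives some $w''<_L w$ with $P(x,w'',(u)_i)$ for all $i$, true in $L$ by absoluteness. Clause 3.1 for the first witness gives some $i$ with $\neg P(x,w'',(u)_i)$. These contradict each other.

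Now $(c)_0$ and $(c')_0$ code the same $L_\alpha$ by Clause 2. We prove $(c)_n=(c')_n$ by induction on $n$. Let $k\ge 0$ and suppose $(c)_k$ and $(c')_k$ code the same structure. By Clause 5 at level $k$, each of them is the $<_L$-least code of that structure as evaluated inside $M_{(c)_{k+1}}$, respectively $M_{(c')_{k+1}}$. We need this relativized minimality to agree with true minimality in $L$; this is the main obstacle. Both codes lie in $L_\beta$ for the appropriate $\beta$. All $<_L$-smaller reals also lie there. Isomorphism of countable well-founded structures is absolute to any transitive model containing both codes that satisfies $\mathsf{ZF}^*$ via rank comparison. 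Hence each of $(c)_k$ and $(c')_k$ is the true $<_L$-least code, so they are equal. Then $(c)_{k+1}$ and $(c')_{k+1}$ code the same $L_\beta$ by Clause 4, which continues the induction. So $(c)_n=(c')_n$ for all $n$, and $c=c'$.

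The point of the chain of codes is that Clause 5 pins down each $(c)_n$ from a level that is itself pinned down by Clause 4. This is what yields functionality without a non-$\Pi^1_2$ clause asserting minimality in $L$ outright.
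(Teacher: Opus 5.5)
Your proposal is correct and follows essentially the same route as the paper: you choose $w$ and then $u$ $<_L$-minimally, build the chain of least codes, verify Clauses 3 and 5 by Mostowski absoluteness, and obtain uniqueness by the same minimality arguments plus induction on $n$ through Clauses 2, 4 and 5. The only cosmetic difference is in the proof that $w=w'$: you lift $\neg P(x,w,(u')_i)$ upward from $M_{(c')_0}$ to $L$, whereas the paper pushes the $\Pi^1_2$ fact $R(x,w)$ down into $L_{\alpha'_0}$; you also spell out the uniqueness of $u$ and $c$, which the paper only sketches.
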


    \begin{proof} The $\Leftarrow$ direction follows immediately by Clause 1 in the definition of $C$.

    For the $\Rightarrow$ direction, assume $\exists w R(x,w)$ and let $w$ be the $<_L$-least element such that $R(x,w)$. Hence for all $w'<_Lw$, there is $z$ s.t.\ $\neg P(x,w',z)$. Since \mbox{$\{w'\in \omega^\omega\mid w'<_Lw\}$} is countable, there is $u$ s.t.\ for all $w'<_Lw$, $\exists n\; \neg P(x,w',(u)_n)$. Let $u$ be the $<_L$-least such element, i.e., if $u'<_Lu$ then there is $w'<_Lw$ s.t. $\forall n P(x,w',(u')_n)$.

        Let $\alpha_0$ be least s.t.\ $x,w,u\in L_{\alpha_0}$. Let $c_0$ be the $<_L$-least element coding $L_{\alpha_0}$. Then recursively pick ${\alpha_{n+1}}$ to be the least ordinal s.t.\  $c_n\in L_{\alpha_{n+1}}$ and $c_{n+1}$ to be the $<_L$-least element coding $L_{\alpha_{n+1}}$. Finally let $c$ be such that $(c)_n = c_n$. We verify that $C(x,\langle w,u,c\rangle)$ holds.

        Clauses 1, 2 and 4 in the definition of $C$ are true by construction. For clause 3 of $C$, consider $L_{\alpha_0}$, the   model coded by $c_0 = (c)_0$.

        For the conjunct 3.1, for arbitrary $w'$, suppose $w'<_Lw$. Then $w'\in L_{\alpha_0}$ and hence by assumption $\exists n \neg P(x,w',(u)_n)$. This is $\Pi^1_1$, and  by Mostowski absoluteness $L_{\alpha_0}\models \exists n \neg P(x,w',(u)_n)$. Hence 3.1 is verified.

        For the conjunct 3.2, for any $u'$ s.t. $u'<_Lu$, again $u'\in L_{\alpha_0}$. We need to find $w'$ s.t. $L_{\alpha_0}\models \forall n  P(x,w',(u')_n)$ and $w'<_Lw$. Since $u'<_{L}u$, by the minimality assumption on $u$, there is $w'<_Lw$ and $\forall n  P(x,w',(u')_n)$. Thus $w'\in L_{\alpha_0}$; moreover by Mostowski absoluteness, $\forall n  P(x,w',(u')_n)$ passes down to $L_{\alpha_0}$. This verifies conjunct 3.2.

        Finally we are left with Clause 5 in the definition of $C$. The argument is similar to the above,  after observing that $M_{c'}\not\cong M_{(c)_n}$ is the following $\Pi^1_1$ statement, thus absolute between $M_{(c)_{n+1}}$ and $L$: $$ \neg \exists f\in \ww \bigl[f \text{ is bijective }\land \forall n,m\in \omega\bigl( n E_{c'} m \leftrightarrow f(n) E_{(c)_n} f(m)\bigr)\bigr]$$

        Now we argue that such $w,u,c$ must be unique, and thus $C$ is functional. Assume $C(x,w,u,c)$ and $C(x,w',u',c')$. Suppose for contradiction that $w\neq w'$, without loss of generality  $w<_Lw'$. Let $L_{\alpha'_0}$ be the model coded by $(c')_0$.  Then $w\in L_{\alpha'_0}$ and $L_{\alpha'_0}\models w<_{L}w'$. But since $R(x,w)$ holds in $L$, by Clause 1 of $C$, and by downward absoluteness for $\Pi^1_2$ predicates, $L_{\alpha'_0} \models R(x,w)$ as well. But this contradicts Clause 3 of $C$, which, together with   $w<_{L}w'$, implies that $L_{\alpha'_0}\models \exists n \neg P(x,w,(u')_n)$ and hence ${\alpha'_0} \models \neg R(x,w)$. This concludes the proof that $w = w'$. The argument that $u = u'$ is a similar minimality argument using clauses 3.1 and 3.2. Finally $c = c'$ is established by inductively proving that $c_n = c'_n$, appealing to Clauses 2, 4 and 5.
    \end{proof}
    \
    This completes the proof.
\end{proof}

\bigskip
\subsection{Facts about $L[U]$}

We now move on to show that \emph{Functional Projection} holds on all levels in $L[U]$. First we need to recall some definitions and standard facts about   $L[U]$. Let  $\kappa$ be a measurable cardinal and $U$  a normal ultrafilter on $\kappa$. We use $V$ to denote a predicate symbol interpreted as a normal ultrafilter in a model of $\mathsf{ZFC}^-$. 

\begin{definition}
    A structure $(M,\in, V)$ is a called a \emph{premouse} if \begin{enumerate}
        \item $(M,\in, V)\models V$ is a normal ultrafilter at $\kappa$.
        \item $M\models \mathsf{ZFC}^-$.
        \item $M = L_\xi[V]$ for some $\xi$.
    \end{enumerate}

    \noindent $(M,\in,V)$ is called \emph{iterable} if there is a system $(M_{\alpha},V_\alpha,j_{\alpha\beta}\mid \alpha\leq\beta\in Ord)$ of transitive well-founded models s.t. \begin{enumerate}
        \item $(M_0,V_0) = (M,V)$,
        \item $j_{\alpha\beta}:M_\alpha\to M_\beta$ are elementary embeddings that commute,
        \item If $\beta = \alpha+1$ is a successor, then $M_\beta= Ult(M_\alpha,V_\alpha)$ and $j_{\alpha\beta}$ is the ultrapower map,
        \item If $\gamma$ is a limit, then $(M_{\gamma}, V_\gamma)$ is the transitive collapse of the direct limit of $((M_\alpha, \in , V_\alpha), j_{\alpha\beta}\mid \alpha\leq \beta <\gamma)$. The maps $j_{\alpha\gamma}$ are given by the direct limit map composed with the collapsing map.
    \end{enumerate}
\end{definition}

The key fact of iterable premice is that they agree with $L[U]$ on initial segments of the constructible order. 

\begin{lemma}\label{lem: <-L[U]}
For any iterable premouse $(M,\in,V)$:
    \begin{enumerate}
        \item For any $x\in L[U]\cap 2^{\omega}$, if $x\in M$ and $y<_{L[U]}x$, then $y\in M$ and  \mbox{$(M,\in,V)\models $``$ y<_{L[V]}x$''.} 
 
        \item If $x,y\in M\cap 2^{\omega}$ then $$y<_{L[U]}x\iff (M,\in,V)\models \text{``}  y<_{L[V]}x\text{''}$$
    \end{enumerate}
\end{lemma}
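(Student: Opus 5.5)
The plan is to reduce both clauses to the comparison lemma for iterable premice, applied to $(M,\in,V)$ and $L[U]$ itself, using iterated ultrapowers. Let $\kappa_M$ be the measurable of $(M,V)$ and $\kappa$ that of $L[U]$. First I will iterate $(M,V)$ and $(L_\theta[U],U)$ (for suitable large $\theta$) the same number of times $\lambda$, for a regular cardinal $\lambda$ larger than both measurables and $|M|$. The standard comparison argument (Kunen) then shows that the images of the two measures coincide at the common critical point $\lambda$, since both become the club filter restricted to the relevant model. Hence the iterate $M_\lambda$ is an initial segment $L_\eta[W]$ of the iterate $L[U]_\lambda = L[W]$, or the other way round, with the same predicate $W$.

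Next I will use the fact that the iteration maps have critical points $\ge \kappa_M > \omega$, respectively $\ge\kappa>\omega$. Therefore they fix every real and every countable object, so $M\cap 2^\omega = M_\lambda\cap 2^\omega$. Moreover, by elementarity, ``$y<_{L[V]}x$'' in $M$ is equivalent to ``$y<_{L[W]}x$'' in $M_\lambda$. On the $L[U]$ side, $j:L[U]\to L[W]$ fixes reals and sends $<_{L[U]}$ on reals to $<_{L[W]}$ on reals, so the two orders agree on reals.

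Because the canonical order of $L[W]$ is defined level by level, it is absolute between $L_\eta[W]$ and $L[W]$ whenever $L_\eta[W]$ is an initial segment. The same holds for its restriction to the reals in the smaller structure. This settles both clauses, with one case to handle carefully. If $M_\lambda\supseteq L[U]_\lambda$ at the relevant height, then all reals of $L[U]$ lie in $M$. For clause 1, I also need that if $x\in M$ then every $y<_{L[U]}x$ is constructed at a level below that of $x$. Within $L_\eta[W]$ these predecessors appear before $x$, so they belong to $M_\lambda$ and hence to $M$. In this way clause 1 follows from initial-segment agreement, and clause 2 follows from elementarity together with absoluteness.

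The main obstacle is the comparison step, namely proving that after $\lambda$ steps the two measures coincide and one model end-extends the other. I would cite the standard Kunen/Silver argument, for instance as presented in Jech or Kanamori on $L[U]$, rather than reprove it. The one point to check explicitly is that iterability (well-foundedness of all iterates) is exactly what is needed there. I also have to confirm that $\mathsf{ZFC}^-$ suffices inside $M$ to define $<_{L[V]}$ and to make the ultrapower maps fully elementary.
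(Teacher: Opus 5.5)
Your proposal is correct and is essentially the argument the paper relies on. The paper gives no proof of its own but cites Kanamori, where the lemma is proved exactly this way: compare $M$ with (an initial segment of) $L[U]$ by $\lambda$-length iterations until both measures become the club filter on $\lambda$, then use that iteration maps fix reals and that the canonical order is absolute between an initial segment $L_\eta[W]$ and $L[W]$.

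One small tightening: the equality $M\cap 2^\omega = M_\lambda\cap 2^\omega$ does not follow merely from the iteration maps fixing reals. You use it for $y\in M_\lambda\Rightarrow y\in M$ in clause 1, and similarly need $L[W]\cap 2^\omega = L[U]\cap 2^\omega$. Both rely on the fact that no iterate acquires new subsets of $\omega$: each ultrapower step by an $M_\alpha$-complete, amenable measure with critical point $>\omega$ adds none, and direct limits add none either.
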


\begin{proof}
    See, for instance,   Kanamori \cite[p 271]{Kanamori}.
\end{proof}

A consequence of this Lemma is that $<_{L[U]}$ restricted to the reals is a $\Sigma^1_3$-relation.

    \bigskip 
    
    For the sake of the following discussion, we want a real to code a structure $(\omega,E,W)$. For a real $c\in \ww$, write \begin{align*}
    nE_cm\iff c(2\langle n,m\rangle) =0\\
    n\in W_c \iff c(2n+1) = 0
\end{align*}
Then, in case $E_c$ is well-founded, let $\mathcal{M}_c = ({M_c} ,\in ,V)$ be the transitive collapse  of $(\omega, E_c, W_c)$. We say that $c$ codes the structure $\mathcal{M}_c$.

\begin{theorem}\label{thm: complexity in L[U]}
    Assume $V = L[U]$.
    \begin{enumerate}
        \item ``$\mathcal{M}_c$ is an iterable premouse'' is a $\Pi^1_2$-relation on $c$.
        \item $<_{L[U]} \upharpoonright 2^\omega$ is a  \(\Sigma^1_3\)-good \(\Delta^1_3\) well-ordering.
    \end{enumerate}
\end{theorem}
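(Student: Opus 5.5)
For part 1, I will go through the three clauses in the definition of a premouse and then handle iterability. Being a premouse is arithmetical in $c$, modulo well-foundedness of $E_c$. Concretely, $(\omega,E_c,W_c)$ must satisfy $\mathsf{ZFC}^-$, "$V$ is a normal ultrafilter on $\kappa$", and "$V=L[V]$". The last condition pins down $M_c = L_\xi[W]$ once the model is well-founded, by the usual absoluteness of the $L[V]$-hierarchy for models of enough of $\mathsf{ZFC}^-$. Well-foundedness is $\Pi^1_1$. So the premouse part is $\Pi^1_1$.

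For iterability I will use the standard reduction: a countable premouse is iterable iff every countable iterate is well-founded, i.e.\ iff for every $\alpha<\omega_1$ the $\alpha$-th iterate is well-founded. This is Kunen's result that ill-foundedness must appear at a countable stage, since a failure at a stage $\geq\omega_1$ reflects down via a countable elementary substructure argument. I take this from the literature (Kanamori, \S 19--20). The relation is then
\[
\forall d\,\forall e\,\bigl[(d\in\mathsf{WF}\text{ codes an ordinal }\alpha \wedge e \text{ codes the iteration of } \mathcal{M}_c \text{ of length }\alpha+1) \to e \text{ has all models well-founded}\bigr].
\]
The iteration itself can be constructed arithmetically from $c$ and $d$ by the ultrapower and direct-limit definitions. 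So "$e$ codes the iteration" is arithmetical, or at worst $\Delta^1_1$, in $(c,d,e)$. Well-foundedness of the models is $\Pi^1_1$, and "$d\in\mathsf{WF}$" sits in the antecedent. Together this gives a $\Pi^1_2$ condition. One subtlety: the direct-limit models might be ill-founded, in which case we cannot collapse them. I will handle this by letting $e$ code the term models (equivalence classes of functions, or direct-limit threads) rather than transitive collapses, and only demanding well-foundedness at the end. This step is the main obstacle: setting up the coding of the iteration uniformly and arithmetically, and justifying countable iterability.

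For part 2, Lemma \ref{lem: <-L[U]} gives
\[
y<_{L[U]}x \iff \exists c\,(\mathcal{M}_c \text{ iterable premouse} \wedge x,y\in M_c \wedge \mathcal{M}_c\models y<_{L[V]}x).
\]
By part 1 this is $\Sigma^1_3$. It is $\Delta^1_3$ because for $x\neq y$ we can write $\neg(y<x)\iff y=x\vee x<y$, which is again $\Sigma^1_3$. Such a witnessing $c$ exists in $L[U]$, since there every real lies in some countable iterable premouse, namely the collapse of a countable hull of a suitable $L_\xi[U]$. The order has type $\omega_1$ because each initial segment is contained in a countable $M_c$.

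For goodness I will use the preceding Lemma and exhibit a $\Sigma^1_3$ relation $R(z,x)$ saying that $z$ enumerates $\{y\mid y\le x\}$. Set
\[
R(z,x)\iff \exists c\,\bigl(\mathcal{M}_c \text{ iterable premouse} \wedge x\in M_c \wedge \{(z)_n\} = \{y\in M_c\cap\ww \mid \mathcal{M}_c\models y\le_{L[V]}x\}\bigr).
\]
The final clause is arithmetical in $c,z,x$. By Lemma \ref{lem: <-L[U]}(1), $M_c$ contains all $y<_{L[U]}x$ and computes the order correctly, so the set on the right is exactly the true initial segment. Hence $R$ is $\Sigma^1_3$, and $<_{L[U]}$ is $\Sigma^1_3$-good.
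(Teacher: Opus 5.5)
Your proposal is correct and is essentially the standard argument (Silver's, as presented in Kanamori around pp.\ 271--273) that the paper only cites. It shows iterability is $\Pi^1_2$ by reducing to well-foundedness of all countable iterates, then obtains $\Sigma^1_3$-definitions of $<_{L[U]}$ and of the initial-segment-coding relation by quantifying over codes of countable iterable premice and using the agreement lemma for iterable premice. One small imprecision: $(\omega,E_c,W_c)\models\mathsf{ZFC}^-$ is $\Delta^1_1$ rather than arithmetical (the paper's $L$-case says the same), but this does not affect any of the complexity bounds.
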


\begin{proof}
    See  \cite[p 272]{Kanamori}.
\end{proof}

Now we define an ordering $<_{ip}$   on iterable premice as follows: $( M,\in, V) <_{ip} ( N,\in, W)$ iff there is $F$ s.t.\ $( L_\xi[F],\in, F\cap L_\xi[F])$ is an iterate of $( M,\in, V)$, $( L_\eta[F],\in, F\cap L_\eta[F])$ is an iterate of $( N,\in, W)$ and $\xi<\eta$. 

\begin{lemma}
    $<_{ip}$ is a well-ordering on iterable  premice.
\end{lemma}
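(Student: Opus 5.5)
The plan is to use the comparison theory for iterable premice, namely the standard fact that any two iterable premice can be coiterated. I also use that every iterable premouse iterates to a canonical $L[U]$-type structure (see Kanamori, around the cited pages). Concretely, for iterable $(M,\in,V)$ and ordinals $\theta$ large enough (say a regular cardinal $\theta$ above $|M|^+$), the $\theta$-th iterate of $(M,\in,V)$ has the form $(L_\xi[F_\theta],\in,F_\theta\cap L_\xi[F_\theta])$. Here $F_\theta$ is the closed unbounded filter on $\theta$ restricted appropriately, and $\xi$ depends only on $M$. I would fix such a $\theta$ simultaneously for the two premice being compared. Then both $M$ and $N$ iterate to initial segments $L_\xi[F]$ and $L_\eta[F]$ of the same model $L[F]$, with $F=F_\theta$.

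First, I would show that $<_{ip}$ is well defined, i.e.\ independent of the choice of $F$. If $M$ and $N$ iterate into $L_\xi[F],L_\eta[F]$ and also into $L_{\xi'}[F'],L_{\eta'}[F']$, I would further iterate both pairs to a common large $\theta'$. The iteration maps preserve the relative heights in the sense that $\xi<\eta\iff\xi'<\eta'$. This holds because the iterate of an initial segment $L_\xi[F]$ of $L[F]$ is the corresponding initial segment of the iterate of $L[F]$, and iteration maps are order preserving on ordinals. Linearity then follows: given $M,N$, iterate both to a common $\theta$. Their iterates are then initial segments of one model $L[F]$, so exactly one of $\xi<\eta$, $\xi=\eta$, $\xi>\eta$ holds. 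I must also handle $\xi=\eta$, which should force $M=N$. By comparison, if two iterable premice have the same $\theta$-th iterate for large $\theta$, they are equal, since the critical points are moved uniquely. Alternatively, one can argue via the Dodd--Jensen-type uniqueness: $M$ is recovered from its iterate as the iterate at stage $0$, since the measure in a premouse is determined by the iteration. If the paper intends $<_{ip}$ only up to this identification, I would state that explicitly. Irreflexivity and transitivity are immediate once everything is computed in a single common $L[F]$.

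Finally, for well-foundedness, suppose $M_0>_{ip}M_1>_{ip}\cdots$ is a descending sequence. I choose a single $\theta$ large enough for all countably many $M_k$ and iterate each $M_k$ $\theta$ times. Each then becomes $L_{\xi_k}[F_\theta]$, and by the well-definedness step we get $\xi_0>\xi_1>\cdots$. This is an infinite descending sequence of ordinals, which is a contradiction.

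The main obstacle is the well-definedness step: justifying that the comparison ordinal $\xi<\eta$ does not depend on the witness $F$. This requires the precise form of the iteration lemma, namely that long iterates of any iterable premouse are initial segments of $L[F_\theta]$ and that further iteration acts coherently. I would cite this from Kanamori's treatment, §19–20, rather than reprove it.
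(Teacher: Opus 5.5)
There is a genuine gap: your handling of the case $\xi=\eta$ is false, and your justification of well-definedness is false as written, although its conclusion can be rescued. The paper itself gives no argument here, only the citation to Kanamori, and your outline is the standard one behind that citation. By Kunen's lemma, for $\theta$ regular above $|M|$, the $\theta$-th iterate of $M$ is $L_\xi[C_\theta]$ with the club-filter predicate ($\xi$ depends on $\theta$ as well as on $M$). So countably many premice line up inside one $L[C_\theta]$, and comparability and well-foundedness reduce to ordinals.

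\textbf{The case $\xi=\eta$.} Take an iterable premouse $M$ and $M_1=\mathrm{Ult}(M,V)$. Then $(M_1)_\theta=M_{1+\theta}=M_\theta$ for all large $\theta$, yet $M\neq M_1$, since their measures live on $\kappa$ and on its image under the ultrapower map. In fact, under the paper's definition neither $M<_{ip}M_1$ nor $M_1<_{ip}M$ holds. Two iterates lying in a common $L[F]$ must measure the same cardinal, because the shorter model's measure is a restriction of the longer one's. Since critical points strictly increase along an iteration, this forces them to be the same iterate of $M$, so $\xi=\eta$. Hence critical points are not ``moved uniquely'' back to the original premouse, and no Dodd--Jensen argument recovers $M$ from its iterate. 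What is true, and what you should prove, is that $<_{ip}$ is a prewellordering: a well-ordering of the classes of premice that have a common iterate. Your hedge is the correct statement, not an optional remark. Any later use of ``the $<_{ip}$-least premouse containing $x$'' then needs a choice of representative.

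\textbf{Well-definedness.} You claim that the iterate of an initial segment $L_\xi[F]$ is the corresponding initial segment of the iterate of $L_\eta[F]$. This fails, because the short model has fewer functions on $\kappa$ and so its internal ultrapower is smaller. For example, in $L[U]$ collapse an elementary substructure of $L_{\kappa^{++}}[U]$ of size $\kappa$ containing $\kappa+1$. This gives an iterable premouse $L_{\bar\xi}[U]$ with $\bar\xi<\kappa^+$ and the restricted measure. Its ultrapower sends $\kappa$ below $\kappa^+$, whereas $j_U(\kappa)>\kappa^+$. So after one step the two iterates carry measures on different cardinals and are no longer initial segments of a common model.

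The conclusion survives with a different argument. Suppose $M'=L_\xi[F]$ and $N'=L_\eta[F]$ with $\xi<\eta$; then $M'\in N'$. Let $i_{0\gamma}$ be the iteration maps of $N'$. Using the shift lemma, build elementary maps $k_\gamma:M'_\gamma\to i_{0\gamma}(M')$ that commute with the iteration maps and send the critical point of $M'_\gamma$ to that of $N'_\gamma$. This works because $i_{0\gamma}(M')$ is an initial segment of $N'_\gamma$ carrying the restricted measure. Then $\mathrm{Ord}\cap M'_\gamma\le i_{0\gamma}(\xi)<\mathrm{Ord}\cap N'_\gamma$. Taking $\gamma=\theta'$ large and regular gives the strict inequality between canonical iterates that your well-definedness, transitivity and well-foundedness steps need.
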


\begin{proof}
    Kanamori \cite[p 273]{Kanamori}.
\end{proof}

\begin{lemma}\label{lem: complexity of ip}
    For $x\in L[U]\cap 2^\omega$, the relation ``$\mathcal{M}_c$ is the $<_{ip}$-least countable iterable premouse containing $x$'' is a $\Pi^1_2$ relation on $c,x$.
\end{lemma}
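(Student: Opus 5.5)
We express the relation as a conjunction of an ``existence'' clause and a ``minimality'' clause, and check that each is $\Pi^1_2$ in $(c,x)$. Concretely, we write
\[
\mathcal{M}_c \text{ is the } <_{ip}\text{-least countable iterable premouse containing } x
\]
as the conjunction of the following two statements.

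\begin{enumerate}
\item $\mathcal{M}_c$ is an iterable premouse and $x\in M_c$.
\item For every $c'$: if $\mathcal{M}_{c'}$ is an iterable premouse containing $x$, then $\mathcal{M}_c\leq_{ip}\mathcal{M}_{c'}$.
\end{enumerate}

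The first conjunct is $\Pi^1_2$. By Theorem \ref{thm: complexity in L[U]}(1), ``$\mathcal{M}_c$ is an iterable premouse'' is $\Pi^1_2$ in $c$. Given well-foundedness, ``$x\in M_c$'' is $\Delta^1_1$ in $(c,x)$: it says there is $k$ such that the $E_c$-extension of $k$ decodes exactly the digits of $x$.

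The second conjunct is handled by the comparison lemma for iterable premice (Kanamori p.~273). If $\mathcal{M}$ and $\mathcal{N}$ are countable iterable premice, then iterating both up to some uncountable regular $\theta>$ the relevant critical points produces models $L_\xi[F]$ and $L_\eta[F]$ over a common measure $F$ (the club filter restricted to the iteration). Hence $\mathcal{M}\leq_{ip}\mathcal{N}$ iff $\xi\leq\eta$. The main obstacle is that $<_{ip}$ is defined through class-length iterations, so it is not visibly projective.

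The plan is to avoid computing the complexity of $<_{ip}$ itself, using the standard characterization of minimality. For countable iterable premice containing $x$, $\mathcal{M}$ is $<_{ip}$-least iff $\mathcal{M}$ is \emph{minimal}: no proper initial segment $L_{\zeta}[V]$ of $\mathcal{M}$ with $\zeta$ less than the height of $M$ is a premouse (model of $\mathsf{ZFC}^-$ with $V$ a normal measure) containing $x$. Comparison shows this is equivalent. If some $\mathcal{N}<_{ip}\mathcal{M}$ contains $x$, compare them: $\mathcal{N}$ iterates to $L_\xi[F]$ and $\mathcal{M}$ to $L_\eta[F]$ with $\xi<\eta$. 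Since $x$ is real and the critical points exceed $\omega$, $x\in L_\xi[F]$. Then elementarity of $j:\mathcal{M}\to L_\eta[F]$ pulls back ``some proper initial segment of me is a premouse containing $x$'' to $\mathcal{M}$. Conversely, proper initial segments of iterable premice that are premice are themselves iterable and $<_{ip}$-smaller.

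With this characterization, the second conjunct becomes the first-order statement
\[
\mathcal{M}_c\models \neg\exists\zeta\,\bigl(L_\zeta[V]\models \mathsf{ZFC}^-\ \land\ V\cap L_\zeta[V] \text{ is a normal measure}\ \land\ x\in L_\zeta[V]\bigr),
\]
which is arithmetical in $(c,x)$ once $E_c$ is well-founded. One subtlety must be checked: ``$V\cap L_\zeta[V]$ is a normal ultrafilter in $L_\zeta[V]$'' must be evaluated inside $\mathcal{M}_c$, and this is correct because all subsets of $\kappa$ in $L_\zeta[V]$ lie in $M_c$. Altogether the relation is the conjunction of a $\Pi^1_2$ statement and a $\Delta^1_1$ statement, hence $\Pi^1_2$. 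As in the $L$ case, the hypothesis $x\in L[U]$ guarantees that such a premouse exists at all.
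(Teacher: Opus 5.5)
Your argument is correct. The paper gives no argument of its own for this lemma; it only points to the proof of Kanamori's Theorem 20.21. Your proposal is therefore a self-contained version of what is being cited, and I can't confirm that it matches Kanamori's exact argument.

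The decisive point is one you handle correctly, though you slightly misdiagnose it. The real obstruction is not that $<_{ip}$ is defined via class-length iterations. It is that the literal minimality clause quantifies over all competing codes $c'$ with the $\Pi^1_2$ antecedent ``$\mathcal{M}_{c'}$ is iterable''. That gives only $\Pi^1_3$, even if the comparison itself were simple. Your comparison-plus-elementarity argument replaces this clause with a first-order property of $\mathcal{M}_c$ itself. The only non-arithmetical ingredient left is then the iterability of $\mathcal{M}_c$, which yields $\Pi^1_2$.

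Two points deserve to be made explicit.

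First, $<_{ip}$ is only a prewellordering. For instance, $\mathcal{M}$ and $\mathrm{Ult}(\mathcal{M},V)$ are both countable, are $<_{ip}$-equivalent, and have the same reals. So ``the $<_{ip}$-least'' must be read as ``$<_{ip}$-minimal''. Your formulation with $\le_{ip}$ does exactly this. Your internal condition is then satisfied by every such minimal premouse, not by a unique one, which is the correct behaviour.

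Second, the step ``premouse initial segments of iterable premice are iterable and $<_{ip}$-smaller'' is asserted without proof and needs the usual copying argument. Let $\mathcal{N}=L_\zeta[V]$ with $\zeta$ below the height of $M$. The iterates $\mathcal{N}_\alpha$ embed elementarily into $j_{0\alpha}(\mathcal{N})$ via the maps $k_0=\mathrm{id}$ and $k_{\alpha+1}([f])=[k_\alpha(f)]$, with direct limits at limit stages. The successor step is legitimate because the relevant sets of the form $\{\xi : \ldots\}$ lie in the smaller model, and membership in its measure is preserved by $k_\alpha$. This embedding gives well-foundedness of the $\mathcal{N}_\alpha$. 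It also shows that the height of $\mathcal{N}_\theta=L_\xi[F]$ satisfies $\xi\le j_{0\theta}(\zeta)<\eta$, which is what you need for $\mathcal{N}<_{ip}\mathcal{M}$.
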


\begin{proof}
    See the proof of \cite[Theorem 20.21]{Kanamori}.
\end{proof}

\subsection{Functional Projection in $L[U]$}

Now we prove the main result about \emph{Functional Projection} in $L[U]$.

\begin{theorem}\label{thm: FP-in-L[U]}
    $L[U]\models \boldsymbol{\Pi}^1_n$-$\mathsf{FP}$ for all $n\geq 1$.
\end{theorem}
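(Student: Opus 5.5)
Assume $V=L[U]$ and split into cases by $n$, as in the alternative proof of Theorem \ref{thm: FP-in-L}. For $n=1$ we use K\^ondo's $\PI^1_1$-Uniformization. For $n\geq 4$ we apply Theorem \ref{thm: FP-in-model}: by Theorem \ref{thm: complexity in L[U]}, $<_{L[U]}\upharpoonright 2^\omega$ is a $\Sigma^1_3$-good $\Delta^1_3$ well-order of order type $\omega_1$, and Lemma \ref{lem: short-cut-FP} lets us pass between $2^\omega$ and $\ww$. The remaining cases are $n=2$ and $n=3$. We argue lightface, since parameters go through unchanged.

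\textbf{Case $n=3$.} Here the Theorem \ref{thm: FP-in-model} construction almost works. Write $R(x,w)\iff\forall u\,P(x,w,u)$ with $P$ in $\Sigma^1_2$, and define $C(x,\langle w,u\rangle)$ by the same three clauses. Clause 1 is $\Pi^1_3$. Clause 2, $\forall w'<w\,\exists i\,\neg P(x,w',(u)_i)$, is a universal bounded quantifier over a $\Pi^1_2$ matrix; since the order is $\Delta^1_3$ this is at worst $\Pi^1_3$. Alternatively, in the style of the $L$ proof, it can be stated inside the $<_{ip}$-least iterable premouse containing $x,w,u$. Clause 3 is an existential bounded quantifier applied to a $\Sigma^1_2$ matrix, so by $\Sigma^1_3$-goodness it is $\Sigma^1_3$, which is not good enough. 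To fix this, we relativize Clause 3 to the least premouse: require that $(c)_0$ codes the $<_{ip}$-least countable iterable premouse containing $x,w,u$ (a $\Pi^1_2$ condition by Lemma \ref{lem: complexity of ip}), and that $\mathcal M_{(c)_0}$ satisfies Clauses 3.1 and 3.2. Then $\Sigma^1_2$ and $\Pi^1_2$ facts about reals in $\mathcal M_{(c)_0}$ are upward absolute (Shoenfield, using $\omega_1\subseteq$ the iterates' ordinals) and downward absolute for $\Pi^1_2$. By Lemma \ref{lem: <-L[U]}, the $<_{L[U]}$-predecessors of $w,u$ all lie in $\mathcal M_{(c)_0}$ and are ordered correctly there.

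\textbf{Case $n=2$.} We mimic the $L$ proof verbatim, with ``$L_\alpha$ coded by $c$'' replaced by ``$\mathcal M_c$ is the $<_{ip}$-least countable iterable premouse containing the given reals'', which is $\Pi^1_2$ by Lemma \ref{lem: complexity of ip}. Clauses 3.1 and 3.2 become statements evaluated in $\mathcal M_{(c)_0}$ about the $<_{L[V]}$ order there. These are arithmetical in $c,x,w,u$, and Lemma \ref{lem: <-L[U]} guarantees they mean the right thing. Clause 5 is handled the same way: each $(c)_n$ must be the $<_{L[U]}$-least code of its structure, evaluated inside $\mathcal M_{(c)_{n+1}}$, the $<_{ip}$-least premouse containing $(c)_n$. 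Non-isomorphism of codes is $\Pi^1_1$, hence absolute to the transitive model. The existence and uniqueness arguments then run exactly as in Claim \ref{claim: proj}:
\begin{itemize}
\item take $w$ to be $<_{L[U]}$-least with $R(x,w)$, then $u$ to be $<_{L[U]}$-least witnessing the failures below $w$, then the least codes;
\item for uniqueness, use downward absoluteness of the $\Pi^1_2$ relation $R$ and of $\Pi^1_1$ facts into $\mathcal M_{(c')_0}$, together with Lemma \ref{lem: <-L[U]}(2), to compare $w$ and $w'$ inside one premouse.
\end{itemize}

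\textbf{Main obstacle.} The hard step is the absoluteness bookkeeping, that is, checking that truth inside $\mathcal M_{(c)_0}$ agrees with truth in $L[U]$ for the relevant formulas. For $n=2$, $P$ is $\Sigma^1_1$, so Mostowski absoluteness suffices, as in $L$. For $n=3$, $P$ is $\Sigma^1_2$, and we need Shoenfield absoluteness between a countable premouse and $V$. This may fail, because the premouse need not contain $\omega_1$. If it does fail, the fallback is to use an iterate of $\mathcal M_{(c)_0}$ of height above $\omega_1$: iterations are elementary and fix the reals, so $\Sigma^1_2$ truth is transferred back. Equivalently, one can express Clause 3.2 as ``$\exists w'$ below $w$ in $\mathcal M_{(c)_0}$ with $\forall i\,P$'', which is $\Pi^1_3$-compatible because the quantifier over $w'$ ranges over the countable set coded by $(c)_0$, i.e.\ it is a number quantifier.
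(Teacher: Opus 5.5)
Your cases $n=1$, $n\ge 4$ and $n=2$ match the paper's proof. The real difference is at $n=3$, and your argument there is correct but takes another route.

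\textbf{The paper's route.} It stays at the level of Mostowski absoluteness by adding a further witness. It writes the $\Sigma^1_2$ matrix as $S=\exists v\,T$ with $T\in\Pi^1_1$, lets $v$ enumerate the $T$-witnesses, and adds a third minimality clause (3.3) for $v$. As a result, only $T$, $\neg T$ and the downward-absolute $\Pi^1_2$ predicate $\neg S$ are ever evaluated inside $\mathcal M_{(c)_0}$.

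\textbf{Your route.} You keep the two-witness scheme of Theorem~\ref{thm: FP-in-model} and do one of two things.
\begin{enumerate}
\item[(a)] You use that countable iterable premice are $\Sigma^1_2$-correct. Plain Shoenfield absoluteness does fail for the countable model, so your iterate argument is essential, not a fallback. The $\omega_1$-st iterate contains $\omega_1$ (its measurable is $\kappa_{\omega_1}=\omega_1$), Shoenfield applies there, and the iteration map is elementary and fixes reals. This correctness is exactly what the existence direction needs, namely pushing $\forall i\,P(x,w',(u')_i)$ down into $\mathcal M_{(c)_0}$. Uniqueness only uses the automatic upward absoluteness of $\Sigma^1_2$.
\item[(b)] You use $\mathcal M_{(c)_0}$ only as a countable enumeration of the $<_{L[U]}$-predecessors of $w$ and $u$. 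This is the most economical option. By Lemma~\ref{lem: <-L[U]}, the reals that $\mathcal M_{(c)_0}$ places below $w$ (resp.\ $u$) are exactly the true $<_{L[U]}$-predecessors. So both bounded quantifiers in Clause 3 become number quantifiers over the code, and Clause 3, evaluated in $V$, is $\Sigma^1_2$. No absoluteness beyond that lemma is needed.
\end{enumerate}

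\textbf{What each approach buys.} Route (b) shows more generally that the proof of Theorem~\ref{thm: FP-in-model} already works at $m=n$, provided codes for initial segments of the well-order are pinned down by a $\Pi^1_{n-1}$ condition. The paper's route buys uniformity with the $L$-argument, at the cost of an extra witness layer.

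\textbf{One gap to fill.} In the $n=3$ case, requiring that $(c)_0$ code the $<_{ip}$-least premouse does not make $c$ unique. You need a least-code requirement. At this level it can be stated directly as $\forall c'<_{L[U]}c\;(\mathcal M_{c'}\not\cong\mathcal M_{c})$, which is already $\Pi^1_3$ because $<_{L[U]}$ is $\Delta^1_3$ and non-isomorphism is $\Pi^1_1$. Alternatively, reuse the chain of Clauses 4--5 from the $n=2$ case.
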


\begin{proof}
    We work in $L[U]$. By Theorem \ref{thm: complexity in L[U]}, the canonical well-order $<_{L[U]}$ restricted to the reals is a $\boldsymbol{\Delta}^1_3$ relation that is $\boldsymbol{\Sigma}^1_3$-good and is of order type $\omega_1^{L[U]}$.  Thus by Theorem \ref{thm: FP-in-model}, $L[U]\models \boldsymbol{\Pi}^1_n$-$\mathsf{FP}$ for all $n\geq 4$.  K\^{o}ndo's Uniformization theorem takes care of the case  $n=1$. So we are  left with the cases $n = 2$ and $n = 3$. For ease of exposition we restrict  attention to lightface point-classes, since the parametrization is easy.
    
    \bigskip

    \noindent \textbf{The case for $\boldsymbol{n = 2}$}. The argument is similar to the proof of Theorem \ref{thm: FP-in-L} with the order $<_{L[U]}$ taking over the role of $<_L$, iterable premice the role of $L_\alpha$, and $<_{ip}$  the role of the initial segment order on $L_\alpha,\alpha<\omega_1$. 
  For the sake of completeness we include a sketch of the proof.

    Let $R\subseteq \ww\times\ww $ be a ${\Pi}^1_2$ relation. Assume $P\subseteq \ww\times \ww\times \ww$ is ${\Sigma}^1_1$ s.t.  $$R(x,w)\iff \forall u P(x,w,u)$$
Define the following relation: $C(x,\langle w,u,c\rangle)$ iff all of the following hold: \begin{enumerate}
        \item $R(x,w)$.
        \item $\mathcal{M}_{(c)_0}$ is the $<_{ip}$-least countable iterable premouse containing $x,w,u$.
        \item Recall that $\mathcal{M}_{(c)_0}$ is the transitive collapse of $(\omega,E_{(c)_0},W_{(c)_0})$. Assume $\mathcal{M}_{(c)_0}$ is $(M_{(c)_0}, \in, V)$, then it satisfies: \begin{itemize}
            \item $\forall w'<_{L[V]}w,\exists i\in \omega \: \neg P(x,w',(u)_i)$.
            \item $\forall u' <_{L[V]} u, \exists w' <_{L[V]} w, \forall i\in \omega \:P(x,w',(u')_i)$.
        \end{itemize}
        \item For all $n$, $\mathcal{M}_{(c)_{n+1}}$ is the $<_{ip}$-least countable iterable premouse containing $(c)_n$.
        \item For all $n$, $\mathcal{M}_{(c)_{n+1}} = ({M_{(c)_{n+1}}}, \in, V_{n+1})$ satisfies $\mathcal{M}_{(c)_{n+1}}\models \forall c' <_{L[V_{n+1}]} (c)_{n}, \  \mathcal{M}_{c'}\not\cong \mathcal{M}_{(c)_n}$.
    \end{enumerate}

    \begin{claim}\label{claim: complexity-of-C}
        $C$ is $\Pi^1_2$.
    \end{claim}

    \begin{proof}
        Clause 1 is $\Pi^1_2$ by assumption. Clauses 3 and 5 are arithmetical. Clauses 2 and 4 are $\Pi^1_2$ by Lemma \ref{lem: complexity of ip}.
    \end{proof}

    The argument that $C$ is a partial function and has the same projection as $R$ is entirely parallel to the proof of Theorem \ref{thm: FP-in-L}.

\bigskip \noindent   \textbf{The case for $\boldsymbol{n = 3}$}. Let $R\subseteq \ww\times\ww$ be the ${\Pi}^1_3$ relation. Assume $$R(x,w)\iff \forall u S(x,w,u) \iff \forall u \exists vT(x,w,u,v)$$
    where $S\subseteq\ww\times \ww\times\ww$ is ${\Sigma}^1_2$ and $T\subseteq \ww\times \ww\times \ww$ is ${\Pi}^1_1$.

    Consider the following relation: $C(x,\langle w,u,v,c\rangle)$ iff all of the following hold: \begin{enumerate}
        \item $R(x,w)$ and $\forall w'<_{L[U]}w \: \exists i\in \omega \:\neg S(x,w',(u)_i)$.
        \item $\mathcal{M}_{(c)_0}$ is the $<_{ip}$-least countable iterable premouse containing $x,w,u,v$.
        \item $\mathcal{M}_{(c)_0} = ({M_{(c)_0}},\in,V)$ satisfies: \begin{itemize}
            \item[3.1] $\forall w'<_{L[V]}w,\exists i\in \omega,\neg S(x,w',(u)_i)$
            \item[3.2] $\forall u'<_{L[V]}u,\exists w'<_{L[V]} w, \forall i ,\exists j, T(x,w',(u')_i, (v)_j)$.
            \item[3.3] $\forall v' <_{L[V]}v ,\exists u'<_{L[V]}u,\forall w'<_{L[V]} w,\exists i, \forall j, \neg T(x,w',(u')_i, (v')_j)$
        \end{itemize}
        \item For all $n$, $\mathcal{M}_{(c)_{n+1}}$ is the $<_{ip}$-least countable iterable premouse containing $(c)_n$.
        \item For all $n$, $\mathcal{M}_{(c)_{n+1}}$ satisfies the statement that ``$(c)_n$ is the least code of the structure it codes, as ordered by its canonical order''.
    \end{enumerate}

    The fact that $C$ is a $\Pi^1_3$ predicate follows the same reasoning as Claim \ref{claim: complexity-of-C}, replacing the sentence in Clause 1 by the following: ``$\forall w'<_{L[U]}w \: \exists i\in \omega \: \neg S(x,w',(u)_i)$'' which is easily seen to be $\Pi^1_3$.

\bigskip
    As before, we now show that $$\exists w R(x,w) \iff \exists ! w \exists ! u \exists ! v \exists ! c \: (C(x,\langle w,u,v,c\rangle)).$$

    For the non-trivial direction, suppose  $\exists w R(x,w)$, let $w$ be the $<_{L[U]}$-least such $w$, let $u$ be  $<_{L[U]}$-least s.t. $\forall w'<_{L[U]}w \:\exists i\in \omega \:\neg S(x,w',(u)_i)$, and let $v$ be  $<_{L[U]}$-least s.t. $\forall u'<_{L[U]}u \:\exists w'<_{L[U]} w \: \forall i  \: \exists j \: T(x,w',(u')_i, (v)_j)$.\footnote{This is analogous to Clauses 3.1, 3.2, with the interpreted order $<_{L[V]}$ replaced by the actual order $<_{L[U]}$.} Let $c_0$ be the $<_{L[U]}$-least element satisfying Clause 2 for the chosen $w,u,v$ and recursively find $c_{n+1}$ satisfying Clause 4 for $c_n$. Let $c$ be such that $(c)_n = c_n$.

    Thus Clauses 1, 2 and 4 will be true by construction. Showing that Clause 5 is true is the  same argument as Theorem \ref{thm: FP-in-L}. It is left to verify Clause 3. Notice all three sub-statements 3.1, 3.2, 3.3 are true in $L[U]$ by construction, with $<_{L[V]}$ replaced by the canonical order $<_{L[U]}$. This implies that  3.1, 3.2 and 3.3 are true in $\mathcal{M}_{(c)_0}$, as all three predicates $\neg S, T,\neg T$, being $\Pi^1_2, \Pi^1_1$ and $\Sigma^1_1$ respectively, are downward absolute by Mostowski absoluteness, and $\mathcal{M}_{(c)_0}$ and $L[U]$ agree on the relevant reals and their order. As an example, we prove clause 3.1 in detail and the other two are similar.
    
  Let $w'\in \mathcal{M}_{(c)_0}$ be such that $w'<_{L[V]}w$, we thus have $w'<_{L[U]}w$. As in the construction, we chose $w$ to be the minimal witness that satisfies $R(x,w)$, and we chose $u$ satisfying $\forall w'<_{L[U]}w \: \exists i\in \omega \: \neg S(x,w',(u)_i)$. Then we have $\exists i\in \omega \: \neg S(x,w',(u)_i)$. This is $\Pi^1_2$ and thus passes down to $\mathcal{M}_{(c)_0}$ by Mostowski absoluteness, showing that $\mathcal{M}_{(c)_0}\models \exists i\in \omega \:\neg S(x,w',(u)_i)$.

    \

    Finally, we show that $w,u,v,c$  are unique. Suppose $C(x,\langle w,u,v,c\rangle)$ and $C(x,\langle w',u',v',c'\rangle)$ hold. 
    Without loss of generality suppose that $w'\leq_{L[U]}w$. If $w'<_{L[U]}w$ then $\exists i \: \neg S(x,w',(u)_i)$ by Clause 1, but we also have $R(x,w')$ by Clause 1 of the definition of $C(x,\langle w',u',v',c'\rangle)$, thus $w' = w$. The minimality argument for $u,v$ works because premice agree on the relevant reals and their order, and the predicates $T,\neg T$ are absolute between them. To see that $c$ is unique, use  the same argument as in Theorem \ref{thm: FP-in-L}.

    This completes the proof.
\end{proof}

By constructing along the $\boldsymbol{\Sigma}^1_3$-good wellorder in $L[U]$, it is easy to obtain $\boldsymbol{\Sigma}^1_3$ combinatorial families in $L[U]$. Combined with the complexity reduction results in the third section, we conclude the following:

\begin{theorem}
    Assume $V = L[U]$. Then there is a $\boldsymbol{\Pi}^1_2$ witness to the following combinatorial families:
    \begin{itemize}
        \item A set without the perfect set property.\footnote{This was known to Silver and Solovay, see  \cite[Theorem 20.21]{Kanamori}.}
        \item A maximal almost disjoint  family (MAD).
        \item A maximal independent family (MIF).
        \item A maximal ideal independent Family.
        \item Maximal linear towers and maximal towers.
        \item Hausdorff Gaps
    \end{itemize}
    Moreover, the MAD family, maximal independent family and maximal ideal independent family can be assumed to be Cohen-indestructible.
\end{theorem}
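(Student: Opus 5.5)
The plan is to combine Theorem \ref{thm: FP-in-L[U]} (which gives $\boldsymbol{\Pi}^1_2$-$\FP$ in $L[U]$) with the complexity reduction theorems of Section \ref{Complexity Reduction} at level $n=2$. That reduces everything to producing $\boldsymbol{\Sigma}^1_3$ witnesses in $L[U]$. For this I will use Theorem \ref{thm: complexity in L[U]}: $<_{L[U]}\upharpoonright 2^\omega$ is a $\Delta^1_3$, $\Sigma^1_3$-good well-order of type $\omega_1$. Each family is built by a transfinite recursion of length $\omega_1$ along this order. For instance, for MAD families, enumerate $[\omega]^\omega$ in $<_{L[U]}$-order as $(y_\alpha)_{\alpha<\omega_1}$. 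At stage $\alpha$, if $y_\alpha$ is almost disjoint from all previously chosen elements, add the $<_{L[U]}$-least $x$ that is almost disjoint from all earlier elements and meets $y_\alpha$ infinitely. This $x$ exists because the earlier family is countable.

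To get the definability, I follow the standard argument for $L$. A real $x$ is in the family iff there is a real $z$ coding the countable sequence of the construction up to $x$, i.e.\ coding the $<_{L[U]}$-initial segment below the relevant $y_\alpha$ together with the choices made so far, such that every step of the code is correct. Correctness of each step involves only quantification bounded by $<_{L[U]}$ and arithmetical or $\Sigma^1_1$ matrices, so by goodness it is $\Sigma^1_3$. The outer existential over $z$ keeps it $\Sigma^1_3$. The same template handles MIFs, maximal ideal independent families, maximal (linear) towers, and Hausdorff gaps (two $\omega_1$-sequences built simultaneously, each $\Sigma^1_3$). Maximality at limit stages of towers comes from diagonalizing against every real in turn; in the tower case one takes pseudo-intersections, which exist for countable decreasing sequences.

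For the perfect set property, I use the Folklore theorem in Section 3.1 at $n=2$. In $L[U]$ the set of reals is $\Sigma^1_3$ and uncountable, and it can be made into a counterexample: for instance, the set of reals $x$ that are $<_{L[U]}$-least codes of their own iterable premouse level is uncountable and contains no perfect set, by the usual argument. Alternatively I could just cite Silver--Solovay as in the footnote. By contraposition, $\boldsymbol{\Pi}^1_2$-$\FP$ then yields a $\boldsymbol{\Pi}^1_2$ set without the $\PSP$.

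For Cohen-indestructibility, I use the corollary following Corollary \ref{cor: forcing-preservation}. $L[U]$ has a measurable cardinal, so Cohen forcing is $\boldsymbol{\Sigma}^1_3$-generic absolute, and I only need a $\boldsymbol{\Sigma}^1_3$ Cohen-indestructible MAD family (respectively MIF, ideal independent family). I build it along $<_{L[U]}$ exactly as in $L$: at stage $\alpha$, additionally diagonalize against the $\alpha$-th pair (Cohen name, condition), coded as a real, using the known combinatorial characterization of Cohen-indestructibility. The MAD case is Kurili\'c's characterization, and there are analogous ones for MIF and ideal independent families. This adds another goodness-bounded clause and stays $\Sigma^1_3$.

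The main obstacle I expect is this indestructible construction and its definability. The combinatorial diagonalization step is routine, but I must check that the indestructibility requirement is expressible with only $<_{L[U]}$-bounded quantifiers over a $\boldsymbol{\Sigma}^1_3$ matrix. I would first try to copy the $L$ constructions from the literature verbatim, replacing $\Sigma^1_2$-goodness with $\Sigma^1_3$-goodness.
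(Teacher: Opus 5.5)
Up to the Cohen-indestructibility clause, your argument is the paper's. You combine $\boldsymbol{\Pi}^1_2$-$\FP$ in $L[U]$ (Theorem \ref{thm: FP-in-L[U]}) with the reductions of Section \ref{Complexity Reduction} at $n=2$. You apply these to $\boldsymbol{\Sigma}^1_3$ witnesses built by recursion along the $\Sigma^1_3$-good order $<_{L[U]}$. You use Martin--Solovay to make Cohen forcing $\boldsymbol{\Sigma}^1_3$-generic absolute.

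Two small repairs are needed. First, seed the MAD recursion with a fixed partition of $\omega$ into infinitely many infinite sets. As written, the first element chosen could be $\omega$ itself, and the construction would then stay at the finite, hence non-MAD, family $\{\omega\}$. Second, for the perfect set property, drop the remark about the set of all reals, since it contains perfect sets. Either cite Silver--Solovay, or build a Bernstein-type set by recursion along $<_{L[U]}$.

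The genuine gap is your claim that analogues of Kurili\'c's characterization exist for MIFs and maximal ideal independent families. No such family can be Cohen-indestructible at all. A Cohen real $c$ over $V$ splits every infinite $S\subseteq\omega$ in $V$: both $c\cap S$ and $S\setminus c$ are infinite.

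\emph{Independent families.} Let $\mathcal{A}\in V$ be a MIF. Every Boolean combination involving $c$ has the form $c\cap S$ or $S\setminus c$, where $S\in V$ is infinite. This follows from independence of $\mathcal{A}$, using an extra element of $\mathcal{A}$ when $c$ is the only positive or only negative term. Hence $\mathcal{A}\cup\{c\}$ is independent in $V[c]$.

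\emph{Ideal independent families.} Let $\mathcal{I}\in V$ be an infinite ideal independent family and $F\subseteq\mathcal{I}$ finite. The set $\bigcup F$ is coinfinite, because some $a\in\mathcal{I}\setminus F$ satisfies $a\not\subseteq^*\bigcup F$; hence $c\not\subseteq^*\bigcup F$. Moreover, for $a\notin F$ the set $a\setminus\bigcup F$ is infinite, so $a\not\subseteq^* c\cup\bigcup F$. Hence $\mathcal{I}\cup\{c\}$ is ideal independent.

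So your diagonalization step cannot succeed for these two families. This is a defect of the claim itself, since the paper asserts the same thing, and not something a better construction would fix. Only the MAD part of the ``Moreover'' clause is provable, and for it your name-by-name diagonalization along $<_{L[U]}$ together with Martin--Solovay works as you describe.
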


\section{Open Questions}

In \cite{MILLER1989}, Miller proved that in $L$ there is a $\Pi^1_1$ instance of a \emph{Hamel Basis}, thus it is suspected that the status of a Hamel Basis is similar to the combinatorial families discussed in this article. However, no complexity reduction proofs are known in the literature. Thus the following remains open:

\begin{question}
    Assume $\PI^1_n$-$\FP$. Is it the case that if there is a $\boldsymbol{\Sigma}^1_{n+1}$ Hamel Basis, then there is a $\boldsymbol{\Pi}^1_n$ Hamel Basis?
\end{question}

Of course, the question remains whether we can find complexity reduction proofs without assuming \emph{Functional Projection} or any additional assumptions beyond $\mathsf{ZFC}$. Thus far, the only known instance is Schilhan's result for ultrafilter basis  \cite{Schilhan22}:

\begin{theorem}[Schilhan]
    If there is a $\boldsymbol{\Sigma}^1_{n+1}$ ultrafilter basis then there is a $\boldsymbol{\Pi}^1_n$ ultrafilter basis. 
\end{theorem}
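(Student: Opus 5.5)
Fix a $\boldsymbol{\Sigma}^1_{n+1}$ ultrafilter basis $\mathcal{B}\subseteq[\omega]^\omega$, meaning a family such that $\{y : \exists x\in\mathcal{B}\ x\subseteq y\}$ is a nonprincipal ultrafilter. The statement carries no Functional Projection hypothesis, so the plan is to replace the $\PI^1_n$ function $Q$ of Section~\ref{Complexity Reduction} by a coding trick that is special to ultrafilter bases. The key point is that an ultrafilter basis is upward flexible: if $\mathcal{B}$ is a basis and for every $x\in\mathcal{B}$ we pick any $x'\subseteq x$ with $x'$ in the generated ultrafilter $\mathcal{U}$, then $\{x'\}$ is again a basis. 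So we are free to shrink elements so that they encode their own witnesses.

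First, write $x\in\mathcal{B}\iff\exists w\,(x,w)\in P$ with $P$ a $\boldsymbol{\Pi}^1_n$ relation. Partition $\omega$ into consecutive finite intervals $I_k$ of increasing size. For $x\in[\omega]^\omega$ and $w\in 2^\omega$, define $h(x,w)\subseteq x$ by thinning $x$ so that the pattern of which intervals $h(x,w)$ meets records $w$ in a Borel-decodable, injective way. The natural attempt is to split $x$ into ``even'' and ``odd'' pieces according to $w$, as in the $g_0,g_1$ maps of the MAD proof.

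Next, set
\[
\mathcal{B}'=\{h(x,w) : (x,w)\in P,\ h(x,w)\in\mathcal{U}\}.
\]
Because $h$ is a Borel injection, Lemma~\ref{lem: Borel-injection} shows that $h[P]$ is $\boldsymbol{\Pi}^1_n$. Moreover, membership $z\in\mathcal{B}'$ can be decided from $z$ alone: decode $(x,w)=h^{-1}(z)$ Borel-ly, check $(x,w)\in P$ (a $\boldsymbol{\Pi}^1_n$ condition), and check that $z\in\mathcal{U}$. No uniqueness of the witness $w$ is needed, which is exactly why Functional Projection becomes unnecessary. Every $x\in\mathcal{B}$ has at least one witness $w$, and we need at least one of the resulting pieces to lie in $\mathcal{U}$. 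To arrange this, take several complementary thinnings $h_0,\dots,h_{k-1}$ whose union is $x$; since $\mathcal{U}$ is an ultrafilter, exactly one of them lies in $\mathcal{U}$.

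The main obstacle is the condition $h(x,w)\in\mathcal{U}$, since ``$z\in\mathcal{U}$'' is $\boldsymbol{\Sigma}^1_{n+1}$ and not $\boldsymbol{\Pi}^1_n$. I would try to remove it by using instead the condition $\forall (x',w')\in P\ |h_i(x,w)\cap h_j(x',w')|=\omega$ for the relevant indices. But this is $\boldsymbol{\Pi}^1_{n+1}$, so it also fails. Hence a genuinely new idea is required. Following Schilhan, I would encode $w$ into $x$ in such a way that the complementary piece is forced out of $\mathcal{U}$, using the fact that an ultrafilter cannot contain two disjoint sets. Concretely, the idea is to choose the split of $x$ by a Borel function of $x$ alone, so that one piece is canonical, and then append the code of $w$ on a $\mathcal{U}$-small region. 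I expect this step, making the $\mathcal{U}$-membership condition definable at level $\boldsymbol{\Pi}^1_n$, to be the crux, and I am not certain that the plan closes without importing Schilhan's specific construction.
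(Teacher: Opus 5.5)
The paper gives no proof of this theorem; it only cites Schilhan. So I judge your plan on its own, and there is a genuine gap exactly where you locate it. Your family $\mathcal{B}'=\{h(x,w):(x,w)\in P,\ h(x,w)\in\mathcal{U}\}$ equals $h[P]\cap\mathcal{U}$. The factor $\mathcal{U}$ is only known to be $\boldsymbol{\Delta}^1_{n+1}$: it is $\boldsymbol{\Sigma}^1_{n+1}$, and also $\boldsymbol{\Pi}^1_{n+1}$ since $z\in\mathcal{U}\iff\omega\setminus z\notin\mathcal{U}$. So nothing you have said makes $\mathcal{B}'$ $\boldsymbol{\Pi}^1_n$.

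Your attempted patches do not help:
\begin{itemize}
\item Splitting $x$ into complementary pieces only guarantees that one piece lies in $\mathcal{U}$. You still have to say which one. You cannot keep all pieces, because two disjoint sets in $\mathcal{B}'$ would stop its upward closure from being a filter.
\item Your final idea (canonical split plus a code on a $\mathcal{U}$-small region) hits the same wall. A Borel split of $x$ does not tell you which piece is in $\mathcal{U}$. Certifying that the coding region is $\mathcal{U}$-small needs yet another witness, which would itself have to be stored in, and decoded from, a subset of an arbitrary infinite set.
\item Independently, you never construct the Borel injection $h$ with $h(x,w)\subseteq x$, and it is not clear one exists. $h(x,w)$ would have to encode $x$ itself as well as $w$. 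The maps $g_0,g_1$ in the MAD proof are injective only because they keep an untouched copy $\{0\}\times x$ in a separate coordinate, which a subset of $x$ cannot do.
\end{itemize}

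What is missing is a way for a base element to carry its own $\boldsymbol{\Pi}^1_n$ certificate of membership that survives shrinking. One way to get this is to change the ultrafilter.
\begin{itemize}
\item Let $\mathcal{W}=\mathcal{U}\otimes\mathcal{U}$ on $\omega\times\omega$: $Y\in\mathcal{W}$ iff $\{m:Y_m\in\mathcal{U}\}\in\mathcal{U}$, where $Y_m=\{k:(m,k)\in Y\}$.
\item Since $Y\in\mathcal{W}\iff\exists x\,(x\in\mathcal{U}\wedge\forall m\in x\ Y_m\in\mathcal{U})$, $\mathcal{W}$ is $\boldsymbol{\Sigma}^1_{n+1}$. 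So $\mathcal{W}=p[A]$ for some $\boldsymbol{\Pi}^1_n$ set $A$.
\item Because $\mathcal{U}$ is nonprincipal, if $Z\subseteq Y\in\mathcal{W}$ and every $Y_m\setminus Z_m$ is finite, then $Z\in\mathcal{W}$.
\item Such columnwise finite deletions can write an arbitrary real into $Z$. For example, let bit $j$ be the parity of $|\{k\in Z_{m_j}:k<\min Z_{m_{j+1}}\}|$, where $m_0<m_1<\cdots$ enumerate the nonempty columns of $Z$. This gives a Borel decoding $D$.
\end{itemize}
Then
\[
\mathcal{B}'=\{Z: D(Z)=(Y,w)\in A,\ Z\subseteq Y,\ \forall m\ |Y_m\setminus Z_m|<\omega\}
\]
is $\boldsymbol{\Pi}^1_n$ and contained in $\mathcal{W}$. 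It is also cofinal in $\mathcal{W}$. Given $Y\in\mathcal{W}$, restrict it to the columns lying in $\mathcal{U}$ and pick $w$ with $(Y,w)\in A$. Then delete finitely many points from each column so that $Z$ spells out a code of $(Y,w)$: push $\min Z_{m_{j+1}}$ high enough, and drop one non-minimal point of column $m_j$ below it if the parity is wrong. Transporting along a recursive bijection $\omega\times\omega\to\omega$ finishes the argument.

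As you anticipated, no uniqueness of witnesses is needed, because each $Z$ determines its own certificate via $D$. That is the role you wanted $h$ to play, but it only becomes possible after passing to $\mathcal{W}$.
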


Whether the same holds true for the other combinatorial families discussed in this article remains open:

\begin{question}
    Can the complexity reductions   for the combinatorial families discussed in this article be carried out without assuming  \emph{Functional Projection}?

    Alternatively, is it consistent with $\mathsf{ZFC}$ that there is a  $\boldsymbol{\Sigma}^1_{n+1}$ combinatorial family but no $\boldsymbol{\Pi}^1_n$ combinatorial family, for any or the examples?
    
    If not, what is special about ultrafilter basis?
\end{question}

\bibliography{bib}
\bibliographystyle{abbrv}
\end{document}